\documentclass[11pt]{article}
\usepackage{geometry}

\setlength{\voffset}{-0in}%-0.7
\setlength{\headsep}{10pt}%5
\setlength{\paperheight}{794pt}
\setlength{\textheight}{650pt}%730
\setlength{\textwidth}{430pt}%{530pt}
\setlength{\hoffset}{-0.1in}%-0.6

\usepackage{amsmath}
\usepackage{amssymb,amsthm}
\usepackage{hyperref}
\usepackage{enumerate}
\usepackage{ulem}
\usepackage[arrow, matrix, curve]{xy}

%\usepackage{physics}  %%%%%         clash with MacroBook

%\usepackage{euscript}
%\usepackage{epic,eepic}
%\usepackage{graphicx}
%\usepackage{auto-pst-pdf}

%-------------------------------------------------------------------
%-------------------------------------------------------------------
%\RequirePackage[colorlinks,hyperindex]{hyperref}
%\hypersetup{linktocpage=true,citecolor=dblue,linkcolor=dgreen}
%---------------------------------------------------------------
%\tolerance 2500
%------------------------------------------------------------------
% macros, définitions et nouvelles commandes perso
%\makeatletter
%\def\@makechapterhead#1{
%\vspace*{80\p@}
%{\parindent \z@ \raggedright \normalfont
%\ifnum \c@secnumdepth >\m@ne
%\if@mainmatter
%\huge\bfseries \@chapapp\space \thechapter
%\par\nobreak
%\vskip 20\p@
%\fi
%\fi
%\interlinepenalty\@M
%\Huge \bfseries #1\par\nobreak
%\vskip 40\p@
%}}
%\makeatother

%\usepackage{kpfonts}

%\newcommand{\diag}{\operatorname{diag}}

\newcommand{\eni}{\begin{equation}}
\newcommand{\enf}{\end{equation}}
\newcommand{\be}{\begin{equation}}
\newcommand{\ee}{\end{equation}}

\newcommand{\br}{\begin{rem}}
\newcommand{\er}{\end{rem}}
\newcommand{\bex}{\begin{ex}}
\newcommand{\eex}{\end{ex}}

\newcommand{\bc}{\begin{center}}
\newcommand{\ec}{\end{center}}
\newcommand{\bn}{\begin{enumerate}}
\newcommand{\en}{\end{enumerate}}
\newcommand{\bi}{\begin{itemize}}
\newcommand{\ei}{\end{itemize}}
\newcommand{\mfi}{\begin{eqnarray*}}
\newcommand{\mff}{\end{eqnarray*}}
\newcommand{\mfni}{\begin{eqnarray}}
\newcommand{\mfnf}{\end{eqnarray}}

\newcommand{\bt}{\begin{thm}}
\newcommand{\et}{\end{thm}}
\newcommand{\bl}{\begin{lem}}
\newcommand{\el}{\end{lem}}
\newcommand{\bd}{\begin{defi}}
\newcommand{\ed}{\end{defi}}
\newcommand{\bp}{\begin{proof}}
\newcommand{\ep}{\end{proof}}
\newcommand{\bq}{\begin{que}}
\newcommand{\eq}{\end{que}}

\newtheorem{thm}{Theorem}
\newtheorem{defi}{Definition}
\newtheorem{lem}{Lemma}
\newtheorem{prop}{Proposition}

\newtheorem{rem}{Remark}
\newtheorem{ex}{Example}
\newtheorem{ass}{Assumption}
\newtheorem{que}{Question}

\providecommand{\nor}[1]{\left\lVert {#1} \right\rVert}

\providecommand{\abs}[1]{\lvert{#1}\rvert}
\providecommand{\set}[1]{\left\{#1\right\}}

\providecommand{\supp}[1]{\text{supp}( {#1} )}

\providecommand{\scal}[2]{\left\langle{#1},{#2}\right\rangle}

% Numbers
\newcommand{\R}{\mathbb R}
\newcommand{\C}{\mathbb C}

\newcommand{\N}{\mathbb N}

\newcommand{\hh}{\mathcal H}

\newcommand{\la}{\lambda}

\newcommand{\kk}{k}

\include{macros_nicole}

%------------------------------------------------------------------
%\numberwithin
\numberwithin{equation}{section}

\title{Reproducing kernel Hilbert spaces on  manifolds: \\ Sobolev and Diffusion spaces}

\author{Ernesto De Vito\footnote{DIMA, Universita' degli Studi di Genova,,
    {\it devito@dima.unige.it} } \;,  
 Nicole M\"ucke\footnote{Institute for Stochastics and Applications, University of Stuttgart, {\it nicole.muecke@mathematik.uni-stuttgart.de} } \;, 
and \;Lorenzo Rosasco\footnote{LCSL, Universita' degli Studi di Genova, Massachusetts Institute of Technology \& Istituto Italiano di Tecnologia,  {\it lrosasco@mit.edu}} }
%\date{}

%\author{ E. De Vito, N. M\"ucke, L. Rosasco}

\begin{document}
\maketitle

\begin{abstract}
We study  reproducing kernel Hilbert spaces
 (RKHS) on a Riemannian manifold. In particular, we discuss
under which condition Sobolev spaces are RKHS and characterize their reproducing kernels.  Further, 
we introduce and discuss a class of smoother  RKHS that we call diffusion spaces. We illustrate the general results with a number of detailed examples. 
\end{abstract}

%\tableofcontents

%\erne{} Ernesto comments in blue
%\sout{} cancellare

%\newpage

\section{Introduction}

Among different notions of function spaces, reproducing kernel Hilbert
spaces (RKHS) play a central role in a number of diverse contexts,
including stochastic analysis \cite{Bog15}- where they are also
known as Cameron-Martin spaces \cite{bog98}, harmonic analysis
\cite{berg84}, \cite{Dau92}, physics \cite{Ali12}, numerical analysis
\cite{Wen10}- where they are also known as native spaces, statistics
\cite{Ber_book}, and machine learning {\cite{cucsma02,stechr2008},} to name a few.
RKHS are Hilbert spaces of functions with continuous evaluation
functionals, a property that naturally yields a number of implications
and characterizations, where positive kernels and corresponding
integral operators are key objects.  Among other references { \cite{aro50}
is a classic.}  Examples of RKHS and kernels abound and include functions defined in Euclidean spaces \cite{Ber_book} but
also for functions on less structured space, for example discrete
space \cite{Sha04}. In many modern applications it is relevant to
consider functions depending on a large, if not huge, number of
variables potentially related to each others. Considering functions
defined on manifolds provide a natural way to formalize this idea. The
goal of this paper is to describe in a self contained manner a number of
examples of RKHS on  Riemannian
  manifolds with bounded geometry \cite{CGT,trib92}. As we show,
if the smoothness index is large enough,  Sobolev spaces 
provide a primary example of RKHS.  We observe that in the
  literature there are many different definitions of Sobolev spaces
  and the technical assumption that the manifold has bounded geometry,
  see item~\ref{def_bounded}) of Proposition~\ref{prop_M}, is needed
  to ensure that the various approaches are equivalent to each other.
  Examples of manifolds of bounded geometry are: compact Riemannian
  manifolds and Lie groups with an invariant Riemannian
  structure. In the paper,  after collecting in a
unified way a number of definitions and results on Sobolev spaces, we
show under which condition they are RKHS and characterize the
corresponding kernels and integral operators using spectral
theory. Further, we introduce a class of functions spaces, called
diffusion spaces, defined by the heat kernel which naturally
generalize the RKHS with Gaussian kernels in a Euclidean setting.
Finally, we illustrate the general discussion presenting a number of detailed
examples.

While connections between Sobolev spaces, differential operators and
RKHS are well known in the Euclidean setting, here we present a self
contained { study} of analogous connections for Riemannian manifolds.  By
collecting a number of results in unified a way we think our study can
be useful for researchers interested in the topic. 

 The rest of the
paper is organized as follows. In Section~\ref{sec:notation} we set
the notation and introduce basic concepts and assumptions. In
Section~\ref{sec:sobolev} we recall different notions and results on
Sobolev spaces of functions on a Riemannian manifold. In
Section~\ref{sec:diffusion} we introduce the concept of diffusion
spaces. In Section~\ref{sec:compact} we specialized the previous
definitions and results to the case of compact manifolds where a
number of simplifications occur. Finally, in Section~\ref{sec:RKHS} we
provide an RKHS perspective on the function spaces previously
introduce and illustrate them with a number of examples in
Section~\ref{sec:examples}.

\section{Notation and assumptions}\label{sec:notation}

In this section we fix the notation and state the main assumptions. We
refer to Appendix~\ref{review} for definitions and results on
Riemannian geometry.  In this paper, we consider the class
of Riemannian manifolds satisfying the following assumption.
\begin{ass}\label{ass_M}
Let $M$ be an $n$-dimensional manifold, which is
connected, complete and  with bounded geometry.
\end{ass}
The manifold $M$ has {\em bounded geometry} if the estimates~$\eqref{eq:18}$ and
$\eqref{eq:27}$ given in the Appendix hold true.  
We denote by $g$ and $\nabla$ the Riemannian metric and the
corresponding Riemannian connection, respectively.  The Riemannian
metric $g$ induces a distance on $M$ and, by Assumption  $M$ becomes a complete
metric space, see item~\ref{distance}) of Prop.~\ref{prop_M}. We
denote by $B(m,r)$ the ball of center $m\in M$ and radius $r>0$. 

In many examples, $M$ is an embedded submanifold of $\R^d$ with the
induced Riemannian structure. In Appendix~\ref{review} we recall
some properties and we provide some explicit formulae for $g$ and $\nabla$.

Some typical examples are:

\begin{example}
The space $\R^n$ with the usual
Riemannian structure induced by the 
Euclidean scalar product satisfies
Assumption~\ref{ass_M}. 
\end{example}

\begin{example}
Any compact connected submanifold of $\R^d$ satisfies
Assumption~\ref{ass_M}. Indeed, the Hopf-Rinow theorem implies that $M$
is complete, see item~\ref{Hopf}) of Prop.\ref{prop_M}, and $M$ has
bounded geometry by the Weierstrass theorem.   
\end{example}

\paragraph{Normal coordinates.} In order to introduce the Sobolev
spaces, one needs a nice family of local charts on $M$,
whose existence is ensured by the
following result.

\begin{thm}\label{partition}
Given $r>0$ small enough, there exists a smooth atlas $\set{U_j,\varphi_j}_{j\in J}$ of $M$ such that
for all $j\in J$
\begin{equation}
    \label{eq:2}
    U_j=B(m_j,r)\subset M \qquad \varphi_j:U_j\to\R^n\quad
    \varphi_j(m)=\exp_{m_j}^{-1}(m), 
  \end{equation}
where $\set{m_j}_{j\in J}$ is  a suitable family of points in $M$. 
Furthermore, there exists a family $\set{\psi_j}_{j\in J}$ of smooth real
functions on $M$ such that
\begin{equation}
  \label{eq:3}
 0\leq \psi_j\leq 1 \qquad \operatorname{supp}{\psi_j
      }\subset U_j \qquad \sum_{j\in J} \psi_j = 1 .
\end{equation}
\end{thm}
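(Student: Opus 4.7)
The plan is to extract the essential input from the bounded geometry hypothesis, namely that the injectivity radius of $M$ is bounded below by some $r_0>0$, one of the equivalent formulations collected in Proposition~\ref{prop_M}. For any $r<r_0$ the exponential map
$\exp_{m}\colon B(0,r)\subset T_{m}M\to B(m,r)\subset M$
is a diffeomorphism for every $m\in M$, so $\exp_{m_j}^{-1}$ is automatically a smooth chart on $B(m_j,r)$ once the centres are chosen; the work is in the choice of centres and the construction of $\{\psi_j\}$.

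First I would build the atlas. Fix $r<r_{0}/4$ and, by a Zorn-type maximality argument, pick $\{m_j\}_{j\in J}\subset M$ maximal with the property that $d(m_i,m_j)\ge r/2$ for $i\neq j$. Maximality forces the balls $B(m_j,r/2)$ to cover $M$, since otherwise any uncovered point could be adjoined to $\{m_j\}$ while preserving $r/2$-separation. In particular $\{U_j=B(m_j,r)\}$ covers, and $\{U_j,\varphi_j=\exp_{m_j}^{-1}\}$ is a smooth atlas of the form \eqref{eq:2}.

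Next I would establish uniform local finiteness of this cover, which is the step where bounded geometry re-enters in an essential way. For any $m\in M$, the balls $B(m_j,r/4)$ indexed by those $j$ with $m_j\in B(m,2r)$ are pairwise disjoint and contained in $B(m,2r+r/4)$. The bounded geometry assumption, via the control on the metric in normal coordinates recorded in \eqref{eq:18}, gives a Bishop-Gromov type volume comparison: the volumes of geodesic balls of a fixed radius are bounded above and below uniformly in the centre. Consequently the number of $m_j$ falling in any ball of radius $2r$ is bounded by a constant $N=N(r)$ independent of the centre, so in particular any point of $M$ lies in at most $N$ of the sets $U_j$.

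Finally I would build the partition of unity. Choose $\chi\in C^{\infty}_{c}(\R^n)$ with $0\le\chi\le 1$, $\chi\equiv 1$ on $B(0,r/2)$, and $\operatorname{supp}\chi\subset B(0,r)$, and set
\begin{equation*}
\tilde\psi_j(m)=\begin{cases}\chi\bigl(\varphi_j(m)\bigr), & m\in U_j,\\ 0, & m\notin U_j,\end{cases}
\qquad
S(m)=\sum_{j\in J}\tilde\psi_j(m).
\end{equation*}
Each $\tilde\psi_j$ is smooth on $M$ because $\chi$ vanishes near $\partial B(0,r)$; the sum defining $S$ is locally finite by the previous step, hence smooth; and $S(m)\ge 1$ because the balls $B(m_j,r/2)$ cover $M$ and on each such ball the corresponding $\tilde\psi_j$ equals $1$. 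Setting $\psi_j=\tilde\psi_j/S$ then yields the functions required by \eqref{eq:3}. The main obstacle I expect is the volume comparison underlying uniform local finiteness: everything else is a Zorn selection and a standard normalized bump-function construction, but turning the abstract bounded geometry hypothesis into a uniform bound on the multiplicity of the cover is the step that genuinely uses both the curvature bounds and the positive injectivity radius at the same time.
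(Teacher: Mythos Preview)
Your argument is correct and is the standard one for manifolds of bounded geometry: maximal $r/2$-separated net, volume comparison for uniform local finiteness, normalized bump functions. One small slip: the ``control on the metric in normal coordinates'' you invoke for the volume comparison is recorded in \eqref{eq:27}, not \eqref{eq:18}; the latter is only the injectivity radius bound, and you need both together to get uniform upper and lower bounds on $\operatorname{vol}(B(m,\rho))$.

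There is nothing to compare your approach against: the paper does not prove Theorem~\ref{partition}. It states the result, adds explanatory comments on the notation, and treats it as standard background (the result is classical for bounded geometry and appears, for instance, in the references by Triebel and Hebey cited elsewhere in the paper). Your write-up would in fact supply a proof the paper omits.
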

We add some comments to explain the statement. Denoted by
  $\operatorname{inj}(m)$  the injectivity radius at $m$,  see
  item~\ref{inj_radious}) of Prop.~\ref{prop_M}, and 
  \[r_M  =\inf_{m\in M} \operatorname{inj}(m),\]
then by~\eqref{eq:18} $r_M>0$ and for any $r<r_M$
Theorem~\ref{partition} holds true.
In~\eqref{eq:2}, the map $\exp_m:T_m(M)\to M$ denotes the exponential
map at $m\in M$. By choosing an orthonormal base,  $T_m(M)$ is
identified with $\R^n$ and, by item~\ref{exp}) of Prop.~\ref{prop_M}),
$\exp_m$ is a diffeomorphism from $B(0,r)\subset\R^n$  onto
$B(m,r)\subset M$. The inverse $\varphi_m=\exp_{m}^{-1}: B(m,r)\to\R^n
$ is called normal coordinates at $m$ since they satisfy~\eqref{eq:30}.  
By definition of an atlas, 
the family $\set{U_j}_j$ is a locally finite open covering on $M$,
Eq.~\eqref{eq:3} states that $\set{\psi_j}_{j\in J}$ is a smooth partition of
unity subordinate to the open covering    $\set{U_j}_{j\in J}$ and
\[\operatorname{supp}{\psi}=\overline{\{m\in M\mid \psi(m)\neq 0\}}\]
 denotes the support of the continuous function $\psi$. 
By our assumption on $M$, the index set $J$ might be chosen countable and we take it finite if 
$M$ is compact.

\paragraph{The volume measure.} The metric $g$ induces a Radon measure
on $M$, which plays the role of the Lebesgue measure of
$\R^d$. Indeed, there exists a unique Radon measure $dm$ on $M$,
called the Riemannian volume measure \cite{sa96}[Chap.1~\S~5.1]
  or \cite{cha84}[Ch. 3, \S~3] such that
\begin{equation}
  \label{eq:29}
 \int_M f(m)  dm = \sum_{j\in J} \int\limits_{ B(0,r)}  \psi( \varphi^{-1}_j(x))\, f(\varphi^{-1}_j(x))
 \sqrt{\det{g(x)}}\, dx,
\end{equation}
where $dx$ is the Lebesgue measure of
$\R^n$ and  $\det{g}$ is the determinant of the metric $g$ in local
coordinates (see item~\ref{exp}) of Prop.~\ref{prop_M}). If $M$ is orientable, it is
  possible to define  a volume form $d\Omega$ such that, if the
  ortonormal normal base of $T_m(M)$ is positive oriented, then
  $d\Omega=\sqrt{\det{g(x)}}dx^1\wedge\ldots\wedge dx^n$,
see~\cite[page
57]{pet16}. 

Given $p\in [1,+\infty)$, we denote by $L^p(M)$ 
the Banach space of (equivalence classes of) $p$-integrable real
functions on $M$ with the corresponding norm $\nor{\cdot}_p$ and, for
$p=2$,  $\scal{\cdot}{\cdot}_2$ is the corresponding scalar product.

\paragraph{The Laplacian.}  The Riemannian connection $\nabla$
defines the Laplacian on the space of smooth functions as 
\[
\Delta f(m)= -\sum_{i=1}^n g( \nabla_{e_i} \nabla f , e_i),
\]
where $\nabla f$ is the unique vector field such that
\[ g( \nabla f , X) =X(f),\]
and $\set{e_i}_{i=1}^n$ is any orthonormal base of $T_m(M)$. In local
coordinates, see \cite[page 57]{pet16},
\begin{equation}
  \label{eq:28}
  \Delta f = - \frac{1}{\sqrt{\det{g}}}
\partial_i \left(  g^{ij} \sqrt{\det{g}} \  \partial_j f\right),
\end{equation}
where $\det{g}$ and $g^{ij}$ are defined in item~\ref{exp}) of
Prop.~\ref{prop_M}. 
We use the Einstein sum convention.

\begin{rem}
We observe that, if the Riemannian metric is modified  by a conformal
 change, the Riemannian volume measure is multiplied by  a smooth
 nowhere vanishing density and this change reflects to the form of the
Laplacian. More explicitly,  if 
\[ \tilde g = \rho^2 g \;, \qquad \rho \in \cC^\infty(M),\,  \rho(m)>0\] 
denotes the conformally equivalent metric, then one obtains that the
associated Riemannian volume measure $d \tilde m = \rho dm$  
and the Laplacian associated to $\tilde \rho$ is given in local coordinates by 
\begin{align}\label{eq:lap}
\tilde \Delta  &=  \frac{1}{\rho^2} \Delta + \frac{1}{\rho \sqrt g} \sum \brac{ \partial_i ,\rho^{-1} }\sqrt g \; g^{ij} \partial_j  \\ 
&= \rho^{-2} \Delta - \rho^{-3} \sum_{ij} (\partial_i \rho) g^{ij} \partial_j  \;, \nonumber 
\end{align}
%using that $g^{ij}$ is the inverse of $g_{ij}$, giving
%\[  \tilde g^{ij} = (\tilde g_{ij})^{-1} = (\rho^2 g_{ij})^{-1} = \rho^{-2} g^{ij}  \;. \]
In~\eqref{eq:lap}, we denote by $\brac{\cdot, \cdot}$  the commutator $\brac{A,B} = AB-BA$.
\end{rem}

The sign convention is such that $\Delta$ is a positive operator
on $L^2(M)$ as stated by the following result,
see~\cite[Thm.~2.4]{str83}.  We denote by ${\cal D}(M)$ the
  space of smooth functions on $M$ with compact support, which is a
  subspace of $L^2(M)$ since compacts sets have finite measure.

\begin{thm}  \label{esssad}
The operator $\Delta: {\cal D}(M)\to L^2(M)$  uniquely extends to a self-adjoint
    unbounded operator on $L^2(M)$ and this extension, denoted again by
    $\Delta$, is a positive operator.
\end{thm}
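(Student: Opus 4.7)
The plan is to verify in turn that (i) $\Delta$ is symmetric and non-negative on $\mathcal{D}(M)$; (ii) $\Delta$ is essentially self-adjoint on $\mathcal{D}(M)$, so its closure is the unique self-adjoint extension; and (iii) this extension inherits non-negativity by continuity.

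For step (i) I would appeal to Green's identity. For $f,g\in \mathcal{D}(M)$, using the local-coordinate expression $\Delta f = -\tfrac{1}{\sqrt{\det g}}\partial_i(g^{ij}\sqrt{\det g}\,\partial_j f)$ from~\eqref{eq:28}, the partition of unity~\eqref{eq:3} and the integration formula~\eqref{eq:29}, integration by parts on $\mathbb{R}^n$ (legitimate because all supports are compact) yields
\[
\langle \Delta f,g\rangle_2 \;=\; \int_M g(\nabla f,\nabla g)\,dm \;=\; \langle f,\Delta g\rangle_2.
\]
Taking $g=f$ gives $\langle \Delta f,f\rangle_2 = \int_M |\nabla f|_g^2\,dm \ge 0$, so $\Delta$ is densely defined, symmetric and non-negative on $\mathcal{D}(M)$.

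Step (ii) is the genuine obstacle. The strategy is to check the standard deficiency criterion $\ker(\Delta^{*}+I)=\{0\}$, which for a non-negative symmetric operator suffices for essential self-adjointness. Suppose $u\in\operatorname{dom}(\Delta^{*})$ solves $\Delta^{*}u+u=0$; by interior elliptic regularity for the uniformly elliptic operator $\Delta$ one has $u\in C^{\infty}(M)$ and the equation holds pointwise. Here one exploits completeness of $M$ (Assumption~\ref{ass_M}): since $(M,d)$ is a complete metric space, closed balls are compact, and by smoothing the distance $d(\cdot,m_0)$ one constructs a sequence $\eta_k\in\mathcal{D}(M)$ with $0\le\eta_k\le 1$, $\eta_k\equiv 1$ on $B(m_0,k)$, $\operatorname{supp}\eta_k\subset B(m_0,2k)$ and $\sup_M |\nabla\eta_k|_g\le C/k$ (the bounded geometry hypothesis gives uniform control on such smoothings). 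Multiplying $\Delta u + u = 0$ by $\eta_k^{2}u$, integrating and using Green's identity once more produces
\[
\int_M \eta_k^{2}\bigl(|\nabla u|_g^{2}+u^{2}\bigr)\,dm \;=\; -2\int_M \eta_k\, u\, g(\nabla\eta_k,\nabla u)\,dm,
\]
and a Cauchy--Schwarz argument using $|\nabla\eta_k|_g\le C/k$ absorbs the right-hand side into the left up to a factor $O(1/k^{2})$. Letting $k\to\infty$ and invoking monotone convergence forces $u\equiv 0$, which is the desired vanishing of the deficiency indices.

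For step (iii), essential self-adjointness yields a unique self-adjoint extension $\overline{\Delta}$, namely the graph closure of $\Delta|_{\mathcal{D}(M)}$; positivity passes to the closure since for any $u\in\operatorname{dom}(\overline{\Delta})$ and approximating sequence $f_n\in\mathcal{D}(M)$ with $f_n\to u$ and $\Delta f_n\to\overline{\Delta}u$ in $L^{2}(M)$ one has $\langle\overline{\Delta}u,u\rangle_2=\lim_n\langle\Delta f_n,f_n\rangle_2\ge 0$. The principal difficulty is really the construction and estimation of the cutoffs $\eta_k$; a more robust and coordinate-free route, which is the one followed in the cited reference~\cite[Thm.~2.4]{str83}, is to invoke Chernoff's theorem on finite propagation speed of the wave equation associated with a symmetric non-negative operator, which automatically delivers essential self-adjointness from completeness of $M$ without the need to smooth the distance function.
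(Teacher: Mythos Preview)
The paper does not actually prove this theorem: it merely cites \cite[Thm.~2.4]{str83}. Your argument is correct and supplies what the paper omits. The route you take is the classical Gaffney cutoff method, whereas Strichartz (as you yourself note in the final paragraph) proceeds via Chernoff's criterion on finite propagation speed for the associated wave equation. Both are standard and both rely only on completeness of $M$; the Gaffney approach is more elementary and hands-on, while the Chernoff route is cleaner and avoids any explicit manipulation of cutoffs.

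One small remark on your step~(ii): the construction of cutoffs $\eta_k$ with $\sup_M|\nabla\eta_k|_g\le C/k$ does \emph{not} require bounded geometry, only completeness. One takes $\eta_k=\phi(d(\cdot,m_0)/k)$ for a fixed $\phi\in C^\infty_c(\mathbb{R})$ with $\phi\equiv 1$ near $0$; since $d(\cdot,m_0)$ is Lipschitz with $|\nabla d|_g\le 1$ a.e., the resulting $\eta_k$ is Lipschitz with compact support and the gradient bound follows directly. The integration-by-parts and absorption argument goes through with Lipschitz cutoffs (or one may smooth afterwards, still without invoking bounded geometry). So your parenthetical appeal to bounded geometry is unnecessary, though harmless since the paper assumes it anyway.
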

\begin{remark}
The assumption that $M$ is complete is crucial for the uniqueness 
  statement of Theorem \ref{esssad}, i.e. to ensure that 
  $\Delta:{\cal D}(M) \to L^2(M) $ is {\em essentially} self-adjoint.
If $M$ is an arbitrary Riemannian manifold, since  $\Delta:{\cal D}(M) \to L^2(M) $ is a
symmetric positive operator, Friedrich's extension theorem  \cite{rs}
always provides a self adjoint extension $\Delta_F$, but for
incomplete manifolds there are many self-adjoint extensions,
corresponding to different boundary conditions, and none of the
  equivalence statements in the definition of Sobolev spaces given
  below in Theorem \ref{sobolev} survives in this case, see
  e.g. \cite{e}. This is one of the main reasons why we stick to
  Assumption \ref{ass_M}.
\end{remark}

Given a Borel function $\Phi:[0,+\infty)\to \R$,  the spectral calculus
allows to define an (unbounded) operator  $\Phi(\Delta)$  acting on
$L^2(M)$ as
\begin{equation}
  \label{eq:4}
  \scal{\Phi(\Delta) f}{g}_2= \int_0^{+\infty} \Phi(\la)
  dP_{f,g}(\la) \;, \qquad f\in\operatorname{dom}\Phi(\Delta),\; g\in L^2(M)
\end{equation}
with domain
\[
\operatorname{dom}\Phi(\Delta)=\set{f\in L^2(M)\mid \int_0^{+\infty} \Phi(\la)^2
  dP_{f,f}(\la)}.
\]
Here,  for all Borel subsets $E\subset[0,+\infty)$, $E\mapsto P(E)$ is the spectral
measure associated with $\Delta$, and $dP_{f,g}(\la)$ denotes integration w.r.
to the complex measure
$P_{f,g}(E)= \scal{P(E) f}{g}_2 $, see  \cite[Chapter XX]{lang93}.

%%%%%%%%%%%%%%%%%%%%%%%%%%%%%%%%%%%%%%%%%%%%%%%%%%%%%%%

\section{Sobolev spaces}\label{sec:sobolev}
A canonical way to define function spaces that encode the geometry of
the underlying manifold is through the notion of Sobolev spaces. In
the literature there are different  approaches. Here we
collect all the equivalent definitions.  Since we are interested in
Hilbert spaces, we state the result for $p=2$, however they hold true
for any power $p\in [1,+\infty)$ with minor modifications. 
 We denote by $\mathcal D'(M)$ the
space of distributions on $M$.  Furthermore,
$\set{U_j,\varphi_j}$ and $\set{\psi_j}_{j\in J}$ are  the atlas and
  the partion unity  given by   Prop.~\ref{partition}.
\begin{subequations}
  \begin{thm}   \label{sobolev}
    Fix $s\in [0,+\infty)$ and let $M$ satisfy Assumption \ref{ass_M}. Then, for any distribution $f\in \mathcal D'(M)$, the following conditions
    are equivalent.
    \begin{enumerate}[a)]
    \item 
      \begin{equation}
        \label{eq:35}
   \nor{f}_{H^s,1}^2 =\sum_j \nor{ \psi_j  f \circ\varphi^{-1}_j }^2_{H^s(\R^n)} <+\infty,
      \end{equation}
where $(\psi_j f) \circ\varphi^{-1}_j$ is regarded as tempered distribution on
$\R^d$, which is zero outside  the ball $B(0,r)$.
\item There exists $g\in L^2(M)$ such that
  \begin{equation}
    \label{eq:36}
       f= (\Id+\Delta)^{-\frac{s}{2}} g \qquad  \nor{f}_{H^s,2} =\nor{g}_2,
  \end{equation}
where $(\Id+\Delta)^{-\frac{s}{2}}$ is the Bessel potential  associated
with the function $\Phi(\la)=(1+\la)^{-s/2}$  by spectral calculus. 
\item The distribution $f$ is in the domain of 
  $\Delta^{s/2}$ and
  \begin{equation}
    \label{eq:37}
    \nor{f}_{H^s,3}^2 =\nor{f}^2_2+ \nor{\Delta^{s/2}f}_2^2,
  \end{equation}
where $\Delta^{s/2}$  is the Riesz potential associated
with the function $\Phi(\la)=\la^{s/2}$  by spectral calculus. 
    \end{enumerate}
If one of the above conditions is satisfied, there exists constants $c_1,c_2,c_3,c_4>0$,
independent of $f$, such that
\[
 c_1  \nor{f}_{H^s,1} \leq c_2\nor{f}_{H^s,2} \leq c_3 \nor{f}_{H^s,3}
 \leq c_4 \nor{f}_{H^s,1} .
\]
If $s\in \N$, then $H^s(M)$ is the completion of the space
\begin{equation}
  \label{eq:38}
  \set{ f\in C^\infty(M) \mid \nor{f}_{H^s,4}^2=\sum_{\ell=0}^s
    \nor{\nabla^\ell f}^2_2 <+\infty},
\end{equation}
with respect to the norm $\nor{\cdot}_{H^s,4}$, which is equivalent to
$\nor{\cdot}_{H^s,1}$, $\nor{\cdot}_{H^s,2}$, $\nor{\cdot}_{H^s,3}$.
\end{thm}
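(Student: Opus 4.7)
The plan is to establish the chain of equivalences (b)$\Leftrightarrow$(c), then (c)$\Leftrightarrow$(d) for integer $s$, then (a)$\Leftrightarrow$(d), and finally extend (a)$\Leftrightarrow$(c) to all real $s\geq 0$ by interpolation. The norm-equivalence constants would then be collected from each step.

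First, (b)$\Leftrightarrow$(c) is a direct consequence of spectral calculus. Since $\Delta$ is a positive self-adjoint operator by Theorem~\ref{esssad}, the Borel functions $\lambda\mapsto (1+\lambda)^{s/2}$ and $\lambda\mapsto (1+\lambda^{s})^{1/2}$ are comparable on $[0,+\infty)$, so $\operatorname{dom}((\Id+\Delta)^{s/2})=\operatorname{dom}(\Delta^{s/2})\cap L^2(M)$ and the two norms are equivalent; in particular $\|(\Id+\Delta)^{s/2}f\|_2$ equals $\|g\|_2$ whenever $f=(\Id+\Delta)^{-s/2}g$, so the scales defined by~\eqref{eq:36} and~\eqref{eq:37} coincide.

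Next, for integer $s$ the equivalence (c)$\Leftrightarrow$(d) is proved by induction. The base case $s=1$ is the Bochner identity $\|\Delta^{1/2}f\|_2^2=\langle\Delta f,f\rangle_2=\|\nabla f\|_2^2$, valid on the core $\mathcal D(M)$ by integration by parts (justified because $M$ is complete, so $\Delta$ is essentially self-adjoint on $\mathcal D(M)$). For higher $s$ one commutes $\Delta$ with $\nabla$ via Weitzenböck-type identities; the commutators involve the Riemann curvature tensor and its covariant derivatives, which by the bounded geometry estimate~\eqref{eq:27} are uniformly bounded, so the resulting lower-order terms are controlled in $L^2$ and do not spoil the equivalence of norms. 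Then (a)$\Leftrightarrow$(d) is obtained through the atlas $\{U_j,\varphi_j\}$ of normal coordinates given by Theorem~\ref{partition}: bounded geometry forces the coordinate representation of $g$ and its derivatives up to any prescribed order to be uniformly bounded in $j$, with $\det g$ uniformly bounded away from zero. Consequently covariant derivatives $\nabla^\ell f$ on a chart correspond to Euclidean partial derivatives $\partial^\alpha (\psi_j f\circ\varphi_j^{-1})$ modulo linear combinations with uniformly bounded coefficients, and the volume form $\sqrt{\det g}\,dx$ is uniformly comparable to the Lebesgue measure on $B(0,r)$. A Leibniz expansion, together with the locally finite property of the covering, yields the equivalence of~\eqref{eq:35} and~\eqref{eq:38}; the absolute constants depend only on the geometric bounds, not on~$j$.

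Finally, for non-integer $s\geq 0$, one regards~\eqref{eq:35} and~\eqref{eq:37} as defining two scales of Banach spaces $H^{s}_{(\mathrm a)}(M)$ and $H^{s}_{(\mathrm c)}(M)$. Both scales agree on integers (with equivalent norms) by the previous steps, and both satisfy the complex interpolation identity $[H^{s_0},H^{s_1}]_\theta=H^{(1-\theta)s_0+\theta s_1}$: for (c) this is immediate from the spectral representation of $(\Id+\Delta)^{s/2}$ as a positive self-adjoint power, while for (a) it follows from the fact that the map $f\mapsto (\psi_j f\circ\varphi_j^{-1})_{j\in J}$ realizes $H^{s}_{(\mathrm a)}(M)$ as a retract of the vector-valued Bessel potential space $\ell^2(J;H^s(\R^n))$, whose interpolation properties reduce to the classical Euclidean result. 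This propagates the equivalence to all $s\in[0,+\infty)$.

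The delicate step is this last interpolation argument. Bounded geometry is used twice: to guarantee that the reconstruction operator associated with the partition of unity $\{\psi_j\}$ is bounded uniformly in $s$ on the vector-valued Bessel scale, and to preserve curvature-dependent commutator estimates uniformly across the manifold. Without the assumption~\eqref{eq:18}--\eqref{eq:27} the chart-by-chart norms would fail to be uniformly comparable to intrinsic norms, and the equivalence would break down, exactly as in the incomplete case where essential self-adjointness of $\Delta$ is lost.
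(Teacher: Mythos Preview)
Your strategy is sound, but the paper's own ``proof'' is of a different character: it consists entirely of citations (Triebel \cite{trib92} for definition~\eqref{eq:35} and its equivalence with~\eqref{eq:36} and~\eqref{eq:38}; Strichartz \cite{str83} for the equivalence of the Bessel and Riesz potential norms~\eqref{eq:36}--\eqref{eq:37}; Aubin \cite{aub98} for the covariant-derivative norm~\eqref{eq:38}), so the theorem is presented as a synthesis of known results rather than something argued from scratch. What you have written is, in effect, an honest unpacking of those references: the spectral-calculus step (b)$\Leftrightarrow$(c) is exactly Strichartz's Theorem~4.4; the Bochner/Weitzenb\"ock approach to (c)$\Leftrightarrow$(d) with curvature bounds is the substance of the classical covariant treatment; and the retract-plus-interpolation argument for (a) is precisely Triebel's method in Chapter~7 of \cite{trib92}, where bounded geometry is indeed the hypothesis ensuring that the reconstruction operator from $\ell^2(J;H^s(\R^n))$ back to $H^s(M)$ is uniformly bounded in $j$ and in $s$. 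Your sketch suppresses nontrivial work---the higher-order commutator estimates in the inductive step are laborious, and the retract construction requires a second family of cut-offs with uniformly controlled derivatives---but the architecture is correct and coincides with what the cited sources actually prove.
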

\end{subequations}
In~\eqref{eq:38}  $\nabla^\ell$ denotes the $\ell$-fold 
composition of the Riemannian connection $\nabla$ considered as map 
 from $ {\cal A}^0(TM)$ to ${\cal A}^1(TM)$, where ${\cal A}^j(TM)$
 for $j \in \N$ denotes the module of $TM$- valued $j-$forms on $M$;
 in particular, for $j=0,1$ as above, and $X,Y$ being smooth vector
 fields on $M$, i.e. sections of $TM$, the contraction of 
$\nabla Y$ with $X$ is denoted by $\nabla_X Y$ and is thought of as the derivative 
of $Y$ in the direction of $X$ (or $X_m$, since the connection is
tensorial w.r. to $X$). By an habitual abuse of notation, connections
might then be composed, yielding a map $\nabla^j:{\cal A}^0(TM) \to
{\cal A}^j(TM)$.

\begin{proof}
Definition~\eqref{eq:35} is given in \cite[page 286]{trib92} where it is
denoted as $F^s_{22}(M)$  (note that $F^s_{22}(\R^n)=H^s(\R^n)$ as shown in
\cite[page 18 and 1.3.3 Eq.~(13)]{trib92}) and $F^s_{22}(M)$ also coincides with
the Besov space $B^s_{22}(M)$, see \cite[Thm. 3.7.1, page
309]{trib92}. Definition~\eqref{eq:36} is given in 
\cite[Def.~4.1]{str83} and the equivalence with
Definition~\eqref{eq:37} is shown in \cite[Thm~4.4]{str83}.
Definition~\eqref{eq:38} is given in \cite[Def.~2.3]{aub98}.  The
equivalence of Definition~\eqref{eq:35} with Definition~\eqref{eq:36}
and Definition~\eqref{eq:38} is given in \cite[page 320]{trib92} (see
\cite[Definition page 319 and Remark 1.4.5/1 page 301]{trib92} for the choice $\rho=1$). 
\end{proof}
\begin{rem}
Since $M$ has bounded geometry, {\rm Thm.}~$3.1$ and {\rm Prop.}~$3.2$ in
{\rm \cite[page 49]{hebey2000nonlinear}} show that in~\eqref{eq:30}
$C^\infty(M)$ can be replaced by $\mathcal D(M)$.
\end{rem}

Based on the above theorem,  we are able to define the Sobolev space
$H^s(M)$.
\begin{defi}
Given $s\in [0,+\infty)$,  let  $H^s(M)$ be the set of distributions
$f\in \mathcal D'(M)$ satisfying    one of the equivalent
conditions~\eqref{eq:35}, \eqref{eq:36} or \eqref{eq:37}. 
The space $H^s(M)$ becomes a Hilbert space with respect to one of
  the  bilinear forms
  \begin{subequations}
    \begin{alignat}{1}
      \scal{f}{g}_{1,s} & = \sum_j \scal{ (\psi_j f)
        \circ\varphi^{-1}_j }{ (\psi_j g)
        \circ\varphi^{-1}_j }_{H^s(\R^n)}      \label{eq:6a} \\
      \scal{f}{g}_{2,s} & =\scal{ (\Id+\Delta)^{\frac{s}{2}}f }{
        (\Id+\Delta)^{\frac{s}{2}} g}_{2} \label{eq:6b} \\
      \scal{f}{g}_{3,s} & = \scal{f}{g}_{2} +
      \scal{\Delta^{\frac{s}{2}} f}{\Delta^{\frac{s}{2}} g}_2 .
    \end{alignat}
  \end{subequations}
\end{defi}
In general, the above equivalent definitions of Sobolev spaces depend on
the metric $g$, however if $M$ is compact it is possible to show that
$H^s(M)$ is independent on the metric \cite[Prop.~2.2]{aub98}.  

 It is interesting to recall
  the following interpolation
result \cite[Theorem7.4.4]{trib92}.  Given $0\leq s_0<s <s_2$, set $r\in (0,1)$ such that
$s=(1-r)s_0+r s_1$, then
  \begin{equation}
H^s(M)=[H^{s_0}(M),H^{s_1}(M)]_{r,2}\label{eq:16}
\end{equation}
where $[H^{s_0},H^{s_1}]_{r,2}$ denotes the interpolation space given
by the real interpolation method, see~\cite{Ben88}.

The fact that  $H^s(M)$ is a reproducing kernel Hilbert space
provided that the smoothness index $s$ is large enough is based on the
following embedding theorem, which needs some care. 

Recall that, given $\sigma>0$, the  H\"older-Zygmund
space is  defined as, \cite[page 314]{trib92},  
\begin{equation}
  \label{eq:40}
\mathcal{CH}^\sigma=\set{ f\in D'(M)\mid \nor{f}_{\mathcal C^\sigma}
  =\sup_{j\in J} \nor{
  (\psi_j f) \circ \varphi^{-1}_j  }_{\mathcal {CH}^\sigma(\R^n)} <+\infty},
\end{equation}
where the notation is as in~\eqref{eq:35} and $\mathcal
{CH}^\sigma(\R^n)=B^s_{\infty,\infty}$ is the classical H\"older-Zygmund space on
$\R^n$, \cite[Section 1.2.2 and Thm. in Section
1.5.1]{trib92}). Furthermore, we denote by $C(M)$ the space of continuous functions
endowed with the topology of compact convergence. 

We are now ready to state the Sobolev embedding theorems. 
\begin{thm}\label{embedding}
Given $s<s'$ and 
\begin{subequations}
  \begin{equation}
    \label{eq:39}
    H^{s'}(M) \hookrightarrow  H^{s}(M). 
  \end{equation}
If $s>n/2$ and $0<\sigma\leq s-n/2$
\begin{equation}
  \label{eq:9}
 H^{s}(M) \hookrightarrow  \mathcal{CH}^{\sigma}(M)  \hookrightarrow C(M).
\end{equation}
\end{subequations}
\end{thm}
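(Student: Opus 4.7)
The plan is to reduce everything to the Euclidean Sobolev embeddings, using the chartwise definition \eqref{eq:35} of $H^{s}(M)$ and \eqref{eq:40} of $\mathcal{CH}^{\sigma}(M)$. The atlas $\set{U_j,\varphi_j}$ and the partition of unity $\set{\psi_j}_{j\in J}$ given by Theorem~\ref{partition} provide the bridge: each $\psi_j f\circ\varphi_j^{-1}$ is a compactly supported tempered distribution on $\R^n$, so the classical embeddings $H^{s'}(\R^n)\hookrightarrow H^{s}(\R^n)$ and $H^{s}(\R^n)\hookrightarrow \mathcal{CH}^{\sigma}(\R^n)$ (valid when $s>n/2$ and $0<\sigma\leq s-n/2$) can be applied locally. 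The key point is that the constants coming from the Euclidean embeddings are uniform in $j$: the charts all live in balls $B(0,r)\subset\R^n$ of the same radius, so the operator norms of $H^{s'}(\R^n)\to H^s(\R^n)$ and $H^s(\R^n)\to \mathcal{CH}^\sigma(\R^n)$ restricted to distributions supported in $\overline{B(0,r)}$ are chart-independent.

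For the first inclusion $H^{s'}(M)\hookrightarrow H^{s}(M)$, the fastest route is actually through definition~\eqref{eq:36}: since the spectrum of $\Delta$ is in $[0,+\infty)$, the multiplier $(1+\lambda)^{-(s'-s)/2}\leq 1$, hence the operator $(\Id+\Delta)^{-(s'-s)/2}$ is a contraction on $L^2(M)$, and writing $(\Id+\Delta)^{-s'/2}=(\Id+\Delta)^{-(s'-s)/2}(\Id+\Delta)^{-s/2}$ immediately yields $\nor{f}_{H^s,2}\leq \nor{f}_{H^{s'},2}$.

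For the second chain $H^{s}(M)\hookrightarrow\mathcal{CH}^{\sigma}(M)$ I would use \eqref{eq:35}: by the Euclidean Sobolev embedding there is $C>0$, independent of $j$, with
\[
\nor{\psi_j f\circ\varphi_j^{-1}}_{\mathcal{CH}^{\sigma}(\R^n)}\leq C\,\nor{\psi_j f\circ\varphi_j^{-1}}_{H^{s}(\R^n)},
\]
so taking the supremum on the left and comparing with the $\ell^2$-sum on the right gives
\[
\nor{f}_{\mathcal C^{\sigma}}=\sup_j \nor{\psi_j f\circ\varphi_j^{-1}}_{\mathcal{CH}^{\sigma}(\R^n)}\leq C\Bigl(\sum_j \nor{\psi_j f\circ\varphi_j^{-1}}_{H^{s}(\R^n)}^2\Bigr)^{1/2}=C\,\nor{f}_{H^{s},1}.
\]
For the final step $\mathcal{CH}^{\sigma}(M)\hookrightarrow C(M)$, each local piece $\psi_j f\circ\varphi_j^{-1}$ is a H\"older continuous, compactly supported function on $\R^n$, so $\psi_j f\in C(M)$ with support in $U_j$, and the locally finite sum $f=\sum_j \psi_j f$ is continuous on $M$. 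The topology of compact convergence is controlled by noting that any compact $K\subset M$ meets only finitely many $U_j$, say at most $N_K$, whence $\sup_{m\in K}|f(m)|\leq \sum_j \sup_{m\in K}|\psi_j(m)f(m)|\leq N_K\,\nor{f}_{\mathcal C^{\sigma}}$.

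The main obstacle, and the reason the bounded geometry hypothesis matters, is to guarantee that the Euclidean constants can be chosen independent of $j$. This uses that the transition maps $\varphi_i\circ\varphi_j^{-1}$ have derivatives bounded uniformly in $i,j$ (a consequence of the uniform curvature and injectivity radius bounds~\eqref{eq:18}--\eqref{eq:27}), so that the $\psi_j$ and the domains $B(0,r)$ behave uniformly under the embedding; without this, the sup over $j$ in the $\mathcal{CH}^\sigma$-norm would swallow unbounded prefactors. Once this uniformity is in hand, the rest is just chasing the equivalence constants $c_1,\dots,c_4$ from Theorem~\ref{sobolev} to pass between the different norms.
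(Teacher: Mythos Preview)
Your argument is correct and, in contrast to the paper, actually gives a proof: the paper simply cites Triebel \cite[Thm.\ page 315, items iii)--iv)]{trib92} (using the identification $H^s(M)=F^s_{22}(M)=B^s_{22}(M)$) and Strichartz \cite[Thm.~4.2]{str83}. Your spectral-calculus proof of \eqref{eq:39} via the contraction $(\Id+\Delta)^{-(s'-s)/2}$ is exactly the content of Strichartz's argument, and your chartwise reduction for \eqref{eq:9} is the mechanism underlying Triebel's proof, so the approaches are the same in substance; you have just unpacked the citations.

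One comment on your closing paragraph: the uniformity in $j$ of the Euclidean embedding constant $C$ in
\[
\nor{\psi_j f\circ\varphi_j^{-1}}_{\mathcal{CH}^{\sigma}(\R^n)}\leq C\,\nor{\psi_j f\circ\varphi_j^{-1}}_{H^{s}(\R^n)}
\]
is automatic---$C$ depends only on $s,\sigma,n$, not on the support of the function---so bounded geometry is not what buys you this. Where bounded geometry \emph{is} essential is one level earlier: it guarantees that the chartwise norms \eqref{eq:35} and \eqref{eq:40} are well-defined (independent of the choice of normal atlas) and equivalent to the intrinsic norms \eqref{eq:36}--\eqref{eq:37}, via the uniform bounds \eqref{eq:27} on the metric in normal coordinates. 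Without that, your chartwise inequalities would hold but would not be comparing the right norms on $M$.
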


\begin{proof}
The proof can be found in \cite[Thm. page 315, item iii) and
iv)]{trib92} taking into account  that $H^s(M)=F^s_{22}(M)=B^s_{22}(M)$. 
The inclusion  $H^{s'}(M) \hookrightarrow H^{s}(M)$ is also proven in
\cite[Thm. 4.2.]{str83}.  
\end{proof}
\begin{rem}
The assumption that $M$ has bounded geometry implies that the
  Ricci tensor $R$ is bounded from
    below, {\em i.e.} there exists a constant $k\in \R$ such that
      \begin{equation}
R(X,X) \geq k g(X,X) \qquad X\in T(M).\label{eq:bounded}
\end{equation}
{If we only assume that the Ricci tensor is bounded from below
  and, for any $n\in\N$, we} define the Sobolev space $H^n(M)$
by~\eqref{eq:38}, then there is the following embedding result.  For
all $s\in\N$ such that $s>n/2+k$,  then   
  \begin{equation}
H^s(M) \hookrightarrow C^k_b(M),\label{eq:7}
\end{equation}
where $C_b^k(M)$ is the space of  $C^k$-functions with bounded derivatives up to 
order $k$ and the embedding is continuous,  see   {\rm \cite[Thm. 2.9
    and Thm. 3.4]{hebey2000nonlinear}} or  {\rm
    \cite[Prop.~3.3]{tay11}} if $M$ is compact. 
See the discussion in~{\rm \cite[Section 1.2.2 and Section 7.5.3]{trib92}} about the
  difference between  the space $C_b^k(M)$ and H\"older-Zygmund space
  $\mathcal{CH}^k(M)$. Note that condition~\eqref{eq:bounded}  is the standard
  assumption for volume comparison theorems as Bishop's Theorem
  \cite{cha84}[Theorem~3.9] and Gromov's Theorem
  \cite{cha84}[Theorem~3.10].

  If $M$ has bounded geometry,~\eqref{eq:7} and~\eqref{eq:39} imply
  for all $s\in \R$ and $k\in \N$ such that $\lfloor s\rfloor>
  k+n/2$, that 
  \[
H^s(M) \hookrightarrow C^k_b(M).
 \]   
%{\bf I did not find a reference proving that the above embedding holds
%under the assumption that  $s>k+n/2$.}
\end{rem}
Finally, if $M$ is compact the following Rellich-Kondrakov theorem
holds true,
\cite[Prop.~3.9]{hebey2000nonlinear}, \cite[Thm.
3.9]{hebey2000nonlinear} and \cite[Thm. 2.34]{aub98}.
\begin{thm}\label{rellich}
Assume that $M$ is compact. For any $0<s<s'$, the embedding
\[   H^{s'}(M) \hookrightarrow H^{s}(M) \]
is compact. Furthermore, if $s>n/2$, the embedding
\[H^{s}(M) \hookrightarrow C(M)=C_b(M)\]
is compact, too. 
\end{thm}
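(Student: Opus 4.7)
The plan is to reduce both compactness statements to their Euclidean counterparts via the localization afforded by the equivalent norm $\nor{\cdot}_{H^s,1}$ from Theorem~\ref{sobolev}. Since $M$ is compact, the atlas $\set{U_j,\varphi_j}_{j\in J}$ and the subordinate partition of unity $\set{\psi_j}_{j\in J}$ provided by Theorem~\ref{partition} can be chosen with $J$ \emph{finite}, and each chart is of the form $U_j=B(m_j,r)$ with $\varphi_j(U_j)\subset \overline{B(0,r)}\subset\R^n$. This finiteness is the crucial ingredient that makes the diagonal extractions below trivial and lets one sum norm estimates without controlling tails.

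For the first embedding, take a bounded sequence $(f_k)_{k\in\N}$ in $H^{s'}(M)$. For each $j\in J$, the localizations $u_{j,k}:=(\psi_j f_k)\circ\varphi_j^{-1}$ form a bounded sequence in $H^{s'}(\R^n)$ supported in the fixed compact set $\overline{B(0,r)}$. The classical Rellich--Kondrakov theorem on $\R^n$, applied to compactly supported Sobolev functions, yields for each $j$ a subsequence converging in $H^{s}(\R^n)$; because $J$ is finite, a single subsequence $(k_\ell)_\ell$ can be extracted along which $u_{j,k_\ell}\to u_j$ in $H^{s}(\R^n)$ for every $j$. Using the equivalent norm from~\eqref{eq:6a}, one checks
\[
\nor{f_{k_\ell}-f_{k_m}}_{1,s}^2 \;=\; \sum_{j\in J}\nor{u_{j,k_\ell}-u_{j,k_m}}_{H^{s}(\R^n)}^2 \;\longrightarrow\; 0
\]
as $\ell,m\to\infty$, so $(f_{k_\ell})$ is Cauchy in $H^{s}(M)$ and hence convergent by completeness.

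For the second embedding, the strategy is to factor through the Hölder--Zygmund scale. By Theorem~\ref{embedding}, for $s>n/2$ we have a continuous embedding $H^s(M)\hookrightarrow \mathcal{CH}^\sigma(M)$ with $0<\sigma\leq s-n/2$, and it suffices to prove that $\mathcal{CH}^\sigma(M)\hookrightarrow C(M)$ is compact. For a bounded sequence $(f_k)$ in $\mathcal{CH}^\sigma(M)$, the localizations $(\psi_j f_k)\circ\varphi_j^{-1}$ are uniformly bounded and uniformly Hölder of exponent $\sigma$ on the compact $\overline{B(0,r)}$, so Arzel\`a--Ascoli applied successively over the finitely many $j$ produces a subsequence along which each localization converges uniformly. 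Summing over $j$ via $f_k=\sum_j\psi_j f_k$ gives uniform convergence on $M$; since $M$ is compact, $C(M)=C_b(M)$ and we are done.

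The main technical point, rather than an obstacle, is keeping bookkeeping of the localizations: one must check that the equivalent norm~\eqref{eq:6a} genuinely controls the $H^s(\R^n)$-norms of the compactly supported pieces, and invoke the Euclidean Rellich--Kondrakov in the form that applies to functions with fixed compact support rather than on a bounded domain with boundary. Both facts are standard and bypass any need for the spectral-theoretic description of $\Delta$, which would otherwise demand discreteness of the spectrum as an independent input.
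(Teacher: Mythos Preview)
Your argument is correct and follows the standard localization route. The paper itself does not give a proof of this theorem; it merely cites \cite[Prop.~3.9, Thm.~3.9]{hebey2000nonlinear} and \cite[Thm.~2.34]{aub98}, so there is no ``paper's proof'' to compare against beyond noting that those references proceed in essentially the same spirit as you do.

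One small point worth making explicit in the second part: in order that boundedness in $\mathcal{CH}^\sigma(\R^n)=B^\sigma_{\infty,\infty}(\R^n)$ genuinely yield a uniform classical H\"older seminorm (so that Arzel\`a--Ascoli applies as stated), you should take $0<\sigma<1$, since for integer $\sigma$ the Zygmund class is strictly larger than $C^\sigma$. As you are free to pick any $0<\sigma\leq s-n/2$, this is a harmless restriction and your argument goes through unchanged.
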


%%%%%%%%%%%%%%%%%%%%%%%%%%%%%%%%%%%%%%%%%%%%%%%%%%%%%%%%%5

\section{Diffusion spaces}\label{sec:diffusion}

We introduce a class of functions that we call diffusion spaces, inspired by the line of work on diffusion geometry in machine learning and harmonic analysis, see e.g. \cite{CoiLafLee05}. 
The idea is to  encode the geometry of $M$ into smooth function
spaces  by  means of the heat kernel, which plays a role analogous to the 	
Gaussian kernel in  $M=\R^d$. We first review the main properties of
the heat kernel and then we introduce the corresponding diffusion spaces. 

For all $t>0$, denote by ${\mathrm e}^{-t\Delta}$ the heat kernel, defined as
bounded operator on $L^2(M)$ by spectral calculus, see~\eqref{eq:4}
with $\Phi_t(\la)={\mathrm e}^{-t\la}$.   There is the
following result \cite[Thm.s 3.5 and 3.6]{str83}.
\begin{thm}\label{heat_kernel}
There exists a unique smooth function $p:M\times M\times (0,+\infty)\to \R$
such that
\begin{enumerate}
\item for all $m\in M$ and $t>0$, the function $p(m,\cdot,t)\in L^1(M)$
  and $\nor{p(m,\cdot,t)}_{L^1(M)}\leq 1$;
\item for all $m,m'\in M$ and $t>0$ 
  \begin{equation}
    \label{eq:10}
    p(m,m',t)=p(m',m,t)>0;
  \end{equation}
\item for all $f\in L^2(M)$
  \begin{equation}
    \label{eq:43}
    {\mathrm e}^{-t\Delta}f(m)= \int\limits_M p(m,m',t) f(m') \,dm';
  \end{equation}
\item given $f\in L^2(M)$, for all $t>0$
  \begin{equation}
    \label{eq:44}
    \nor{ {\mathrm e}^{-t\Delta}f}_2 \leq \nor{f}_2 \qquad \lim_{t\to 0^+} \nor{{\mathrm e}^{-t\Delta}f-f}_2=0;
  \end{equation}
\item given  $f\in L^2(M)$, for all $t>0$ the function ${\mathrm e}^{-t\Delta}f$ is smooth and 
  \begin{equation}
    \label{eq:45}
    \frac{\partial}{\partial t} {\mathrm e}^{-t\Delta}f = -\Delta {\mathrm e}^{-t\Delta}f .
  \end{equation}
\end{enumerate}
\end{thm}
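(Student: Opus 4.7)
The plan is to construct the kernel via spectral calculus combined with the Sobolev embedding theorem, and then verify each listed property in turn. First, I would observe that $e^{-t\Delta}$ is defined through the spectral calculus of Theorem~\ref{esssad} with $\Phi_t(\la)=e^{-t\la}$, and since $\Delta$ is positive, $|e^{-t\la}|\le 1$ on the spectrum; this immediately yields the $L^2$-contractivity and, by dominated convergence in the spectral representation~\eqref{eq:4}, the strong continuity at $t=0^+$ claimed in item~4.

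Next, for any $s\ge 0$ the function $\la\mapsto e^{-t\la}(1+\la)^{s/2}$ is bounded on $[0,+\infty)$ for each $t>0$, so by characterization~\eqref{eq:36} the operator $e^{-t\Delta}$ maps $L^2(M)$ continuously into $H^s(M)$ for every $s$. Choosing $s>n/2$ and using the Sobolev embedding $H^s(M)\hookrightarrow C(M)$ of Theorem~\ref{embedding}, the functional $f\mapsto(e^{-t\Delta}f)(m)$ is continuous on $L^2(M)$ for each fixed $m$. By Riesz representation there is a unique $p(m,\cdot,t)\in L^2(M)$ satisfying~\eqref{eq:43}, and the symmetry $p(m,m',t)=p(m',m,t)$ follows from the self-adjointness of $e^{-t\Delta}$. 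To obtain joint smoothness, I would apply the semigroup identity $e^{-t\Delta}=e^{-(t/2)\Delta}\circ e^{-(t/2)\Delta}$ to rewrite
\[
  p(m,m',t)=\scal{p(m,\cdot,t/2)}{p(m',\cdot,t/2)}_2,
\]
then apply spectral multipliers of the form $\la^k e^{-(t/2)\la}$ together with the Sobolev embedding of Theorem~\ref{embedding} for arbitrarily large $s$ to obtain jointly continuous derivatives in $(m,m')$ of every order. Combined with $\frac{d}{dt}e^{-t\Delta}=-\Delta e^{-t\Delta}$ in the operator sense, this gives item~5 and the smoothness claim.

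The two subtle points are the strict positivity and the $L^1$ bound. Positivity $p(m,m',t)>0$ I would obtain via a parabolic maximum principle argument: first show $p\ge 0$ by approximating non-negative $f\in L^2(M)$ and using that $u(m,t)=(e^{-t\Delta}f)(m)$ solves the heat equation~\eqref{eq:45}, then upgrade to strict positivity using the connectedness of $M$ and a Harnack/Aronson-type propagation argument on solutions of the heat equation. The $L^1$ estimate $\nor{p(m,\cdot,t)}_{L^1(M)}\le 1$ is equivalent to the semigroup being sub-Markovian; this is where the completeness of $M$ in Assumption~\ref{ass_M} enters decisively, via the essential self-adjointness in Theorem~\ref{esssad}, allowing one to approximate the constant function $1$ by cutoff functions in $\mathcal D(M)$ and pass to the limit.

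The main obstacle I anticipate is precisely this $L^1$-contractivity together with positivity: both properties cannot be read off from spectral calculus alone since they reflect the probabilistic/geometric nature of the semigroup, and so the argument must use the explicit heat-equation structure from~\eqref{eq:45} together with completeness of the Riemannian distance. For the rest, once the kernel has been produced by the $L^2$-to-$C(M)$ mapping property, everything reduces to routine manipulations with the spectral representation and the Sobolev embeddings already recorded above.
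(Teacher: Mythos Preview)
The paper does not actually prove this theorem: it is stated as a quoted result, introduced with ``There is the following result \cite[Thm.s~3.5 and~3.6]{str83}'' and no proof is given. So there is nothing in the paper to compare your attempt against.

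For what it is worth, your outline is a reasonable sketch of the standard route (spectral calculus for item~4, the $L^2\to H^s\hookrightarrow C(M)$ mapping plus Riesz for the existence of the kernel in item~3, the semigroup identity and iterated Sobolev embeddings for smoothness and item~5, a parabolic maximum principle for strict positivity, and completeness via essential self-adjointness for the sub-Markov bound in item~1). This is broadly in the spirit of Strichartz's treatment that the paper cites. The places you flag as delicate---positivity and the $L^1$ contraction---are indeed the steps that require genuine PDE/geometric input beyond spectral calculus, and your proposal correctly locates the role of completeness there.
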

The fact that ${\mathrm e}^{-t\Delta}$ is a semigroup and the uniqueness of the
kernel implies that 
\begin{equation}
  \label{eq:50}
    p(m,m',t+s) = \int_M p(m,m'',t) p(m'',m',s) \ dm''  \qquad t,s>0,\,
    m,m'\in M.
\end{equation}
We are now ready to define the diffusion spaces. In the literature
there is no a standard notation. 
For all $t>0$, set 
\begin{equation}
  \label{eq:47}
  \hh^t ={\mathrm e}^{-\frac{t}{2}\Delta} L^2(M) ,
\end{equation}
which becomes a Hilbert space with respect to the scalar product
\begin{equation}
  \label{eq:48}
  \scal{f}{h}_{\mathcal H^t} = \scal{{\mathrm e}^{\frac{t}{2}\Delta} f}{{\mathrm e}^{\frac{t}{2}\Delta} g}_2. 
\end{equation}
The following result is a direct consequence of the definition.
\begin{prop}
 For all $0<t<t'$ and $s>0$
\[\hh^{t'}\hookrightarrow \hh^{t} \hookrightarrow  H^{s}(M).\]   
\end{prop}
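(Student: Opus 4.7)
The plan is to exploit the semigroup property of $\mathrm{e}^{-t\Delta}$ for the first embedding, and the uniform boundedness of an explicit spectral multiplier for the second one, using the characterization~\eqref{eq:36} of $H^s(M)$.

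For $\mathcal{H}^{t'}\hookrightarrow\mathcal{H}^{t}$: given $f\in\mathcal{H}^{t'}$ write $f=\mathrm{e}^{-\frac{t'}{2}\Delta}g$ with $g\in L^2(M)$ and $\|f\|_{\mathcal H^{t'}}=\|g\|_2$. Setting $h=\mathrm{e}^{-\frac{t'-t}{2}\Delta}g$, the semigroup property gives $f=\mathrm{e}^{-\frac{t}{2}\Delta}h$, hence $f\in\mathcal H^{t}$; moreover, since $\mathrm{e}^{-\frac{t'-t}{2}\Delta}$ is a contraction on $L^2(M)$ by~\eqref{eq:44}, we obtain $\|f\|_{\mathcal H^{t}}=\|h\|_2\le\|g\|_2=\|f\|_{\mathcal H^{t'}}$, which both proves the set-theoretic inclusion and the continuity of the embedding. (Injectivity of the semigroup, coming from $\mathrm{e}^{-t\lambda/2}>0$ and the spectral theorem, ensures that the $\mathcal{H}^t$-norm is well-defined in the first place.)

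For $\mathcal{H}^t\hookrightarrow H^s(M)$: by Theorem~\ref{sobolev}(b) it suffices to show that any $f\in\mathcal{H}^t$ lies in the range of $(\mathrm{Id}+\Delta)^{-s/2}$ and that $\|(\mathrm{Id}+\Delta)^{s/2}f\|_2\lesssim \|f\|_{\mathcal H^t}$. Writing $f=\mathrm{e}^{-\frac{t}{2}\Delta}h$ with $h\in L^2(M)$, spectral calculus yields
\begin{equation*}
(\mathrm{Id}+\Delta)^{s/2}f=\Phi(\Delta)h,\qquad \Phi(\lambda)=(1+\lambda)^{s/2}\mathrm{e}^{-\frac{t}{2}\lambda}.
\end{equation*}
The function $\Phi$ is continuous on $[0,+\infty)$ and tends to $0$ as $\lambda\to+\infty$, so $C_{s,t}:=\sup_{\lambda\ge 0}\Phi(\lambda)<+\infty$. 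Therefore $\Phi(\Delta)$ is bounded on $L^2(M)$ with operator norm at most $C_{s,t}$, which gives $\|(\mathrm{Id}+\Delta)^{s/2}f\|_2\le C_{s,t}\|h\|_2=C_{s,t}\|f\|_{\mathcal H^t}$, hence $f\in H^s(M)$ with $\|f\|_{H^s,2}\le C_{s,t}\|f\|_{\mathcal H^t}$.

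Neither step looks delicate; the only mild care needed is to handle $\mathrm{e}^{\frac{t}{2}\Delta}$ in~\eqref{eq:48} as an unbounded spectral operator whose inverse $\mathrm{e}^{-\frac{t}{2}\Delta}$ is an injective contraction, so that the identification $\|f\|_{\mathcal{H}^t}=\|h\|_2$ for $f=\mathrm{e}^{-\frac{t}{2}\Delta}h$ is unambiguous. Once this is settled, both inclusions reduce to estimating a bounded spectral multiplier, and the argument is essentially one line each.
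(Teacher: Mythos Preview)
Your proof is correct and follows essentially the same route as the paper: the semigroup property for $\mathcal{H}^{t'}\hookrightarrow\mathcal{H}^{t}$, and boundedness of the spectral multiplier $\Phi(\lambda)=(1+\lambda)^{s/2}\mathrm{e}^{-t\lambda/2}$ for $\mathcal{H}^{t}\hookrightarrow H^s(M)$. Your write-up is in fact slightly more careful than the paper's (you invoke the contraction bound~\eqref{eq:44} explicitly and note the injectivity needed for the $\mathcal{H}^t$-norm to be well-defined).
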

\begin{proof}
 The semi-group property of ${\mathrm e}^{-t\Delta}$ shows that
\[
  \hh^{t'} = {\mathrm e}^{-\frac{t}{2}\Delta} {\mathrm e}^{-\frac{(t'-t)}{2}\Delta} L^2(M)\subset {\mathrm e}^{-\frac{t}{2}\Delta}
  L^2(M)=  \hh^{t},
\]
and the inclusion is continuos since $ {\mathrm
  e}^{-\frac{t}{2}\Delta} $ is bounded. 

Since the function $\varphi_t(\sigma)={\mathrm e}^{-t \sigma/2}
(1+\sigma)^{s/2}$ is bounded on $[0,+\infty)$, the operator
$\varphi_t(\Delta)$ is bounded on $L^2(M)$. If $f\in \hh^t$, then for
some $g\in L^2(M)$
\[
f = {\mathrm e}^{-\frac{t}{2}\Delta} g =
  (\Id+\Delta)^{-\frac{s}{2}}\varphi_t(\Delta) g =
  (\Id+\Delta)^{-\frac{s}{2}} h
\]
with $h=\varphi_t(\Delta) g  \in L^2(M)$, so that $f\in H^s(M)$, and
\[ \nor{f}_{H^s(M)}=\nor{g}_2\leq \nor{ \varphi_t(\Delta)}_2
  \nor{g}_2=   \nor{ \varphi_t(\Delta)}_2\nor{f}_{\hh^t}.\]
\end{proof}

\section{Compact manifolds}\label{sec:compact}
If $M$ is compact, the above equations are easier to write since
$\Delta$ admits a base of eigenfunctions, as shown  by the following classical result.
\begin{thm}[Sturm-Liouville decomposition]\label{sturm-liou}
Assume that $M$ is compact. There exists an orthonormal base
$\set{f_k}_{k\in\N}$ of $L^2(M)$ such  that  each function $f_k$ is smooth and
  \begin{equation}
    \label{eq:31}
    \Delta f_k = \lambda_k \,f_k\qquad k\in\N ,
  \end{equation}
where
\[ 0=\lambda_0 < \lambda_1\leq \lambda_2\leq \ldots \lambda_k\leq
  \ldots \qquad \lim_{k\to+\infty} \lambda_k=+\infty,\]
and the  multiplicity of each $\lambda_k$ is finite (each eigenvalue is repeated according to its multiplicity). Furthermore, 
there exist two universal constants $C>0$ and $k^*$ such that  for all $k\geq k^*$ 
\begin{equation}
  \label{eq:1}
  \abs{f_k(m)} \leq C \la_k^{n/4}   \qquad m\in M.
\end{equation}
Finally, the vector space $\operatorname{span}\set{f_k\mid
  k\in\N}$ is dense in $C^\ell(M)$ for all $\ell\in\N$
\end{thm}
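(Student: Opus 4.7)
The plan is to combine the spectral theorem for compact self\mbox{-}adjoint operators with elliptic regularity and the short\mbox{-}time heat kernel estimate. For the existence of the eigenbasis I would start from the fact that on compact $M$ the Rellich--Kondrakov result (Thm.~\ref{rellich}) gives a compact embedding $H^2(M)\hookrightarrow L^2(M)$; combined with the norm equivalence $\|f\|_2+\|\Delta f\|_2\sim\|f\|_{H^2}$ coming from Thm.~\ref{sobolev}, this implies that the resolvent $(\Id+\Delta)^{-1}\colon L^2(M)\to L^2(M)$ factors through $H^2(M)$ and is therefore compact. Self\mbox{-}adjointness of $\Delta$ (Thm.~\ref{esssad}) transfers to $(\Id+\Delta)^{-1}$, so the spectral theorem for compact self\mbox{-}adjoint operators delivers an orthonormal basis $\set{f_k}_{k\in\N}$ with eigenvalues $(1+\la_k)^{-1}\to 0^+$ of finite multiplicity, equivalently $\Delta f_k=\la_k f_k$ with $\la_k\to+\infty$. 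Positivity of $\Delta$ forces $\la_k\geq 0$. The constants are harmonic, so $\la_0=0$ is attained; and if $\Delta f=0$ then $\scal{\Delta f}{f}_2=\|\nabla f\|_2^2=0$, whence $\nabla f\equiv 0$ and, since $M$ is connected, $f$ is constant, giving $0=\la_0<\la_1$.

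For smoothness of each $f_k$ I would iterate $\Delta^j f_k=\la_k^j f_k\in L^2(M)$ together with the equivalence of $\|\cdot\|_{H^{2j},3}$ with the other Sobolev norms (Thm.~\ref{sobolev}) to conclude $f_k\in H^{2j}(M)$ for every $j$, and then apply the Sobolev embedding (Thm.~\ref{embedding}, or the $C_b^k$ variant in the remark thereafter) to obtain $f_k\in C^\infty(M)$. For the pointwise bound \eqref{eq:1} I would use the spectral expansion of the heat kernel: by \eqref{eq:43} and orthonormality,
\[
p(m,m,t)\;=\;\sum_{k\in\N} e^{-t\la_k}\,|f_k(m)|^2 .
\]
On compact $M$ the short\mbox{-}time asymptotics of $p$ (equivalently, comparison with the Euclidean heat kernel in a normal coordinate chart) yield the on\mbox{-}diagonal estimate $p(m,m,t)\leq C\,t^{-n/2}$ uniformly in $m$ for $0<t\leq 1$. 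Dropping all but the $k$-th term and choosing $t=1/\la_k$ (legitimate once $\la_k\geq 1$) gives $|f_k(m)|^2\leq e\,C\,\la_k^{n/2}$, which is \eqref{eq:1}; the threshold $k^\ast$ is simply the first index with $\la_k\geq 1$.

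For density of $\operatorname{span}\set{f_k}$ in each $C^\ell(M)$, I would first reduce to approximating functions in $C^\infty(M)$, using that $C^\infty(M)=\mathcal D(M)$ is $C^\ell$-dense in $C^\ell(M)$ by local mollification in a finite atlas of normal charts. For $f\in C^\infty(M)$ the iteration argument shows
\[
\sum_{k\in\N}\la_k^{2j}\,|\scal{f}{f_k}_2|^2\;=\;\|\Delta^j f\|_2^2\;<\;+\infty\qquad(\,j\in\N),
\]
so the partial sums $S_N=\sum_{k\leq N}\scal{f}{f_k}_2\,f_k$ are Cauchy in the norm $\|\cdot\|_{H^s,3}$ for every real $s\geq 0$ and converge to $f$ in $H^s(M)$. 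Picking $s>\ell+n/2$ and invoking $H^s(M)\hookrightarrow C^\ell(M)$ on compact $M$ transfers the convergence to $C^\ell$. The main obstacle is the pointwise bound \eqref{eq:1}: the spectral decomposition, smoothness, and density steps are direct consequences of the general machinery already set up in the paper, whereas the precise exponent $n/4$ genuinely relies on the non\mbox{-}trivial analytic input $p(m,m,t)\lesssim t^{-n/2}$ and is not visible from the abstract spectral picture alone.
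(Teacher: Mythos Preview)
Your argument is correct and essentially self\mbox{-}contained, whereas the paper does not give a proof at all: it simply refers to \cite[p.~139]{cha84} and \cite[p.~53]{ber86} for the spectral decomposition and the $C^\ell$-density, and to \cite[Lemma~3.1]{por16} for the sup-norm bound~\eqref{eq:1}. So there is no detailed route in the paper to compare against; what you wrote is precisely the standard argument one finds in those references (compact resolvent via Rellich, elliptic bootstrap for smoothness, heat-kernel diagonal estimate for~\eqref{eq:1}, and Sobolev embedding for $C^\ell$-convergence of the eigenfunction partial sums).

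Two minor remarks. First, when you invoke $p(m,m,t)=\sum_k e^{-t\la_k}|f_k(m)|^2$, this is exactly~\eqref{eq:46}, which in the paper is stated \emph{after} Theorem~\ref{sturm-liou}; to avoid apparent circularity you can derive it directly from the semigroup identity~\eqref{eq:50} and Parseval (write $p(m,m,2t)=\|p(m,\cdot,t)\|_2^2$ and expand in the eigenbasis you have just constructed), which your phrasing ``by~\eqref{eq:43} and orthonormality'' essentially does. Second, as you yourself flag, the one genuinely external analytic input is the uniform short-time bound $p(m,m,t)\le C t^{-n/2}$; this is not proved in the paper (Theorem~\ref{heat_kernel} does not contain it) and is the real content behind the exponent $n/4$. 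That is consistent with the paper's own handling, which outsources~\eqref{eq:1} to \cite{por16}.
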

\begin{proof}
The claims can be found in~\cite[page 139]{cha84} or \cite[page
53]{ber86}, up to the bound~\eqref{eq:1}, which is proved 
in Lemma~3.1 of \cite{por16}. 
\end{proof}
We remark that the estimate~\eqref{eq:1} is only slightly better as a trivial application of the Sobolev embedding theorem and not sharp in many cases.
E.g., if $M=S^1$, the eigenvalues are $\lambda_k=k^2$ (to be counted twice according to their multiplicity for $k \neq 0$), while the eigenfunctions
$f_k(t)=(2 \pi)^{-1/2} \exp(ikt)$, normalized in $L^2(M)$ for $k \in
\mathbb Z$, are uniformly bounded in $L^\infty(M)$, independent of $k$. 

Note that~\eqref{eq:3} simplifies as
\begin{equation}
  \label{eq:5}
  \Phi(\Delta) f = \sum_{k=0}^{+\infty} \Phi(\la_k) \scal{f}{f_k}_2
  f_k \qquad \operatorname{dom}\Phi(\Delta)=\set{f\in L^2(M)\mid
    \sum_k \Phi(\la_k)^2 \,\abs{\scal{f}{f_k}_2}^2},
\end{equation}
where the first series is unconditionally convergent in $L^2(M)$. As a
consequence, given $s\in (0,+\infty)$ and $f\in L^2(M)$
the following facts are equivalent
  \begin{alignat*}{2}
   &  f\in H^s(M) \quad &\Longleftrightarrow & \quad
    \nor{f}^2_{H^s,2}=\sum_{k} (1+\lambda_k)^{s} |\scal{f}{f_k}|^2
    <+\infty \\
    &   \quad & \Longleftrightarrow & 
    \quad
    \nor{f}^2_{H^s,3}=\sum_{k} (1+\lambda_k^{s}) |\scal{f}{f_k}|^2
    <+\infty.
  \end{alignat*}
Finally, for any $t>0$,
\begin{equation}
  \label{eq:46}
  p(m,m',t)= \sum_k {\mathrm e}^{-t\lambda_k} f_k(m) f_k(m'),
\end{equation}
where the series converges absolutely and uniformly on $M$, \cite[page
139]{cha84}. Furthermore, given $f\in L^2(M)$ 
\begin{equation}
  \label{eq:49}
 f\in \hh^t \qquad \Longleftrightarrow \qquad   \nor{f}^2_{\mathcal H^t} = \sum_k {\mathrm e}^{t\lambda_k} |\scal{f}{f_k}|^2<+\infty.
\end{equation}

%%%%%%%%%%%%%%%%%%%%%%%%%%%%%%%%%%%%%%%%%%%%%%%%%%%%

\section{Reproducing kernel Hilbert spaces}\label{sec:RKHS}

In this section, we show that the Laplacian and the heat kernel allow
to define a class of reproducing kernel Hilbert spaces on the
manifold $M$. We refer to Appendix~\ref{app:rkhs} for basic definitions on RKHS. 

By construction,  $H^s(M)$ is continuously embedded in $L^2(M)$ and we
denote by $J_s$ the inclusion. 

\begin{thm} \label{rkhs}
Let $M$ be a manifold satisfying Assumption~\ref{ass_M}.
  \begin{enumerate}[i)]
  \item For any $s\in (0,+\infty)$ such that $s > n/2$, the Sobolev
    space $H^s(M)$ is a reproducing kernel  Hilbert space on $M$,
    its reproducing kernel $K_s$ is separately continuous and locally
    bounded and
    \begin{equation}
J_sJ_s^*  = (I+\Delta)^{-s} = L_{K_s}\label{eq:11} .
\end{equation}
where $L_{K_s}:L^2(M)\to L^2(M)$ is the integral operator with kernel $K_s$.
\item If $M$ is compact,
    then the kernel is jointly continuous and bounded.
\item For all $t>0$ the space $\hh^t$ is a reproducing
    kernel Hilbert space whose reproducing kernel is
    $p(t,\cdot,\cdot)$.
  \end{enumerate}
\end{thm}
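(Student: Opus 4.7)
The plan is to handle the three items in sequence, invoking Sobolev embedding (Theorem~\ref{embedding}) together with spectral calculus for (i), specializing to the compact case for (ii), and using the semigroup identity~\eqref{eq:50} for (iii).

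For (i), the hypothesis $s>n/2$ and Theorem~\ref{embedding} give the chain $H^s(M)\hookrightarrow\mathcal{CH}^{\sigma}(M)\hookrightarrow C(M)$ with $\sigma=s-n/2>0$, so each evaluation functional $\delta_m$ is continuous on $H^s(M)$ and $H^s(M)$ is an RKHS with some reproducing kernel $K_s$; since $K_s(\cdot,m')=K^s_{m'}\in H^s(M)\subset C(M)$, separate continuity follows. To pin down $J_sJ_s^*$ I use the inner product~\eqref{eq:6b}: for $g\in L^2(M)$ and $f\in H^s(M)$ the defining identity
\[
\scal{J_s^*g}{f}_{2,s}=\scal{(\Id+\Delta)^{s/2}J_s^*g}{(\Id+\Delta)^{s/2}f}_2=\scal{g}{f}_2
\]
forces $J_s^*g=(\Id+\Delta)^{-s}g$; the right hand side lies in $H^s(M)$ via the factorization $(\Id+\Delta)^{-s}g=(\Id+\Delta)^{-s/2}[(\Id+\Delta)^{-s/2}g]$ combined with~\eqref{eq:36}. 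Composing yields $J_sJ_s^*=(\Id+\Delta)^{-s}$ on $L^2(M)$, and the standard identity $J_sJ_s^*=L_{K_s}$ for an RKHS continuously embedded in $L^2(M)$ then matches this Bessel potential with the integral operator $L_{K_s}$. For local boundedness, pick any compact $A\subset M$; the embedding gives $C_A$ with $|f(m)|\le C_A\nor{f}_{H^s}$ for $m\in A$, so taking $f=K^s_m$ yields $\nor{K^s_m}_{H^s}^2=K_s(m,m)\le C_A\nor{K^s_m}_{H^s}$, hence $\nor{K^s_m}_{H^s}\le C_A$, and Cauchy--Schwarz bounds $|K_s(m,m')|$ by $C_AC_{A'}$ on $A\times A'$.

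For (ii), compactness of $M$ makes $C_A$ a uniform constant, so $K_s$ is globally bounded; joint continuity follows from H\"older continuity of evaluation, as the embedding into $\mathcal{CH}^{\sigma}(M)$ supplies $C>0$ with $|f(m)-f(m')|\le C\nor{f}_{H^s}d(m,m')^\sigma$, and applying this with $f=K^s_{m_2}$ controls the first slot while a symmetric estimate handles the second. For (iii), the proposition at the end of Section~\ref{sec:diffusion} embeds $\hh^t$ in every $H^s(M)$. By symmetry of $p$ and~\eqref{eq:50}, $\nor{p(t/2,m,\cdot)}_2^2=p(t,m,m)<+\infty$ from the smoothness given by Theorem~\ref{heat_kernel}, so $p(t/2,m,\cdot)\in L^2(M)$, and~\eqref{eq:50} once more gives $p(t,m,\cdot)={\mathrm e}^{-\frac{t}{2}\Delta}[p(t/2,m,\cdot)]\in\hh^t$. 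For $f={\mathrm e}^{-\frac{t}{2}\Delta}g\in\hh^t$ with $g\in L^2(M)$, combining~\eqref{eq:48} with~\eqref{eq:43} gives
\[
\scal{f}{p(t,m,\cdot)}_{\hh^t}=\scal{g}{p(t/2,m,\cdot)}_2=\int_M p(t/2,m,m')g(m')\,dm'=({\mathrm e}^{-\frac{t}{2}\Delta}g)(m)=f(m),
\]
which is the reproducing property.

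The main obstacle I foresee is the clean identification $J_s^*=(\Id+\Delta)^{-s}$ in (i): the Bessel potential $(\Id+\Delta)^{-s}$ is defined by spectral calculus as a contraction on $L^2(M)$, so it has to be reinterpreted as an $L^2$-to-$H^s$ operator via~\eqref{eq:36}, and reconciling the spectral norm with the $H^s$ norm takes a bit of care; once this is in place the remaining steps are essentially bookkeeping.
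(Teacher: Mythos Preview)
Your proof is correct and, for parts (i) and (iii), follows essentially the same lines as the paper: the Sobolev embedding~\eqref{eq:9} gives the RKHS structure, the adjoint computation via the inner product~\eqref{eq:6b} identifies $J_sJ_s^*$ with the Bessel potential, and the semigroup identity~\eqref{eq:50} handles the diffusion kernel.

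The one genuine point of divergence is part (ii). The paper argues indirectly: joint continuity of $K_s$ is equivalent to continuity of $m\mapsto K_m$ in $H^s(M)$, which in turn is equivalent to equicontinuity of the unit ball of $H^s(M)$ as a subset of $C(M)$; Ascoli--Arzel\`a then reduces this to compactness of the embedding $H^s(M)\hookrightarrow C(M)$, supplied by the Rellich--Kondrakov theorem (Theorem~\ref{rellich}). You instead go directly through the H\"older embedding $H^s(M)\hookrightarrow\mathcal{CH}^\sigma(M)$ and split $K_s(m_1,m_2)-K_s(m_1',m_2')$ into two increments controlled by $d(m_1,m_1')^\sigma$ and $d(m_2,m_2')^\sigma$ with coefficients involving the (globally bounded) norms $\nor{K^s_{m_i}}_{H^s}$. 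Your route is more elementary in that it avoids Rellich--Kondrakov entirely; the paper's route, on the other hand, packages the argument into a general RKHS statement (joint continuity $\Leftrightarrow$ compactness of the embedding into $C(M)$) that is of independent interest. Two small points you should make explicit: you need $0<\sigma<1$ so that $\mathcal{CH}^\sigma$ coincides with the classical H\"older class (always possible by taking $\sigma<\min(1,s-n/2)$), and the passage from the chart-wise norm~\eqref{eq:40} to a global estimate $|f(m)-f(m')|\le C\nor{f}_{H^s}d(m,m')^\sigma$ on the compact manifold requires a brief argument with the partition of unity.
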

\begin{proof}
The first claim is a consequence of~\eqref{eq:9}. Indeed, given a
compact subset $A\subset M$, 
\[ \abs{f(m)}\leq \sup_{m\in A} \abs{f(m)}\leq C_A \nor{f}_{H^s(M)}
  \qquad m\in A, f\in H^m(M),\]
where $C_A$ is a constant independent of $m$ and $f$. Hence, by Riesz
lemma, there exists $K_m\in H^s(M)$ such that 
\[ f(m)= \scal{f}{K_m}_{H^s(M)}\]
with $ \nor{K_m}_{H^s(M)}\leq C_A$, so that the kernel
$K_s(m,m')=\scal{K_m}{K_{m'}}_{H^s(M)}$ is locally bounded. Since
$K_m\in C(M)$, then $K_s$ is separately continuous.
To show~\eqref{eq:11}, take $F\in L^2(M)$ and $f\in H^s(M)$, then 
\begin{alignat*}{1}
  \scal{F}{J_sf}_2 &= \scal{(\Id+\Delta)^s(\Id+\Delta)^{-s}F }{ J_sf }_2 =
  \scal{ (\Id+\Delta)^{s/2} (\Id+\Delta)^{-s}F } { (\Id+\Delta)^{s/2} J_sf }_2 \\
\end{alignat*}
where both $ (\Id+\Delta)^{-s}F $ and $f$ are in $\operatorname{dom}(\Id+\Delta)^{s/2}$. Then there exists $g\in
H^s(M)$ such that $J_sg=(\Id+\Delta)^{-s}F$ and 
\[ \scal{F}{J_sf}_2=\scal{g}{f}_{H^s(M)}.\]
Since $f$ is arbitrary, it follows that $J_s^*F=g$, so that $J_sJ_s^* F=
(\Id+\Delta)^{-s}F$. On the other hand, since $H^s(M)\subset L^2(M)$, 
$J_sJ_s^*$ is the integral operator with kernel $K_s$ \cite[Prop.~4.4]{carmeli2006vector}.

We now prove item~ii). By a general result on reproducing kernel Hilbert spaces, the kernel 
$K_s$   is jointly continuous if and only if the map
$m\mapsto K_m$ is continuous from $M$ to $H^s(M)$. Denoted by $B_1$ the
unit ball in $H^s(M)$, Since
\[ \nor{K_m - K_{m_0}}_{H^s(M)} =\sup_{f\in B_1} \abs{f(m)-f(m_0)},\]
the joint continuity is equivalent to the fact that  the family 
$B_1$ regarded as a subset of $C(M)$ is equicontinuous. {Since $\set{f(m)\mid f\in B_1}\subset \R$ is bounded by $\nor{K_m}$, Ascoli-Arzel\'a
theorem, which holds true for any locally compact space,}  implies that this last condition is 
equivalent to the fact that the embedding of $H^s(M)$ into $C(M)$ is compact, see
\cite[Prop. 5.3]{carmeli2006vector}.   {Since } $M$ is compact,
Thm.~\ref{rellich} provides the conclusion. 

{We now prove iii). Fix $t>0$. For any $m\in M$, by}~\eqref{eq:50} (with the choice $s=t$ and $m'=m$) and the symmetry
of the kernel, it follows that $p(m,\cdot,t)\in L^2(M)$. Given
$f\in \hh^t$, $f= {\mathrm e}^{-\frac{t}{2} \Delta} g$  with $g\in L^2(M)$, for all  $m\in M$~\eqref{eq:43} gives 
\[
f(m) = \int_M p\big(m,m',\frac{t}{2}\big) g(m') \, dm'  = \scal{p\big(m,\cdot,\frac{t}{2}\big)}{g}_2.
\]
By Cauchy-Schwarz inequality
\[
\abs{f(m)} \leq \nor{p(m,\cdot, \frac{t}{2})}_2 \nor{g}_2 = C_m \nor{f}_{\hh^t}, 
\]
so the evaluation functional at $m$ is continuous. Furthermore, with
the choice
\[
K_m= {\mathrm e}^{-\frac{t}{2} \Delta} p(m,\cdot, \frac{t}{2}) = p(m,\cdot,t)
\]
we have that
\[
\scal{f}{K_m}_{\hh^t}= \scal{g}{p(m,\cdot,  \frac{t}{2})}_2 = f(m)
\]
so that the reproducing kernel of $\hh_t$ is precisely
$p(\cdot,\cdot,t)$. 
\end{proof}

If $M$ is compact, we have a natural characterization of the fact that
$H^s(M)$ is a reproducing kernel Hilbert space. The notation is as in Thm.~\ref{sturm-liou}. 
\begin{prop} \label{kernelexpansion}
Let $M$ be compact. Given $s\in [0,+\infty)$, the Sobolev space $H^s(M)$ is a reproducing kernel Hilbert
space if and only  if for all $m\in M$ one has
\begin{equation}
  \label{eq:41}
\sum_{k} (1+\lambda_k)^{-s} |f_k(m)|^2<+\infty.
\end{equation}
In such a case, the  reproducing kernel $K_s$ is given by
\begin{equation}
  \label{eq:42}
  K_s(m,m') = \sum_{k} (1+\lambda_k)^{-s} f_k(m) f_k(m')\qquad m,m'\in M,
\end{equation}
where the series is absolutely convergent.
\end{prop}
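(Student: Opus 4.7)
The plan is to work entirely in the Sturm--Liouville basis $\set{f_k}_{k\in\N}$ from Theorem~\ref{sturm-liou} and use the characterization of $H^s(M)$ via Fourier coefficients stated after~\eqref{eq:5}: a function $f\in L^2(M)$ belongs to $H^s(M)$ iff its coefficients $c_k=\scal{f}{f_k}_2$ satisfy $\sum_k(1+\lambda_k)^s|c_k|^2<+\infty$, and in terms of the scalar product~\eqref{eq:6b} one has $\scal{f}{g}_{2,s}=\sum_k(1+\lambda_k)^s c_k\,d_k$, so that $\set{(1+\lambda_k)^{-s/2}f_k}_k$ is an orthonormal basis of $H^s(M)$.

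For the sufficiency direction, I would fix $m\in M$ and define the candidate representer
\[
K_m \;=\; \sum_k (1+\lambda_k)^{-s} f_k(m)\, f_k .
\]
Its $H^s$-coefficients are $\alpha_k=(1+\lambda_k)^{-s}f_k(m)$, so $\nor{K_m}_{H^s,2}^2=\sum_k(1+\lambda_k)^{-s}|f_k(m)|^2$, and assumption~\eqref{eq:41} gives $K_m\in H^s(M)$. For any $f=\sum_k c_k f_k\in H^s(M)$, Cauchy--Schwarz yields $\sum_k |c_k f_k(m)| \leq \nor{f}_{H^s,2}\,\nor{K_m}_{H^s,2}<+\infty$, so the pointwise series $\sum_k c_k f_k(m)$ converges absolutely; I would use it to select the canonical pointwise representative of $f$ at $m$. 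A direct calculation then gives $\scal{f}{K_m}_{2,s}=\sum_k c_k f_k(m)=f(m)$, proving continuity of the evaluation functional at $m$, hence that $H^s(M)$ is an RKHS. The reproducing kernel is then $K_s(m,m')=\scal{K_{m'}}{K_m}_{2,s}$, which simplifies by the orthonormality of $\set{(1+\lambda_k)^{-s/2}f_k}$ to the formula~\eqref{eq:42}; absolute convergence of~\eqref{eq:42} follows from one more Cauchy--Schwarz bound using~\eqref{eq:41} at $m$ and $m'$.

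For the necessity direction, I would suppose $H^s(M)$ is an RKHS and let $K_m\in H^s(M)$ be the Riesz representer of evaluation at $m$. Expanding $K_m=\sum_j\alpha_j f_j$ in the $L^2$-basis and testing the reproducing identity against the smooth eigenfunctions $f=f_k$ gives $f_k(m)=\scal{f_k}{K_m}_{2,s}=(1+\lambda_k)^s \alpha_k$, so $\alpha_k=(1+\lambda_k)^{-s}f_k(m)$; membership $K_m\in H^s(M)$ then translates exactly into~\eqref{eq:41}.

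The main conceptual obstacle is the fact that elements of $H^s(M)$ are a priori only equivalence classes of distributions, so that ``$f(m)$'' and ``$f_k(m)$'' must be interpreted with care. The eigenfunctions $f_k$ come already as \emph{genuine} smooth functions by Theorem~\ref{sturm-liou}, so their pointwise values are unambiguous; the argument above uses this to extract the value $\alpha_k$ from the inner product $\scal{f_k}{K_m}_{2,s}$. On the other side, once~\eqref{eq:41} is assumed, the Cauchy--Schwarz bound provides a concrete bounded representative $m\mapsto\sum_k c_k f_k(m)$ for each $f\in H^s(M)$, and this is precisely the representative with respect to which the RKHS structure is defined; once this identification is in place, the two directions become dual calculations in coefficient space.
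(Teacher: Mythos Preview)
Your argument is correct and self-contained. It differs from the paper's proof in both directions, though the underlying content is closely related.

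For the direction ``\eqref{eq:41} $\Rightarrow$ RKHS'', the paper does not build the representer $K_m$ directly. Instead it defines a feature map $\Phi:M\to L^2(M)$, $\Phi(m)=\sum_k(1+\lambda_k)^{-s/2}f_k(m)f_k$, proves that the induced linear map $\Phi_*:L^2(M)\to\R^M$, $\Phi_*(g)(m)=\scal{g}{\Phi(m)}_2$, is injective, and then invokes the abstract feature-space construction (Theorem~\ref{feature}) to obtain an RKHS $\mathcal K$ with kernel~\eqref{eq:42}. It finishes by identifying $\Phi_*$ with $(\Id+\Delta)^{-s/2}$, so that $\mathcal K=H^s(M)$ with the norm $\nor{\cdot}_{H^s,2}$. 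Your ``select a canonical pointwise representative via $m\mapsto\sum_k c_k f_k(m)$'' is exactly the map $\Phi_*$ in disguise, and your Cauchy--Schwarz step encodes the well-definedness of $\Phi(m)$; the injectivity check in the paper is what guarantees that this choice of representative is consistent with the $L^2$-class (the point you flag as the conceptual obstacle). So the two arguments are equivalent, but yours avoids the feature-map formalism and stays entirely in coefficient space.

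For the direction ``RKHS $\Rightarrow$ \eqref{eq:41}'', the paper does not test against eigenfunctions. It uses~\eqref{eq:11} to identify the integral operator $L_{K_s}$ with $(\Id+\Delta)^{-s}$, whose spectral decomposition is known, and then appeals to Mercer's theorem to obtain the pointwise expansion~\eqref{eq:42}; setting $m=m'$ gives~\eqref{eq:41}. Your approach is more elementary and bypasses Mercer, but it relies on the (natural) convention that the RKHS evaluation of $f_k$ at $m$ agrees with the value of the smooth function $f_k$ at $m$; the paper's route through Mercer handles this identification implicitly via the full-support hypothesis on the volume measure.
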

\begin{proof}
If $H^s(M)$ is a reproducing kernel Hilbert space, \eqref{eq:42} is
the content of the Mercer theorem, see~\eqref{eq:mercer} and, for
example, \cite[page 403]{carmeli2006vector}  taking into
account~\eqref{eq:11} and the fact that the volume measure $dm$ has support equal to $M$.
Assume now~\eqref{eq:41}. 
Define the feature map $\Phi:M\to L^2(M)$
\[ \Phi(m)= \sum_k (1+\lambda_k)^{-s/2} f_k(m) f_k,\]
  which is well defined since $\set{f_k}_k$ is a base and
  $\set{(1+\lambda_k)^{-s/2} f_k(m)}_k$ is an $\ell_2$ sequence for
  all $m\in M$.  {Denoted by $\R^M$ the vector space of functions from
  $M$ to $\R$}, we claim that the linear map $\Phi_*: L^2(M) \to \R^M$, 
\[ \Phi_*(g)(m)=\scal{g}{\Phi(m)}_2 =\sum_k (1+\lambda_k)^{-s/2} f_k(m)
  \scal{g}{f_k}_2, \] 
is injective. In fact, take $g\in L^2(M)$ such that $\Phi_*(g)=0$, {\em i.e.}
\begin{equation}
  \label{eq:12}
  \sum_k (1+\lambda_k)^{-s/2} f_k(m)
  \scal{g}{f_k}_2 = 0 \qquad \forall m\in M.
\end{equation}
Since the sequence $\set{(1+\la_k)^{-s/2}}_k$ is bounded and $\set{f_k}_k$
is a base in $L^2(M)$, then there exists $h\in L^2(M)$ such that
  \begin{equation}
h=\sum_k (1+\lambda_k)^{-s/2} 
  \scal{g}{f_k}_2 f_k .\label{eq:13}
\end{equation}
Since the series converges in $L^2(M)$,  there exists an
increasing sequence $\set{n_j}_j$ of integers such that, for almost
all $m\in M$, 
\[ \lim_{j\to\infty}\sum_{k=1}^{n_j} (1+\lambda_k)^{-s/2} f_k(m)
  \scal{g}{f_k}_2 =h(m). \]
Eq~\eqref{eq:12} implies that  $h=0$ in $L^2(M)$. By~\eqref{eq:13}
 it follows that for all indexes $k$, $ (1+\lambda_k)^{-s/2}
\scal{g}{f_k}_2=0$ and, hence,  $\scal{g}{f_k}_2=0$, so that $g=0$, as claimed. 

A standard result on reproducing kernel
Hilbert spaces, see~Thm.~\ref{feature} in Appendix or \cite[Thm. 2.4]{carmeli2006vector},   implies that the range of $\Phi$ is a reproducing
kernel Hilbert space $\mathcal K$ with reproducing kernel 
\begin{equation}
  \label{eq:14}
  K(m,m')= \scal{\Phi(m)}{\Phi(m')}_2= \sum_{k} (1+\lambda_k)^{-s} f_k(m) f_k(m')\qquad m,m'\in M.
\end{equation}
Since  $\Phi_*$ is injective,  $\Phi_*$ 
is an isometry from $L^2(M)$ onto $\mathcal K$.  Reasoning as in the
proof of injectivity, it is possibile to show that, given $g\in
L^2(M)$,  for almost all $m\in M$
\[ \big((\Id + \Delta)^{-s/2} g\big)(m)= \Phi_*(g)(m) \qquad
  \nor{\Phi_*(g)}_{\kk}= \nor{g}_2.\]
Compering with~\eqref{eq:36}, it follows  that $\Phi_*(g)\in H^s(M)$,
so that $\mathcal K= H^s(M)$ and $\nor{\Phi_*(g)}_{H^{s}(M)} =
\nor{\Phi_*(g)}_{\mathcal K}$. 
Formula~\eqref{eq:42} is  a restatement of~\eqref{eq:14}. 
\end{proof}
Since $M$ is compact the interpolation equality given
  by~\eqref{eq:16} can be also deduced by
  Proposition~\ref{prop:inter} in Appendix. For example,  given $s>0$ and $0<r<1$
\[
H^{sr} = [L^2(M), H^s(M)]_{r,2}
\]
see also \cite{fefupe16} for further results.

%%%%% EXAMPLES %%%%%%%%%%%%%%%%%%%%%%%%%%%
%%%%%%%%%%%%%%%%%%%%%%%%%%%%%%%%%%%%%%%%%%

\section{Examples}\label{sec:examples}

{In this section, we specialize the above discussion considering in details a few examples.}
\subsection{ The Euclidean case}  
%Fourier analysis and Gaussian kernel.

Denoting by $\cF: \cS^{\prime}(\R^n) \to \cS^{\prime}(\R^n)$  the Fourier transform on tempered distributions, standard arguments (see \cite{hor}) show
that the distributional kernel of $(1 +\Delta)^{-s} = \cF^{-1} (1+|\xi|^2)^{-s} \cF$ is given by
\begin{equation}  \label{fte}
K_s(x,y)= (2\pi)^{-n} \int_{\R^n} e^{i\xi(x-y)} (1+|\xi|^2)^{-s} d\xi.
\end{equation}

For $s>n/2$, the Sobolev space $H^s(M)$, $M=\R^n$, is an RKHS, by Theorem \ref{rkhs}. 
Its reproducing kernel is given by \eqref{fte} where the right-hand side  now is well defined as a Lebesgue integral (since $(1+|\xi|^2)^{-s}$ is in $L^1(\R^n)$). In particular, by Lebesgue dominance, it defines a continuous function of $x,y \in \R^n$. Furthermore, passing to polar coordinates $\xi=r \om$, $r>0, \om \in S^{n-1}$, integration can be explicitely performed in terms of special functions. More precisely, integration over the unit sphere gives
\begin{equation}
K_s(x,y)= (2 \pi)^{- \frac{n}{2}} |x-y|^{\frac{2-n}{2}} \int_0^\infty 
\frac{r^{n/2}}{(1+r^2)^s}  J_{\frac{n}{2} -1}(r|x-y|) dr,
\end{equation}
where $J_\nu(z)$ denotes the Bessel function of the first kind. Then integration over $r$ yields
\begin{equation}
K_s(x,y)= \frac{2^{1-s-n/2}}{\pi^{n/2} \Gamma(s)} K_{\frac{n}{2} -s}(|x-y|) 
|x|^{s-\frac{n}{2}},
\end{equation}
where $K_\nu(z)$ is the modified Bessel function of the third kind (see \cite{emot} and \cite{as}). Relevant properties of $K_\nu(z)$  are listed in \cite{as} (or see  the standard reference \cite{o}). We remark that, for $s \leq n/2$, formula \eqref{fte} remains valid if it is interpreted as an oscillating integral (see \cite{hor}), or, more classically, by Abel integration (i.e. by inserting a convergence generating factor $e^{-\epsilon |\xi|^2}$ inside the integral and letting $\epsilon \downarrow 0$ {\em after} integration).  We leave it to the interested reader to work this out in detail. Here we just recall from \cite{as} that in the limiting case $s=n/2$ there is logarithmic divergence as $x \to y$, corresponding to the well known relation
\begin{equation}
K_0(z)=( \log z^{-1}))(1+o(1)), \qquad (z \to 0).
\end{equation}
We recall that the appearance of logarithmic terms is connected with the (strong) singularity of Bessel's equation in $z=0$.
As a special case, for $n=2,s=1$, one recovers the well known formulae for the logarithmic potential theory in $\R^2$ (see e.g. \cite{folland}).

Furthermore, we recall that a standard computation (using partial Fourier transform with respect to the space variable $x \in \R^n$ and solution of an ordinary differential equation with respect to $t \in \R$) gives the heat kernel  in the explicit form
\begin{equation}
e^{t \Delta}(x,y)= (4 \pi t)^{-n/2} e^{- \frac{|x-y|^2}{4t}},
\end{equation}
the so called Gaussian kernel.

%%%%%%%%%%%%%%%%%%%%%%%%%%%%%%%%%%%%%%%%%%%%%%%%%%%%%%5

\subsection{One dimensional compact submanifolds of $\R^d$}

%We recall that any connected one dimensional  submanifold $M$ of $\R^d$ (or any other manifold) is diffeomorphic either to the unit sphere $S^1$ (if $M$ is compact) 
%or to the real line $\R$ (if $M$ is not). This, in fact, 
In this section we analyze one dimensional compact submanifolds of $\R^d$ in more detail. 
We recall that 
%If $M$ is
  for any connected compact one-dimensional sub-manifold  $M$ of $\R^d$
  %, up to arescaling of the metric, 
  of length $2\pi$  there always exists an isometry $\Psi$ from the round circle
  $(S^1,g_c)$ onto $(M,g_M)$, where $g_c$ is the Riemannian metric on
  $S^1$ induced  by the embedding of $S^1$ into $\R^2$ and $g_M$ is  the
  Riemannian metric on  $M$ induced  by the embedding of $M$ into
  $\R^d$. Here,  {\it isometry} means that $\Psi$ is a diffeomorphism from
  $S^1$ onto $M$ such that $\Psi^*g_M= g_c$. This last condition is
  equivalent to the fact that in each point $x\in S^1$ the tangent map
  $\psi_*$ is a bijective isometry from $T_xS^1$ onto
  $T_{\Psi(x)}M$.

%\begin{figure}[h]
%\[
%\resizebox{6cm}{!}{
%\begin{xy}
%  \xymatrix{
%      (S^1,g_c)   \ar[d]^i \ar[r]_\Psi   &   (M,g_M/\ell_M^2)     \ar[d]^j  \\
%      \R^2            &   \R^d   
%  }
%\end{xy}
%}
%\]
%\caption{M compact}
%\end{figure}

\begin{proposition}\label{prop:equiv}
Any two one-dimensional connected compact Riemannian manifolds
$(M_j,g_j)$ are isometric (i.e. isomorphic as Riemannian manifolds) if and only if their total length (their Riemannian volume)
is equal.  In particular, any  compact one-dimensional sub-manifold $(M,g_M)$ in $\R^d$ of length 
$2\pi$ is isometric to the round sphere $(S^1, g_c)$.
If $\Psi:S^1 \to M$ denotes this isometry, then

\begin{enumerate}
\item the Riemannian measure $g_M$ is the push-forward of the
    Riemannian measure $g_c$ on $S^1$, {\it i.e.} $$\Psi_*(g_c)=g_M \;,$$ 
\item the linear map
    \[ \Psi^*=U: L^2(M,g_M)\to L^2(S^1,g_c)\qquad f\mapsto f\circ\Psi \] is a
    unitary operator, 
\item the corresponding Laplacians  are unitarily equivalent, {\it i.e.} $\Delta_M = U^* \Delta_{S^1} U$.
\end{enumerate}
\end{proposition}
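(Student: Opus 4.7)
The plan is to first construct the isometry $\Psi:(S^1,g_c)\to(M,g_M)$ via arc-length parametrisation, and then to read the three enumerated statements off from general properties of Riemannian isometries. Fix a point $m_0\in M$ and a unit tangent vector $v\in T_{m_0}M$. Since $M$ is complete, the unit-speed geodesic $\gamma:\R\to M$ with $\gamma(0)=m_0$, $\gamma'(0)=v$ is defined on all of $\R$. Because $M$ is one-dimensional no branching can occur, so $\gamma$ is locally an isometric embedding; compactness and connectedness then force $\gamma$ to be periodic with minimal period equal to the total length $L$ of $M$, and the induced map $\R/L\mathbb{Z}\to M$ is a global Riemannian isometry. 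Applying this to any two $1$-dimensional compact connected Riemannian manifolds of the same length shows they are mutually isometric; specialising to $L=2\pi$ and identifying $\R/2\pi\mathbb{Z}$ with $(S^1,g_c)$ yields the desired $\Psi$.

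For item~1, the defining relation $\Psi^*g_M=g_c$ implies the equality of Riemannian volume forms $\Psi^*\,d\mathrm{vol}_{g_M}=d\mathrm{vol}_{g_c}$. Integrating against Borel sets gives $g_M(\Psi(A))=g_c(A)$ for every Borel $A\subset S^1$, which is exactly $\Psi_*(g_c)=g_M$. Item~2 then follows immediately by change of variables: for $f\in L^2(M,g_M)$,
\[
\norm{Uf}^2_{L^2(S^1,g_c)}=\int_{S^1}\abs{f\circ\Psi}^2\,dg_c=\int_M\abs{f}^2\,d\Psi_*(g_c)=\int_M\abs{f}^2\,dg_M=\norm{f}^2_{L^2(M,g_M)},
\]
so $U$ is isometric; since $\Psi$ is a bijection, $U$ is surjective with inverse $g\mapsto g\circ\Psi^{-1}$, hence unitary.

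For item~3, because the Laplace--Beltrami operator is built purely out of the Riemannian metric, any Riemannian isometry intertwines the associated Laplacians: for every $f\in C^\infty(M)$,
\[
\Delta_{S^1}(f\circ\Psi)=(\Delta_M f)\circ\Psi,
\]
that is, $\Delta_{S^1}U=U\Delta_M$ on $\mathcal D(M)$. Combining with the unitarity of $U$ gives $\Delta_M=U^*\Delta_{S^1}U$ on smooth compactly supported functions, and by the essential self-adjointness statement of Theorem~\ref{esssad} this identity extends to the unique self-adjoint closures on both sides.

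The main obstacle is the geometric Step~1: justifying rigorously that a unit-speed geodesic on a compact connected $1$-manifold is globally periodic with minimal period equal to the Riemannian length of the manifold. One has to invoke the classification of compact connected $1$-manifolds (or, more concretely, exploit the absence of branching in dimension one together with a first-return argument based on compactness) to rule out both earlier self-intersections of $\gamma$ and a possible failure of surjectivity onto $M$. Once this geodesic is in hand, items~1--3 are standard functorial consequences of the definition of a Riemannian isometry.
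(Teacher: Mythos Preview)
Your proof is correct and follows essentially the same idea as the paper: both construct the isometry $\Psi$ via arc-length (unit-speed) parametrisation, you by following a unit-speed geodesic and invoking compactness/classification to obtain periodicity with period $L$, the paper by starting from a given diffeomorphism $i:S^1\to M$ and reparametrising it by the arc-length function $s(\theta)=\int_{\theta_0}^\theta\lVert (j\circ i\circ x)'\rVert\,d\theta'$ to obtain a unit-speed chart $\varphi$. The paper explicitly leaves items~1--3 as ``standard arguments\ldots left to the reader,'' whereas you spell them out via naturality of the volume form, change of variables, and the intertwining property $\Delta_{S^1}(f\circ\Psi)=(\Delta_M f)\circ\Psi$ together with essential self-adjointness; this is a welcome addition rather than a deviation.
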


\begin{proof}
First we recall that $M_1$ and $M_2$ are diffeomorphic.
This is a special case of a more general result in differential geometry: 
If an $d$-dimensional manifold $M$ carries $d$ commuting vector fields, linearly independent at each point of $M$ and having flows defined for all times 
(which is automatic if $M$ is compact), then $M$ is diffeomorphic to a product $S^k \times \R^{d-k}$ of a $k$-dimensional torus and a $d-k$-dimensional 
Euclidean plane, for some $k \in \{0, \cdots,d\}$. The diffeomorphism is basically given by the group action induced by the $d$ commuting flows, namely 
$$\R^d \ni t \mapsto g^t(m)\;,$$ 
where 
$$g^t=g_1^{t_1} \circ \cdots g^{t_d}_d:M \to M$$ is composition of the commuting flows $g^{t_j}_j$ corresponding to the 
commuting vector fields  and $m \in M$ is an arbitrary reference point (see e.g. \cite{arnold}).   Thus, for $d=1$, it suffices to pick on $M$ a 
non-vanishing
velocity field. If this vector field is chosen of unit length at each point (obtained by normalizing the field at each point), the associated diffeomorphism actually is a diffeomorphism between and $M$ and the  round sphere of length $\ell_M$ of $M$, proving our claim.

%(if $M$ is non-compact with flow defined globally in time).
For the sake of the reader we shall prove this more explicitly by using a standard parametrization, restricting ourselves to the case of the unit sphere
$S^1$.  
Let $j:M\to\R^d$ be the embedding of $M$ into $\R^d$
and $i:S^1\to M$ be a given diffeomorphism,
% whose existence is ensured by thefact that $M$ is a connected compact onedimensional sub-manifold of$\R^d$.  
given e.g. by the first argument above. Furthermore, let $x$ be the  system of coordinates on
the open set  $S^1\setminus\{x_0\} $
 \[ I\ni \theta \mapsto (\cos\theta,\sin\theta)=x(\theta), \]
where $I=(\theta_0,\theta_0+2\pi)$  for some $\theta_0\in\R$ and 
$x_0=(\cos(\theta_0),\sin(\theta_0))$.  A simple computation shows
that
\begin{equation}
\label{eq:6}
g_c(\frac{d}{d\theta} , \frac{d}{d\theta} ) =
\scal{\frac{dx(\theta)}{d\theta}}{\frac{dx(\theta)}{d\theta}}_{\R^2}
=1 \qquad \theta\in I   
\end{equation}
where $\frac{d}{d\theta}$ denotes the corresponding canonical vector
field.  Then $\theta\mapsto i(x(\theta))$ is  system of
coordinates on the open set  $M\setminus \{ i(x_0)\}$. Set 
\[
s: I\to \R\qquad  s(\theta)= \int_{\theta_0}^{\theta} \nor{
   \frac{dj(i(x(\theta')))}{d\theta'}} d\theta',
\]
where $\theta\mapsto j(i(x(\theta)))$ is  smooth on the closed interval
$[\theta_0,\theta_0+2\pi]$.  Since
\[s(\theta)'= \nor{\frac{dj(i(x(\theta)))}{d\theta}} =
\sqrt{g_M(i_*(\frac{d}{d\theta}), i_*(\frac{d}{d\theta}))}>0, \]    
where $i_*(\frac{d}{d\theta})$ is the canonical vector field associated
with the system of coordinates $i(x(\theta))$, then
$\theta\mapsto s(\theta)$ is a positive change of
coordinates from $I$ into $(0,\ell_M)$, where
\[
\ell_M= \lim_{\theta\to \theta_0+2\pi^{-}} s(\theta)
\]
is the length of $M$ since
\[ [\theta_0,\theta_0+2\pi]\ni\theta \mapsto i(x(\theta))\in M\]
is a closed simple smooth curve with range $M$.  Possibly by rescaling the
metric $g_M$, we assume that $\ell_M=2\pi$, 
so that $s(I)=(0,2\pi)$. It follows that  
\[ \varphi:(0,2\pi)\to M \qquad \varphi(t)= i(x( s^{-1}(t))) \]
is  system of coordinates on the open set  $M\setminus \{ i(x_0)\}$  and
\begin{equation}
\sqrt{g_M(\frac{d}{dt}, \frac{d}{dt})} =
\nor{\frac{dj(\varphi(t))}{dt}}= 1 \qquad \forall t\in
(0,2\pi).\label{eq:15},
\end{equation}
where $\frac{d}{dt}$ is the canonical vector field associated
with the system of coordinates $\varphi$.

Define $\Psi:S^1\to M$ such that $\Psi(x_0)=i(x_0)$ and  if $x=x(\theta)\in
  S^1\setminus{x_0}$ with $\theta\in I$, as  
  \[ \Psi(x(\theta)) =\varphi(\theta-\theta_0),\]
which is by construction a diffeomorphism.    As a consequence of
$\eqref{eq:6}$ and $\eqref{eq:15}$, we get  that $\Psi^*(g_M)=g_c$,
  which proves that $\Psi$ is an isometry. 

Note that in general $i^*(g_M) \neq g_c$. This means that to identify
$S^1$ with $M$ there is the need to choose an appropriate system of
coordinates, namely the arc-length parametrization $\varphi$ (corresponding to a unit tangent vector field).
The rest of the proof follows standard arguments and is left to the reader.

\end{proof}

We remark that a non-compact connected one-dimensional manifold $M$ still carries a unit tangent field. 
Thus, if $M$ is embedded in $\R^d$ with metric induced by the ambient space, it is necessarily of infinite length (otherwise it has endpoints and the submanifold property breaks down at the endpoints). Thus it is isometric to the real line and its Laplacian is unitarily equivalent to the standard Laplacian in $\R$. This extends Proposition \ref{prop:equiv}
to the non-compact case.

%Justified by the above observations, 
Since the length of the sphere appears in the spectrum of the Laplacian just as a scaling factor, we may confine ourselves to considering only the case of the unit sphere  
\[M=S^1 = \set{x\in \R^2 \mid \nor{x}=1} \;. \]
By Theorem \ref{rkhs}, 
the Sobolev space $H^s(M)$ is an RKHS for any $s>1/2.$ Thus, in this case, Proposition \ref{kernelexpansion}  applies and gives 
absolute convergence in a pointwise sense of the expansion \eqref{eq:42} of the reproducing kernel $K_s$ in terms of the eigenfunctions $f_k$ of the 
Laplace-Beltrami operator on $M$. The above estimate \eqref{eq:42} on the convergence of the eigenfunction expansion is far from trivial as it 
automatically implies the pointwise absolute convergence for {\em any} compact one dimensional submanifold of $\R^d$. 
Applying the (suboptimal) estimate \eqref{eq:1} of Theorem \ref{sturm-liou}, for instance, only implies a 
bound 
$$O(\lambda_k (1 + \lambda_k)^{-s})$$ 
on the individual terms of the sum in \eqref{eq:42}, and this is 
quite far from giving convergence. 

However, analyzing the kernel 
%without much more detailed spectral estimates for the Laplacian on $S^1$ with respect to an arbitrary 
%Riemannian metric, it seems impossible to study the kernel in the limiting case
%$s=1/2$. 
for the round sphere $S^1$, with metric induced from the Euclidean metric in $\R^2$, 
can be explicitly performed by Fourier analysis. % and Abel summation of the (divergent) Fourier series. 
%We do not know an explicit calculation in the literature. This is certainly  easier than the more general case of the unit sphere treated next. 
In fact, the theory of the next section includes the case of the circle $S^1$ as a 
special case (provided, as remarked below, the Gegenbauer polynomial is replaced by the Chebyshev polynomial in all appropriate places).
For the sake of the reader, we shall here explicitly analyze the case of the round sphere $(S^1,g_c)$ by classical Fourier analysis.

We equip $M$  with the  system of coordinates 
 \[ I\ni \theta \mapsto (\cos\theta,\sin\theta)=x(\theta), \]
where $I\subset\R$ is any open interval of length $2\pi$. The corresponding  vector
field and one form  are denoted by $\frac{d}{d\theta}$ and
$d\theta$. Given a point $m_0=x(\theta_0)$ with $\theta_0\in I$, the map 
  \begin{equation}
 \R\ni v\mapsto v x'(\theta_0) \in \R^2\label{eq:3ex}
\end{equation}
identifies the tangent space $T_{m_0}(M)\subset \R^2$ with $\R$.
The Euclidean metric of $\R^2$ induces a Riemannian
structure on $M$ and the Riemannian tensor is
\[g_c =d\theta \otimes d\theta,\]
since  
\[
g_c(\frac{d}{d\theta} , \frac{d}{d\theta} ) =
\scal{\frac{dx(\theta)}{d\theta}}{\frac{dx(\theta)}{d\theta}}_{\R^2}
=1 \qquad \theta\in I.
  \]
We have thus explicitly checked that our coordinates actually give an isometry $x: \R/2\pi \mathbb Z \to S^1$ where by a usual abuse of notation we have identified the intervall $I$ with the manifold $\R/2 \pi \mathbb Z$.
By using the identification given by~\eqref{eq:3ex}, it is immediate to check that, given  $m_0=x(\theta_0)\in M$, the
exponential map at $m_0$ is 
\[
\exp_{m_0}:\R\to T_{m_0}(M) \qquad \exp_{m_0} (v)=x(\theta_0 + v) \qquad v\in\R,
\]
so that  the injective radius is $j(m)=\pi$ and $r_M=\pi$. 
The Riemannian volume is
\[
\int_M f(m) dm = \int_I f(x(\theta)) d\theta,
\]
where $d\theta$ is the Lebesgue measure of $I$, and   the Laplacian is
\[
\Delta f(x(\theta))= - \frac{d^2 f(x(\theta))}{d^2\theta}.
\]
For all $k\in \N$ and $i=1,2$, set $f_0,f_{k,i}:M\to \R$
\[
f_0(x(\theta)) = \frac{1}{\sqrt{2\pi}} \qquad f_{k,i}(x(\theta))=
\begin{cases}
  \frac{1}{\sqrt{\pi}} \cos(k \theta) & i=1 \\
   \frac{1}{\sqrt{\pi}}  \sin(k \theta) & i=2
\end{cases},
  \]
then $\set{f_0}\cup \set{f_{k,i}\mid k\in\N,
  i=1,2}$ is an orthonormal base of $L^2(M)$ of eigenvectors of $\Delta$
  \begin{equation}
  \Delta f_0 = 0 \qquad \Delta f_{k,i} = k^2 f_{k,i}\label{eq:4ex}
\end{equation}
and the eigenvalues of $\Delta$ are
\[
\lambda_0= 0 \qquad \lambda_{k,i}= k^2 .
  \]
Denote by $L^2(M)_0= \set{f_0}^\perp$ and $P$ the corresponding
orthogonal projection,
so that
\[L^2(M)= \R\set{f_0}\oplus L^2(M)_0\qquad   \Id=f_0\otimes f_0 \oplus P .\]
It follows that, given $s>0$, the operator  $\Delta^s$ leaves
invariant $L^2(M)_0$ and the restriction is  injective. We denote its bounded
inverse by $\Delta^{-s}$ and set
\[
A_s=f_0\otimes f_0 + P^*\Delta^{-s/2}P,
\]
where $P^*$ is the canonical isometry embedding $L^2(M)_0$ into
$L^2(M)$.  By~\eqref{eq:4ex},
  \begin{equation}
A_s = f_0\otimes f_0 +\sum_{k\geq 1,i=1,2} \frac{1}{k^{2s}}
f_{k,i}\otimes f_{k,i},\label{eq:5ex}
\end{equation}
where the convergence is in the strong operator topology.  It follows that 
$f\in H^s(M)$ if and only if there exists a (unique) $g\in L^2(M)$ such
that $ f=A_s g$. Furthermore,  $\nor{g}$ is equivalent to the Sobolev norms
$\nor{f}_{H^s,1}$, $\nor{f}_{H^s,2}$, $\nor{f}_{H^s,3}$, {\em i.e.} 
\[
H^s(M)=A_sL^2(M)\qquad  \nor{f}^2_{H^s}= \nor{g}^2=\scal{f}{f_{0}}^2+ \sum_{k\geq 1,i=1,2} k^{2s} \scal{f}{f_{k,i}}^2 <+\infty.
 \]
 By~\eqref{eq:5ex}, $A_s$ is a Hilbert Schmidt operator if and only if
 $s\geq 1/4$. Under this assumption $A_s$ is the integral operator  $L_{K_s}$
   \[
     A_s f(m) =L_{K_s}f(m)=\int_M K_s(m,m') f(m') \,dm' ,
   \]
The integral kernel is given by
   \begin{alignat}{1}
     \label{eq:2ex}
      K_s(m,m')& = f_0(m)f_0(m') + \frac{1}{\pi} \sum_{k\geq 1,i=1,2
      }\frac{1}{k^{2s}}  f_{k,i}(m)f_{k,i}(m')  \nonumber\\
      & = 1 + \frac{1}{\pi} \sum_{k\geq 1 } \frac{1}{k^{2s}}
     \cos(k(\theta-\theta')) \qquad m=x(\theta), m'=x(\theta').
   \end{alignat}
where the series converge  in $L^2(M\times M)$ and the second equality
is a consequence of 
\begin{equation}
  \label{eq:1ex}
  \sum_{i=1,2} f_{k,i}(x(\theta)) f_{k,i}(x(\theta')) = \cos(k\theta)
\cos(k\theta') + \sin(k\theta) \sin(k\theta')=\cos(k(\theta-\theta')).
\end{equation}
Furthermore if $s>1/2$, then $H^s(M)$ is a reproducing kernel Hilbert
space and the corresponding reproducing kernel is
 \[
K_s(m,m')= 1 + \frac{1}{\pi} \sum_{k\geq 1 } \frac{1}{k^{2s}}
     \cos(k(\theta-\theta')) \qquad m=x(\theta), m'=x(\theta'),
   \]
where the series  converges  normally. Note that $K_s$ is jointly continuous. 

 We now consider two cases. By a standard result on Fourier series,
 see 1.443.3 and 1.448.2 in \cite{grry07}
 \begin{subequations}
   \begin{alignat}{3}
     & \sum_{k\geq 1 } \frac{1}{k^{2}} \cos(k\theta ) \,&=&\,
     \frac{\theta^2}{4}-\frac{\pi}{2} \theta+ \frac{\pi^2}{6} \quad &&
     \quad \theta\in [0,2\pi] \label{eq:2exa}
     \\
     & \sum_{k\geq 1 } \frac{1}{k} \cos(k\theta ) \,&=& \,-\frac{1}{2}
     \ln(2(1-\cos \theta)) \quad &&
     \quad \theta\in (0,2\pi) \label{eq:2exb}
   \end{alignat}
 \end{subequations}
 where the series converge point-wisely. Hence,   given $
 m=x(\theta),m'=x(\theta' )\in M$ with $\theta'-\theta\in [0,2\pi)$,
\begin{alignat}{3}
   & K_1(m,m') \,&=&\, 1+ \frac{(\theta'-\theta)^2}{4\pi}-\frac{\theta'-\theta}{2}
   +
     \frac{\pi}{6}     \quad && \quad  \label{eq:2exc}
     \\
 & K_{1/2}(m,m') \,&=&\, 1-\frac{1}{2\pi}  \ln(2(1-\cos(\theta'-\theta)))
     \quad && \quad   \theta'\neq\theta.\label{eq:2exd}    
 \end{alignat}
 It is interesting to observe
 that  $L_{K_{1/2}}=A_{1/2}$ is a positive integral operator mapping $L^2(M)$ onto
 $H^1(M)\subset C(M)$, but its kernel $K_{1/2}$, which is defined and
 jointly  continuous on $M\times M\setminus\set{ (m,m)\mid m\in M}$, can not be
 extended to a kernel $K:M\times M\to\R$ of positive type. Indeed, assume by
 contradiction that there such a kernel. Setting $f:\R\to \R$ as 
 $f(\theta)=K(x(\theta), x(\theta))$,   since $K$ is of positive
 type, then for all $\theta \neq\theta'$
 \[
f(\theta) f(\theta')\geq \left( 1-\frac{1}{2\pi}  \ln(2(1-\cos(\theta'-\theta)))\right)^2.
 \]
Fix $\theta'=\theta_0\in \R,$ and  take the limit for $\theta$ going to
$\theta_0$, then 
 \[
\lim_{\theta\to \theta_0} f(\theta) =+\infty \qquad\forall\theta_0\in \R,
   \]
   which is impossible. Indeed, set $I_n= f^{-1}( (-\infty,n])$ with
   $n\in\mathbb Z$. If
   $I_n$ has a cluster point $\theta_0\in\R$, then
   $\lim\inf_{\theta\to\theta_0} f(\theta)\leq n$, which is
   impossible. Then $I_n$ is countable, but $\R= \cup_{n\in \mathbb
   Z} I_n$, which is impossible since $\R$ is not countable. 

 Note that $L_{K_{1/2}}= L_{K_1}^{1/2}$ and $L_{K_1}$ is an integral operator with
 a Mercer kernel. This provides an alternative counter-example to the
 construction provided in  \cite{zh02} about the existence of a 
 Mercer kernel $K$ such that  $L_{K}^{\frac{1}{2}}$ is an integral
 operator whose kernel is not of
 positive type. Furthermore, $L_{K_{1/2}}$ is a positive operator with range
 into $C(M)$, but its kernel is not of positive type.  Observe that,
 if the kernel of a positive integral operator is jointly continuous,
 then $K$ is of positive type by Theorem~2.3 in \cite{feme09}.

\subsection{The unit sphere $S^{d-1}$}
\paragraph{Basic Facts.}

For any $d\geq 3$ we denote by $M=\set{ x\in\R^d \mid \nor{x}=1}$ the unit sphere in $\R^d $, equipped with the Riemannian metric induced from the euclidean metric in the ambient space and the associated 
surface measure $d\sigma $. Then the Laplace-Beltrami operator $\Delta_M = \Delta$ is a classical differential operator which arises e.g.  by transforming unitarily the 
Laplace operator $-\Delta_{\R^d}=-\sum{ \partial_i^2} $ to polar coordinates: If
$$ {\cal U}: L^2(\R^d, dx) \to L^2(\R_+ \times M, dr d\sigma), \quad
{\cal U}f(r,m):= r^{\frac{d-1}{2}} f(r m) $$

denotes the unitary transformation to polar coordinates, one finds

\begin{equation}  \label{Delta}
{\cal U} (-\Delta_{\R^d}) {\cal U}^{-1} = -\partial^2_r + \frac{\Delta + \alpha}{r^2}, \qquad \alpha= \frac{1}{4}(d-1)(d-3).
\end{equation}
The eigenspace $H_\ell$ of $\Delta$  in $L^2(M)$ for the eigenvalue 
$\lam_\ell=\ell(\ell+d-2)$ is precisely given by the restrictions to the unit sphere of harmonic polynomials in $\R^d$, homogeneous of degree $\ell$, which commonly are 
called {\it spherical harmonics of degree $\ell$}.
The degree $\ell $ is a natural parameter for the eigenspace $H_\ell$ and the eigenvalue $\lam_\ell$. Via the relation \eqref{Delta} all the eigenvalues
$\lam_\ell$ and the dimensions $d_\ell= \dim H_\ell$ can be explicitly calculated using the euclidean Laplacian $-\Delta_{\R^d}$ in $\R^d$, completely avoiding the use of local 
coordinates for $M$. One finds

\begin{equation}  \label{dim}
d_\ell = (2 \ell + d- 2) \frac{\Gamma(\ell + d-2)}
{\Gamma(d-1) \Gamma(\ell +1)},
\end{equation}

which we consider as a meromorphic function of the complex parameter $\ell \in \C$ in view of the basic properties of the Gamma-function $\Gamma(z)$.
Also note for further use that the eigenvalues $\lam_\ell$ are analytic functions of $\ell$. These analyticity properties will be important for our analysis. 
For completeness sake we recall that in view of these formulae $\Delta + \alpha$
on $H_\ell$ is multiplication by

\begin{equation} \label{cent}
\la^2 - \frac{1}{4}, \qquad \la= \ell + \nu, \quad \nu=\frac{d}{2} -1,
\end{equation}

which gives the %\todo{ 
form of the  centrifugal barrier in formula \eqref{Delta}. 
For these and most of the subsequent formulae we refer to any good book on PDE 
like e.g. \cite{folland} for basics, the old group theoretic treatment in \cite{bmp1,bmp2} and in particular the review in \cite{ak} which we shall largely follow in spirit. 
We also mention the work \cite{kalf95} which treats the expansion of a differentiable function on the sphere in terms of spherical harmonics. The main difference to our  approach is that it is essentially {\em real} (both
the parameters $\ell$ and $\inner{m, m'} = -\cos \theta$ take exclusively real values) while our approach is essentially complex, using analyticity.
Roughly speaking, the real approach is fine to treat absolutely and uniformly converging series, while a complex approach seems much better adapted to handle divergent series by Abel summation and to treat kernels with singularities.
\\\\
We choose a real orthonormal basis of spherical harmonics $f_{\ell,k}$, with $1 \leq k \leq d_\ell,$ of $H_\ell$. Then the addition formula for spherical harmonics  expresses 
the orthogonal projection $\Pi_\ell$ on $H_\ell$ in the Hilbert space ${\cal H}=L^2(M,d\sigma)$, for $d \geq 3$, in terms of  the kernel
\begin{equation}  \label{addition}
\Pi_\ell(m,m') = \sum_{k=1}^{d_\ell} f_{\ell,k}(m) f_{\ell,k}(m')=
\frac{d_\ell}{\om_d} B^\nu_\ell(\inner{m ,m'}), \qquad 
B^\nu_\ell(z)= \frac{C^\nu_\ell(z)}{C^\nu_\ell(1)}, \quad 
\nu=\frac{d}{2} -1,
\end{equation}
$$ (\Pi_\ell f)(m)= \int_{S^{d-1}} \Pi_\ell(m,m') f(m') d\sigma(m'),$$
where
$$ \om_d= \mbox{vol} (S^{d-1}) = 2 \pi^{d/2} \Gamma \paren{\frac{d}{2}}^{-1}  $$
is the volume of the sphere $M$, and the Gegenbauer (or ultra-spherical) polynomial $C^\nu_\ell(z)$ is defined for $\nu >0$ by use of its generating function through the identity

\begin{equation}
(1-2zt + t^2)^{-\nu} =: \sum_{\ell=0}^\infty C^\nu_\ell(z) t^\ell.
\end{equation}

We refer to $B^\nu_\ell(z)$ as the normalized Gegenbauer polynomial. It is expressed in terms of the hypergeometric function $F(a,b,c;t)$ as

\begin{equation} \label{gegenb}
B^\nu_\ell(z)=F(\ell+2\nu, -\ell,\nu+1/2; \frac{1-z}{2}).
\end{equation}

The rhs of \eqref{gegenb} extends as an analytic function to any $\ell \in \C$ and $\nu \geq 0$, and henceforth we shall denote by $B^\nu_\ell(z)$ this extension provided 
by the hypergeometric function. In particular one obtains

\begin{equation}
B^0_\ell(z)=T_\ell(z), \qquad (\ell \in \N),
\end{equation}

where $T_\ell(z)$ is the Chebyshev polynomial $T_\ell(\cos \theta)=\cos(\ell \theta).$ With this definition of $B^0_\ell(z)$ the addition 
theorem \eqref{addition} also holds in dimension $d=2$.

The normalized Gegenbauer polynomial verifies

\begin{equation}
B^\nu_\ell(z)= (-1)^\ell  B^\nu_\ell(-z), \qquad |B^\nu_\ell(z)| \leq 1 \;,\quad -1 \leq z \leq 1.
\end{equation}

Now, using an expression of the associated Legendre function $P^\mu_\la(z)$ for $-1<z<1$ in terms of the hypergeometric function and one of the transformation 
identities for $F(a,b,c;t)$ one obtains, for $\ell \in \N$ and $\nu \geq 0$,

\begin{equation}
B^\nu_\ell(z)= 2^{- \mu} \Gamma(1-\mu) (1-z^2)^{\mu/2} P^\mu_\la(z)\;, \qquad
\mu= \frac{1}{2} - \nu\;,  \quad \la=\ell + \nu -\frac{1}{2} \;. 
\end{equation}

Then an integral representation of $P^\mu_\la(z)$ can be used to finally obtain the following integral representation of the normalized Gegenbauer polynomial

\begin{equation}    \label{intrep}
B^\nu_\ell(\cos \theta) = 2^\nu \frac{\Gamma(\nu +\frac{1}{2})}{\Gamma(\nu)
\Gamma(\frac{1}{2})} (\sin \theta)^{1-2\nu} \int_0^\theta 
\cos(\ell + \nu) \phi \left ( \cos \phi - \cos \theta \right )^{\nu -1} 
d\phi,
\end{equation}

valid for $ \ell \in \C, \nu \geq 0$ and $ 0 <\theta < \pi$, see \cite{ak}. Although the derivation of \eqref{intrep} is classical (based on \cite{bmp1, bmp2}) it is 
possibly not a well known formula, at least compared to the basic identities for the Gegenbauer polynomial which appear in many textbooks.

\paragraph{Analyzing the kernel of $(1+\Delta^s)^{-1}$.}
With these preparations, it is possible to analyze the kernel $K_s(m,m')$ of $(1+\Delta^s)^{-1}$ by explicit computation. At least formally, one has

\begin{equation}   
\label{eq:kernelKs}
K_s(m,m')= \sum_{\ell=0}^\infty (1+\lam^s_\ell)^{-1} \Pi_\ell(m,m').
\end{equation}

Thus, using the asymptotic relation $d_\ell=O(\ell^{d-2})$ and 
$\lam_\ell=O(\ell^2)$ as $\ell \to \infty$, one obtains for the summands on the rhs of \eqref{eq:kernelKs} the estimate
$$ O(l^{-2s+d-2})=O(\ell^{-1-2\epsilon}),   \qquad \mbox{for  }  s=\frac{d-1}{2} + \epsilon, $$
which proves convergence of the expansion \eqref{eq:kernelKs} for $s> \frac{d-1}{2}$ as predicted by our 
Proposition \ref{kernelexpansion}, which  for $s> \frac{d-1}{2}$  identifies the formal expansion with the kernel of $(1+\Delta^s)^{-1}$.
%our general theory. 
%\todo{
%The identification of the kernel defined by \eqref{kernelKs} with the kernel of the operator $(1+\Delta^s)^{-1}$ is then straightforward. In fact, denoting by $K_s$ the integral operator associated to the kernel $K_s(m,m')$ and by ${\cal D}$ the dense subspace of finite linear combinations of spherical harmonics, it follows from the spectral 
%theorem that $(1+\Delta^s)^{-1}f = K_s f$ for $f \in {\cal D}$, and approximating $f \in L^2(M,d\sigma)$ by a sequence $f_n \in {\cal D}$ proves our claim.

Noting that $B^\nu_\ell(1)=1$, we also obtain from \eqref{eq:kernelKs} that the kernel $K_s(m,m')$ 
diverges on the diagonal for $s \leq \frac{d-1}{2}$, tending to $\infty$. 
\\
\\

\noindent
We shall now show that in the complementary case $0 < s \leq \frac{d-1}{2}$ Abel summation of 
the then divergent sum in \eqref{eq:kernelKs}, combined with the integral representation \eqref{intrep}, can be used to show 
that the distributional kernel 
$K_s(m,m')$ of $(1+\Delta^s)^{-1}$ is smooth (in fact real analytic) away from the diagonal and to bound the singularity on the diagonal $m=m'$.
Technically, the crucial point is to realize the individual terms in the sum on the rhs of \eqref{eq:kernelKs} as residues of an 
appropriate meromorphic function, allowing to rewrite the sum as a contour integral in the complex $\ell-$ plane. 
This so called Sommerfeld-Watson transformation has been popular in the physics literature for analyzing the partial wave expansion in 
dimension $d=3$, see e.g. \cite{ne} and \cite{so}. 
%A. Sommerfeld, Partielle Differentialgleichungen der Physik. Leipzig 1945, Harri Deutsch 1992.
 Using the formulae of this section, the method also works in general, i.e. for all $d \geq 2$. 
We have

\begin{thm}
For $0 < s \leq \frac{d-1}{2}$, and $m \neq m'$,  the distributional kernel $K_s(m,m')$ of $(1+\Delta^s)^{-1}$ is given by Abel summation of \eqref{eq:kernelKs}, i.e.

\begin{equation}  \label{abel}
K_s(m,m') = \lim_{t \uparrow 1} K_{s,t}(m,m'),  \quad
K_{s,t}(m,m')= \sum_{\ell=0}^\infty (1+\lam^s_\ell)^{-1} t^\ell \Pi_\ell(m,m').
\end{equation}
It is real analytic in this region and satisfies the bound
\begin{equation} \label{bound}
|K_s(m,m')| =O(|m - m'|^{2s - d+1})\;, \quad 2s <d-1 %\;, \qquad (m \to m'),
\end{equation}
as $m \to m'$, while for $2s =d-1$ the rhs is replaced by $O(|\log|m -m'||)$.
\end{thm}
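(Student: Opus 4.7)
My plan is to first verify that the Abel sum on the right of \eqref{abel} gives the distributional kernel of $(1+\Delta^s)^{-1}$, then use a Sommerfeld--Watson contour integral representation combined with the integral formula \eqref{intrep} to establish convergence and analyticity away from the diagonal and to extract the singularity as $m\to m'$.

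For each $t\in[0,1)$, the series $K_{s,t}(m,m')=\sum_\ell(1+\lambda_\ell^s)^{-1}t^\ell\Pi_\ell(m,m')$ converges absolutely and uniformly on $M\times M$, since $|\Pi_\ell(m,m')|\leq d_\ell/\omega_d=O(\ell^{d-2})$ by $|B^\nu_\ell|\leq 1$, and is dominated termwise by $t^\ell$ times polynomial growth. It is the integral kernel of the bounded operator $P_t=\sum_\ell t^\ell(1+\lambda_\ell^s)^{-1}\Pi_\ell$ on $L^2(M)$, and $P_t\to(\Id+\Delta^s)^{-1}$ in the strong operator topology as $t\uparrow 1$ (dominated convergence for the spectral resolution). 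Consequently $K_{s,t}\to K_s$ in $\mathcal D'(M\times M)$, so it suffices to show the stated pointwise limit off the diagonal and to identify the singularity along it.

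For the Sommerfeld--Watson step, the summand
\[
F_s(\ell,z,t):=\frac{d_\ell}{\omega_d(1+\lambda_\ell^s)}B^\nu_\ell(z)\,t^\ell
\]
extends meromorphically in $\ell\in\C$ via the explicit formulae \eqref{dim} for $d_\ell$, the polynomial expression $\lambda_\ell=\ell(\ell+d-2)$, and the hypergeometric representation \eqref{gegenb} of $B^\nu_\ell$. Using $\mathrm{Res}_{\ell=k}\,1/\sin(\pi\ell)=(-1)^k/\pi$, the identity
\[
\sum_{\ell=0}^\infty F_s(\ell,z,t)=-\frac{1}{2i}\oint_\Gamma \frac{e^{i\pi\ell}\,F_s(\ell,z,t)}{\sin(\pi\ell)}\,d\ell
\]
holds, with $\Gamma$ a contour enclosing the non-negative integers. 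I would then deform $\Gamma$ to a vertical line $\mathrm{Re}\,\ell=c$ with $c\in(-1,0)$, picking up the contributions from the poles of $1/(1+\lambda_\ell^s)$, which lie in a bounded region of the complex plane depending only on $s$ and $d$ and yield a contribution analytic in $z=\langle m,m'\rangle$.

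To pass to the limit $t\uparrow 1$ and obtain the singularity bound, I substitute \eqref{intrep}, valid for complex $\ell$ and $0<\theta<\pi$, into the contour integral and swap the $\ell$ and $\phi$ integrations. On the vertical line, $\cos((\ell+\nu)\phi)$ grows like $e^{\phi|\mathrm{Im}\,\ell|}$ while $1/\sin(\pi\ell)$ decays like $e^{-\pi|\mathrm{Im}\,\ell|}$, giving net exponential decay $e^{-(\pi-\phi)|\mathrm{Im}\,\ell|}$ for $\phi\in(0,\theta)\subset(0,\pi)$. Combined with the polynomial factor $d_\ell/(1+\lambda_\ell^s)$, this yields absolute convergence uniformly in $t\in[0,1]$, establishing \eqref{abel} and real analyticity of $K_s$ away from the diagonal via the analytic dependence on $\theta=\arccos\langle m,m'\rangle$. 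The bound \eqref{bound} then follows from the classical Abelian theorem for Dirichlet-type series with coefficients of order $\ell^{d-2-2s}$: the Abel-summed function of $\phi$ behaves as $\phi^{2s-d+1}$ (respectively $\log\phi$ in the critical case $2s=d-1$), and after integration against $(\cos\phi-\cos\theta)^{\nu-1}$ together with the prefactor $(\sin\theta)^{1-2\nu}$, one recovers $|K_s(m,m')|=O(\theta^{2s-d+1})=O(|m-m'|^{2s-d+1})$.

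The hard part is the control of the Sommerfeld--Watson contour at infinity---verifying that the closing arcs contribute nothing and that all relevant poles of $1/(1+\lambda_\ell^s)$ are correctly accounted for. Once the contour representation is rigorously established, analyticity off the diagonal and the power/logarithmic behavior on it are essentially bookkeeping in the explicit integral.
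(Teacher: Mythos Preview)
Your overall architecture matches the paper's Sommerfeld--Watson approach, but there is a real gap in the contour step. To get residues equal to $F_s(k,z,t)$ (without alternating signs) you inserted the factor $e^{i\pi\ell}$; however your subsequent decay estimate drops it and only quotes the decay of $1/\sin(\pi\ell)$. If you keep the factor, then on the vertical line with $\operatorname{Im}\ell=\eta$ one has $|e^{i\pi\ell}/\sin(\pi\ell)|\sim e^{-(2\pi)\eta}$ for $\eta>0$ but $|e^{i\pi\ell}/\sin(\pi\ell)|=O(1)$ for $\eta<0$. Combined with $|\cos((\ell+\nu)\phi)|\sim e^{\phi|\eta|}$, the integrand grows exponentially in the lower half plane and the contour integral diverges. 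Thus neither the deformation to a vertical line nor the limit $t\uparrow 1$ is justified as written.

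The paper's way out is not a cosmetic fix: it uses $1/\sin(\pi\ell)$ alone (which does decay like $e^{-\pi|\eta|}$ in \emph{both} half planes) and absorbs the resulting $(-1)^\ell$ via the parity identity $B^\nu_\ell(z)=(-1)^\ell B^\nu_\ell(-z)$, so that the integrand contains $B^\nu_\ell(-m\cdot m')$ rather than $B^\nu_\ell(m\cdot m')$. This forces one to set $\cos\theta=-m\cdot m'$, so the diagonal $m=m'$ corresponds to $\theta\to\pi$, and the net decay $e^{-(\pi-\theta)|\eta|}$ degenerates exactly there, which is precisely what produces the singularity \eqref{bound}. Your final paragraph, which extracts a $\phi^{2s-d+1}$ behaviour near $\phi=0$ via an ``Abelian theorem'', is looking at the wrong endpoint; once you use the reflected variable, the singularity analysis becomes the explicit estimate of $\int_0^\theta(\pi-\phi)^{2s-d+1}(\cos\phi-\cos\theta)^{\nu-1}d\phi$ as $\theta\to\pi$, as the paper does (and, as the paper also notes, taking absolute values inside this integral is only sharp for $d=3$).
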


%We remark that our method of proof could also be refined to show that the above estimates are sharp (by deriving lower bounds of the same order). This is needed to complete our argument that the borderline $2s=d-1$ actually  marks the transition from a bounded to an unbounded kernel.

\begin{proof}
We give a complete proof only for $d=3$ and indicate the additional work needed for the general case.
Writing 
\[ a_\ell(s;m,m')=(1+\lam_\ell^s)^{-1}\Pi_\ell (m,m') \;,\]
and using Lemma 6.1 from \cite{ak}, we find that this is bounded by  
$O( \ell^{-2s + d-2})$ for $d=2$ and $d \geq 3, m= \pm m',$  and (since $d-2=2 \nu$) by $O(\ell^{-2s + \nu}), \nu=\frac{d}{2} -1,$  for $d \geq 3, m \neq \pm m'$, respectively. 

In particular, the power series  in \eqref{abel} defining $K_{s,t}(m,m')$ converges for $t<1.$ We shall now show that $t=1$ is a regular point of this power series  
by rewriting it as a contour integral, using the residue theorem. From this we shall prove Abel summability.
Observe 
$$ \mbox{Res} |_{t=\ell} \left( \frac{1}{\sin \pi t} \right) = \frac{1}{\pi}  (-1)^\ell $$
and the estimate
\begin{equation}  
\label{eq:lemmagb}
|B_\ell^\nu(\cos \theta)| \leq C_{\nu,\epsilon} e^{| \Ima \ell| \theta}\;, \qquad 
\ell \in \C\;, 0< \nu \;, 0 \leq \theta \leq \pi - \epsilon \;, 0< \epsilon<\frac{\pi}{2}\;, 
\end{equation}
see Lemma 6.1 in \cite{ak}. We claim
% using \eqref{abel} and the residue theorem,
\begin{equation}  
\label{eq:intrepabel}
K_{s,t}(m,m')= \frac{1}{2i \omega_d} \int_\Gamma (1 + \lam_\ell^s)^{-1} \frac{t^\ell d_\ell}{\sin \pi \ell} B^\nu_\ell(-m \cdot m') d\ell \;, \qquad m \neq m' \;.
\end{equation}
Here 
%we have set $- m \cdot m' = \cos \theta$, with $ 0 \leq \theta  \leq \pi - \epsilon$ for some $\epsilon > 0$, and 
$\Gamma$ is the complex contour consisting of the imaginary axis for $|\ell | \geq 1/2$ and the half-circle $|\ell| =1/2$, $\Real \ell < 0$, traversed from
$-i \infty$ to $i \infty$. 
%thus containing all poles $\ell \in \N$ in its interior. 
To prove \eqref{eq:intrepabel}, we denote by $\gamma_M$ the complex contour consisting of the half circle in the right half-plane $\Real \ell > 0$ of radius
$M+1/2$ for $M \in \N$. Then, using \eqref{eq:lemmagb} and the above bound for $d_\ell$, we find, 
setting $- m \cdot m' = \cos \theta$, with $ 0 \leq \theta  \leq \pi - \epsilon$ for some $\epsilon > 0$,
\begin{align*}
\int_{\gamma_M} (1 + \lam_\ell^s)^{-1} \frac{t^\ell d_\ell}{\sin \pi \ell} B^\nu_\ell(-m \cdot m') d\ell &=
\int_{\gamma_M} t^{\Real \ell} O(M^{d-2}) O(e^{-|\Ima \ell|(\pi - \theta)}) d|\ell| \\
&=
O(M^{d-1} t^M + M e^{- \epsilon M}) \;,
\end{align*}
which is $o(1)$ as $M \to \infty$, for any $t<1$. Thus \eqref{eq:intrepabel} follows by applying the residue theorem to the region bounded by $\Gamma$ and $\gamma_M$ and letting $M$ tend to infinity.
By \eqref{eq:lemmagb}, the integral in \eqref{eq:intrepabel} converges up to $t=1$. Thus the formal series %defining $K_s(m,m')$ 
in \eqref{eq:kernelKs}  is indeed Abel summable for $m \neq m'$.

 Furthermore, standard arguments give the first equality in \eqref{abel}, i.e. the identification of the limit   $ \lim_{t \uparrow 1} K_{s,t}(m,m')$ with the distributional 
kernel $T(m,m'):=K_s(m, m')$ of $T:= (1+\Delta^s)^{-1}$.
%Denoting by $K_s$ the integral operator associated to the kernel $K_s(mm')$ and by ${\cal D}$ the dense subspace of finite linear combinations of spherical harmonics, it follows from the spectral theorem that 
%$(1+\Delta^s)^{-1}f = K_s f$ for $f \in {\cal D}$, and approximating $f \in %L^2(M,d\sigma)$ by a sequence $f_n \in {\cal D}$ proves our claim.
In fact, the bounded operators $T= \sum_0^\infty a_\ell \Pi_\ell, 
T_t:= \sum_0^\infty t^\ell a_\ell \Pi_\ell$ are both limits in operator norm
of their partial sums $S_n, S_{t,n}$ with smooth kernels  $S_n(m,m')$ and
$S_{t,n}(m,m')$, respectively. Since $\lim_{t\uparrow 1}T_t=T$ in operator norm, we in particular have $\lim_{t\uparrow 1}\langle u,T_t v \rangle= \langle u,T v \rangle $ for $u,v \in {\cal D}={\cal C}^\infty(S^{d-1})$. % which we equip with it
 Using the habitual abuse of notation, the (smooth) kernels $S_n(m,m')$ and
$S_{t,n}(m,m')$ converge in the sense of distributions (i.e. in the usual weak topology of continuous linear functionals 
${\cal D} \times {\cal D} \to \C,$ where ${\cal D}$ is equipped with its natural Frechet topology) to the distributional kernels $T(m,m')$ and $T_t(m,m')$, respectively. In our only very mildly singular case the former is a distribution of order zero, while the latter is smooth (for $t<0$). 
Also
\begin{equation}  
\label{eq:limdistr}
\lim_{t \uparrow 1} T_t(m,m')=T(m,m')
\end{equation}
in the sense of distributions (since $T_t \to T$ in operator norm). In addition, since for $v \in {\cal D}$ the $L^2-$ norm $||\Pi_\ell v||$ decays faster than any polynomial, we 
obtain $\Lambda^j T v\ \in L^2(S^{d-1})$ for all $j \in \N$. Thus, by Sobolev embedding, $T$ may be viewed as a continuous map
${\cal D} \to {\cal D}$. Consequently, we may fix $m \in S^{d-1}$ and still obtain \eqref{eq:limdistr}, now in the sense of distributions on $S^{d-1}$.
Thus, even for fixed $m \in S^{d-1}$, we may represent (as a distribution in ${\cal D}^\prime  (S^{d-1}$)) the kernel  as

\begin{equation}
T(m,m')= \lim_{t \uparrow 1} T_t(m,m')= \lim_{t \uparrow 1} K_{s,t}(m,m'),
\end{equation}
where $K_{s,t}(m,m')$ is given by \eqref{eq:intrepabel} with the integral on the rhs taken in weak sense, i.e. applied to $v \in {\cal D}$ 
(this requires redoing the residue argument for $T_t v$). For $m \neq m'$, this integral has a pointwise sense by the above result on 
Abel summability, and this finally identifies, for $m \neq m'$, $\lim_{t \uparrow 1} K_{s,t}(m,m')$ with the distributional kernel 
of $T$. We also have obtained the representation
\begin{equation}  \label{intrepKs}
K_{s}(m,m')= \frac{1}{2i \omega_d} \int_\Gamma (1 + \lam_\ell^s)^{-1} \frac{ d_\ell}{\sin \pi \ell} B^\nu_\ell(\cos \theta) d\ell \qquad(-m \cdot m'=\cos \theta \in [0,\pi)),
\end{equation}
where $B^\nu_\ell(\cos \theta)$ is given by \eqref{intrep}.
Using Fubini and interchanging the order of integration  gives the estimate

\begin{equation} 
\label{eq:est}
|K_{s}(m,m')| \leq C | \sin \theta |^{1-2\nu} \int_0^\theta I(\phi)
 \phi \left ( \cos \phi - \cos \theta \right )^{\nu -1} 
d\phi,
\end{equation}
where

\begin{equation} 
\label{eq:I}
I(\phi)=   \int_\Gamma |1 + \lam_\ell^s|^{-1} |d_\ell | \; \left|\frac{ \cos(\ell + \nu)\phi }{\sin \pi \ell}\right| \; |d\ell|   
 , \qquad C=\left|\frac{2^{\nu-1}}{ \omega_d}  \frac{\Gamma(\nu +\frac{1}{2})}{\Gamma(\nu)
\Gamma(\frac{1}{2})} \right|.
\end{equation}

We remark that this estimate, with absolute value taken inside the integral, is  sufficiently sharp only in dimension $d=3$ (where $\nu = 1/2$ 
and the prefactor in \eqref{eq:est} may be omitted). In the general case one may still apply Fubini, but one needs to 
carefully take into account oscillations in the integrand to improve the estimate. We leave this to the interested reader.

Now observe that for $\gamma +1>0$ and $\beta
>0$ one has

\begin{equation}  
\label{eq:elementary}
\int_0^\infty x^\gamma e^{-\beta x} dx=\beta^{-\gamma-1} \int_0^\infty t^\gamma e^{-t} dt =O(\beta^{-\gamma-1}) \qquad (\beta \downarrow 0).
\end{equation}

Thus, using the usual bounds in the integrand in \eqref{eq:I} and setting
$\gamma=d-2-2s$ (corresponding to $-\gamma-1=2s+1-d=:\alpha$) we get
$1-d<\alpha \leq 0$ for our parameter range $0<s\leq \frac{d-1}{2}$, and combining \eqref{eq:est}, \eqref{eq:I}, \eqref{eq:elementary} gives

\begin{equation}  
\label{estim}
|K_{s}(m,m')| \leq C |(\sin \theta)|^{1-2\nu} J(\theta), \qquad
J(\theta)=\int_0^\theta (\pi-\phi)^\alpha
  \left ( \cos \phi - \cos \theta \right )^{\nu -1} 
d\phi,
\end{equation}

for some constant $C<\infty$.
Clearly it suffices to estimate $J(\theta)$ for $\theta \uparrow \pi$, and we 
may, up to an irrelevant additive constant, replace the lower bound $0$ in the integral defining $J(\theta)$ by $\pi/2$.
Using the elementary trigonometric identity
\[
 \cos \phi - \cos \theta = 2\sin \paren{\frac{\phi - \theta}{2}} \sin \paren{\frac{\phi + \theta}{2}}\;, \qquad \frac{\pi}{2} \leq \phi \leq \theta \leq \pi  \]
and setting $s:=\pi - \phi \geq t:=\pi - \theta$, we obtain

\begin{equation}  
\label{eq:It}
J(\theta) \leq C \int _t^{\pi/2} s^\alpha (s^2-t^2)^{\nu-1} ds. %I(t), \qquad I(t)= \int _t^{\pi/2} s^\alpha (s^2-t^2)^{\nu-1} ds.
\end{equation}

Now we first estimate in the non-critical case $2s < d-1=2$, emphasizing once again that the estimate in \eqref{eq:est} is only % approach only works 
sharp in dimension 3. Then $\nu -1=-1/2$ and \eqref{eq:It} gives

\begin{equation}  
\label{eq:iIt}
I(t)= \int _t^{\pi/2} s^{\alpha -1} \paren{1-\paren{t/s}^2}^{\nu-1} \;ds  \;.
\end{equation}
Now substitute $x=t/s$ and observe $ds/s= - dx/x$ to get

\begin{equation} 
\label{eq:Itfinal}                          
I(t)= t^\alpha \int_{\frac{2t}{\pi}}^1 x^{-\alpha-1} (1-x^2)^{-\frac{1}{2}} dx = O(1) t^\alpha,
\end{equation}
where we split the range of integration into $J_1=[\frac{2t}{\pi},\frac{1}{2}]$ and $[\frac{1}{2},1]$, and then use $(1-x^2)^{-\frac{1}{2}}=O(1)$ and $-\alpha >0$ on $J_1$ combined with $(1-x^2)^{-\frac{1}{2}}$ being integrabel on $J_2$.
Clearly, $t=\pi - \theta$ is equivalent to $|m -m'|$ as $t \to 0$.
Thus, combining \eqref{eq:Itfinal}, \eqref{eq:It} and \eqref{eq:est} proves the estimate \eqref{bound}in case $\alpha <0$. 
In the critical case $\alpha=0$, we proceed similarly, but now observe that the integral over $J_1$ diverges logarithmically 
as $t \downarrow 0$. The rest of the assertions of the Theorem, in particular the statement on real analyticity, follow similarly to the arguments in \cite{ak}.
\end{proof}

Note that the singularity in equation \eqref{bound} coincides precisely with the  singularity of the Newtonian potential (or the resolvent kernel for the Laplacian) 
in $\R^n$ (with $s=1$ and $n=d-1$). It coincides with the bounds which we obtained for the kernel in Example 1 (i.e. for $\Delta$ in $\R^d$) in terms of the 
modified Bessel function.
This indicates that the singularity of the kernel is basically a local property. 
%Thus
% we expect that  estimates analog to equation \eqref{bound} can be proved on arbitrary compact manifolds by elliptic estimates. 
%These estimates would then provide an alternative 
%proof that the borderline case
%$2s =\mbox{dim }M$ marks the transition from bounded to unbounded kernel in general. In fact, even for a larger class of elliptic pseudodifferential operators such a 
%result should follow by passing from the parametrix construction, e.g. along the classic lines of \cite{seeley} 
%(Seeley: Complex powers of elliptic operators) 
%to the 
%distributional kernel. Furthermore, the singularity in the kernel should be a local property in general (and thus the estimate should extend to the non-compact case) and not depend on subleading 
%terms (and thus extend to operators more general than the Laplacian, with adapted singularities in the lower order terms).

%%%%%%%%%%%%%%%%%%%%%%%%%%%%%%%%%%%%%%%%%%%%%%%%%%%%%%%%%%%%%%%%%%

%%%%%%%%%%%%%%%%%%%%%%%%%%%%%%%%%%%%%%%%%%%%%%%%%%%%%%%%%%%%%%%%%%

\section*{Acknowledgments}
NM is supported by the German Research Foundation under  DFG Grant STE 1074/4-1.
 L. R. acknowledges the financial support of the AFOSR projects FA9550-17-1-0390 and BAA-AFRL-AFOSR-2016-0007 (European Office of Aerospace Research and Development), and the EU H2020-MSCA-RISE project NoMADS - DLV-777826.

\bibliographystyle{abbrv}
\bibliography{biblio,biblio_ex}

\appendix

\section{Reproducing Kernel Hilbert Spaces}\label{app:rkhs}

In this section we provide the basics about reproducing kernel Hilbert spaces (RKHSs). Classical references on the topic include \cite{as}. Here,  we mainly  follow \cite{as}, \cite{Ber_book} , \cite{stechr2008}.

\subsection{Basic definitions and results}

%Reproducing kernel Hilbert spaces allow to fruitfully combine the geometry of Hilbert spaces with the structure of function spaces.   

Let $\cX \not = \emptyset $. We recall that a map $K: \cX \times \cX \longrightarrow \mbr$ is called {\it positive semi-definite} if for any $n \in \mbn$, $\alpha_1, ..., \alpha_n \in \mbr$ 
and for any $x_1,...,x_n \in \cX$ one has 
\[  \sum_{i,j=1}^n \alpha_i \alpha_j K(x_i, x_j) \geq 0 \;. \] 
If equality holds only for $\alpha_1=...=\alpha_n =0$ for distinct $x_1,...,x_n$, then $K$ is said to be {\it positive definite}.  
The map $K$ is symmetric if $K(x,x')=K(x',x)$ for any $x,x' \in \cX$. 
\\
\\
It is well known that to every symmetric positive semi-definite function $K$ one can associate a Hilbert space 
$(\cH, \inner{\cdot, \cdot}_\cH)$, called {\it feature space} and a map 
$\Phi: \cX \longrightarrow \cH$, called {\it feature map} such that 
\[ K(x,x') = \inner{\Phi(x), \Phi(x')}_\cH  \]
for any $x, x' \in \cX$. A map satisfying the latter condition is called a {\it kernel}.

\paragraph{The RKHS associated to a kernel.} 
If $\cH$ is a Hilbert space of functions $f: \cX \longrightarrow \mbr$, then $K: \cX \times \cX \longrightarrow \mbr$ is said to be a {\it reproducing kernel of $\cH$} if 
for any $x \in \cX$ we have $K(x, \cdot) \in \cH$ and if the {\it reproducing property}
\[  f(x) = \inner{f, K(x, \cdot)}_\cH   \]
holds for any $f \in \cH$ and for any $x \in \cX$. 
%It can be shown that 
Note that any reproducing kernel is also a kernel in the above given sense. More precisely, we have 

\begin{proposition}
If $\cH$ is a Hilbert function space over $\cX$ with reproducing kernel $K$, then $\cH$ is an RKHS, being also a feature space of $K$ with {\it canonical feature map}  
$\Phi(x) = K(x, \cdot)$,  $x \in \cX$. 
\end{proposition}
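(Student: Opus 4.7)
The plan is to verify two things: first, that $\cH$ is an RKHS in the sense that evaluation functionals are continuous; and second, that $\Phi(x) := K(x,\cdot)$ realizes $K$ as a kernel via the feature-space identity $K(x,x') = \langle \Phi(x),\Phi(x')\rangle_\cH$.

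For the continuity of evaluation, I would simply invoke the reproducing property and Cauchy--Schwarz: for every $f\in\cH$ and every $x\in\cX$,
\[
|f(x)| \;=\; |\langle f,K(x,\cdot)\rangle_\cH| \;\leq\; \|f\|_\cH\,\|K(x,\cdot)\|_\cH,
\]
which is legitimate because $K(x,\cdot)\in\cH$ by hypothesis. Hence the linear functional $f\mapsto f(x)$ is bounded on $\cH$, so $\cH$ is an RKHS in the standard sense.

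For the feature-map assertion, the key observation is that the reproducing property can itself be applied to the function $K(x,\cdot)\in\cH$. Specifically, fix $x,x'\in\cX$, take $f = K(x,\cdot)\in\cH$, and evaluate at $x'$:
\[
K(x,x') \;=\; K(x,\cdot)(x') \;=\; \langle K(x,\cdot),\,K(x',\cdot)\rangle_\cH \;=\; \langle \Phi(x),\Phi(x')\rangle_\cH.
\]
This exhibits $(\cH,\Phi)$ as a feature space/feature map pair for $K$ and simultaneously shows (by symmetry of the inner product) that $K$ is symmetric and positive semi-definite, as any Gram-like expression $\sum_{i,j}\alpha_i\alpha_j\langle\Phi(x_i),\Phi(x_j)\rangle_\cH = \|\sum_i \alpha_i\Phi(x_i)\|_\cH^2 \geq 0$.

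There is essentially no obstacle here: both conclusions follow by a direct application of the reproducing property, with the only subtlety being the purely linguistic one of confirming that the notion of ``RKHS'' used in the statement coincides with ``Hilbert function space on which point evaluations are continuous,'' which is precisely what the Cauchy--Schwarz estimate above yields. No completeness, density, or Moore--Aronszajn type construction is needed, since $\cH$ is already given as a Hilbert space.
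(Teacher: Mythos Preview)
Your proof is correct and complete: the Cauchy--Schwarz bound gives continuity of point evaluations, and applying the reproducing property to $f=K(x,\cdot)$ yields the feature-map identity. The paper itself does not supply a proof of this proposition; it is stated in the appendix as a standard background fact with references to the literature, so there is nothing to compare against beyond confirming that your argument is the canonical one.
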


\begin{definition}[RKHS]
The space $\cH$ is called a {\it reproducing kernel Hilbert space} over $\cX$ if for any $x \in \cX$ the evaluation functional 
$\delta_x: \cH \longrightarrow \mbr$ is continuous, i.e. 
\[ |\delta_x(f)| = |f(x)| \leq C_x ||f||_{\cH}  \]
for any $f \in \cH$ and for some $C_x >0$. 
\end{definition}
As a consequence of this definition, one finds that if two functions $f,g$ are identical as elements in $\cH$, they coincide at any point:
\[ |f(x) - g(x)| = |\delta_x(f-g)| \leq C_x||f-g||_\cH \;. \]
We have the following fundamental result:

\begin{theorem}
Every RKHS $\cH$ over $\cX$ admits a unique reproducing kernel $K$ on $\cX$, given by 
\[ K(x,x') = \inner{\delta_x, \delta_{x'}}_\cH  \;,\qquad x,x' \in \cX  \;,\]
identifying via Riesz $\delta_x$ with an element in $\cH$. 
Additionally, if $(f_k)_{k \in I}$ is an orthonormal basis of $\cH$, then 
\[  K(x,x' ) = \sum_{k \in I} f_k(x) f_k(x') \;,\qquad x,x' \in \cX   \;. \]
\end{theorem}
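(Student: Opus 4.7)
The plan is to build the kernel directly from the Riesz representation theorem, verify uniqueness by a standard subtraction argument, and then obtain the orthonormal-basis expansion from the Fourier expansion of the representers together with the continuity of evaluation.

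First I would unpack the assumption: for every $x\in\cX$, the evaluation functional $\delta_x:\cH\to\mbr$ is continuous. By the Riesz representation theorem, there is a unique element $K_x\in\cH$ such that
\[
f(x)=\delta_x(f)=\inner{f,K_x}_\cH\qquad \text{for all } f\in\cH.
\]
Define $K:\cX\times\cX\to\mbr$ by $K(x,x'):=K_x(x')$. Applying the reproducing identity above to $f=K_x$ gives $K(x,x')=\inner{K_x,K_{x'}}_\cH$, which is the asserted formula once one identifies $\delta_x$ with $K_x$ via Riesz. By construction $K(x,\cdot)=K_x\in\cH$ and the reproducing property $f(x)=\inner{f,K(x,\cdot)}_\cH$ holds, so $K$ is a reproducing kernel of $\cH$.

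For uniqueness, suppose $\tilde K$ is another reproducing kernel. Then for any $x\in\cX$ and any $f\in\cH$,
\[
\inner{f,K(x,\cdot)-\tilde K(x,\cdot)}_\cH = f(x)-f(x)=0.
\]
Taking $f=K(x,\cdot)-\tilde K(x,\cdot)\in\cH$ yields $\|K(x,\cdot)-\tilde K(x,\cdot)\|_\cH=0$, so the two kernels agree on $\{x\}\times\cX$ for every $x$, hence everywhere.

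For the orthonormal-basis expansion, fix an ONB $(f_k)_{k\in I}$ of $\cH$ and expand each representer in Fourier series:
\[
K_x=\sum_{k\in I}\inner{K_x,f_k}_\cH f_k=\sum_{k\in I} f_k(x)\,f_k,
\]
where the second equality uses the reproducing property $\inner{f_k,K_x}_\cH=f_k(x)$ (real scalars, so no conjugation). The series converges in $\cH$; applying the continuous evaluation $\delta_{x'}$ yields pointwise convergence
\[
K(x,x')=K_x(x')=\delta_{x'}\Bigl(\sum_{k\in I} f_k(x)\,f_k\Bigr)=\sum_{k\in I} f_k(x)\,f_k(x').
\]
I expect no serious obstacle; the only point that requires care is keeping the two meanings of $K_x$ straight (as an element of $\cH$ and as the partial evaluation $K(x,\cdot)$) and justifying the interchange of the sum with $\delta_{x'}$, which is immediate from the continuity of $\delta_{x'}$ that is built into the definition of an RKHS.
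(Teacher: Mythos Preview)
Your proof is correct and is exactly the standard argument. Note, however, that the paper does not actually prove this theorem: it appears in the appendix as a review of basic RKHS facts, stated without proof and attributed to standard references. So there is no ``paper's own proof'' to compare against; your argument is precisely what one would find in those references.
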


Conversely, any kernel has a unique RKHS:

\begin{theorem}\label{feature} 
If $K$ is a kernel over $\cX$ with feature space $\cH$ and feature map $\Phi: \cX \longrightarrow \cH$, then the space 
\[ \tilde \cH = \left\{ f:\cX \longrightarrow \mbr| \exists h \in \cH \mbox{ s.th. } f=\inner{h, \Phi(\cdot)}_\cH  \right\} \]
equipped with the norm
\[ ||f||_{\tilde \cH} = \inf\{ ||h||_\cH \;| \; h \in \cH  \mbox{ s.th. } f=\inner{h, \Phi(\cdot)}_\cH  \} \]
is the only RKHS for which $K$ is a reproducing kernel. 
\end{theorem}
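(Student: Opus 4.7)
The plan is to realize $\tilde \cH$ concretely as (a space isometrically isomorphic to) a closed subspace of the feature space $\cH$, verify the reproducing property by a short direct calculation, and finally establish uniqueness via the classical density argument on the linear span of kernel sections.

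First, I would introduce the linear evaluation map $T : \cH \to \mbr^\cX$ defined by $T(h)(x) := \langle h, \Phi(x) \rangle_\cH$, so that by construction the range of $T$ is exactly $\tilde \cH$. Its kernel satisfies $\ker T = \{h \in \cH : \langle h, \Phi(x)\rangle_\cH = 0 \text{ for all } x \in \cX\} = \cH_0^\perp$, where $\cH_0 := \overline{\mathrm{span}}\{\Phi(x) : x \in \cX\}$. For any $f = T(h) \in \tilde \cH$, decomposing $h = h_0 + h_1$ orthogonally with $h_0 \in \cH_0$ and $h_1 \in \ker T$ gives $T(h_0) = f$ and $\|h\|_\cH^2 = \|h_0\|_\cH^2 + \|h_1\|_\cH^2$, so the infimum defining $\|f\|_{\tilde \cH}$ is attained uniquely at $h_0$ with $\|f\|_{\tilde \cH} = \|h_0\|_\cH$. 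Hence $T|_{\cH_0} : \cH_0 \to \tilde \cH$ is an isometric bijection, and $\tilde \cH$ inherits a Hilbert space structure (with inner product pulled back from $\cH_0$) for which the infimum in the statement is a genuine Hilbert norm.

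Next, I would verify that $K$ is the reproducing kernel of $\tilde \cH$. Since $\Phi(x) \in \cH_0$ for every $x \in \cX$, we have $K(x, \cdot) = \langle \Phi(x), \Phi(\cdot)\rangle_\cH = T(\Phi(x)) \in \tilde \cH$. For $f = T(h_0)$ with $h_0 \in \cH_0$, the isometry established above yields
\[
\langle f, K(x, \cdot)\rangle_{\tilde \cH} = \langle h_0, \Phi(x)\rangle_\cH = f(x),
\]
which is precisely the reproducing property. In particular the evaluation functionals are continuous on $\tilde \cH$, so $\tilde \cH$ is an RKHS with reproducing kernel $K$.

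For uniqueness, suppose $\cH'$ is any RKHS on $\cX$ whose reproducing kernel is $K$. The closed linear span of $\{K(x,\cdot) : x \in \cX\}$ is dense in $\cH'$, since any $f \in \cH'$ orthogonal to every $K(x,\cdot)$ satisfies $f(x) = \langle f, K(x,\cdot)\rangle_{\cH'} = 0$ for all $x$, forcing $f = 0$. On this span the inner product is entirely prescribed by $K$ through bilinearity and the reproducing property, namely $\langle \sum_i \alpha_i K(x_i,\cdot), \sum_j \beta_j K(y_j,\cdot)\rangle_{\cH'} = \sum_{i,j} \alpha_i \beta_j K(x_i, y_j)$. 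The same formula holds in $\tilde \cH$ (which is likewise the closure of this span, by construction), so both $\cH'$ and $\tilde \cH$ are the unique Hilbert completion of the same pre-Hilbert space of kernel sections, giving $\tilde \cH = \cH'$ isometrically as Hilbert spaces of functions. The main obstacle is the very first step: showing that the infimum in the definition of $\|\cdot\|_{\tilde \cH}$ yields a Hilbert norm attained at a canonical element, which is resolved cleanly by the orthogonal decomposition $\cH = \cH_0 \oplus \ker T$; once this is in hand, the remainder is essentially bookkeeping.
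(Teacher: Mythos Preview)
The paper states this theorem in its appendix as a background result without providing any proof, deferring instead to standard references such as \cite{stechr2008} and \cite{carmeli2006vector}; there is therefore no in-paper argument to compare against. Your proof is correct and is essentially the standard textbook argument: the orthogonal decomposition $\cH = \cH_0 \oplus \ker T$ identifies $\tilde\cH$ isometrically with the closed span $\cH_0$ of the feature vectors, after which the reproducing property and uniqueness follow routinely.
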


Thus, there is a one-to-one relation between kernels and RKHSs.

\subsection{Mercer's Theorem and Extensions}

Assume that $(\cX, d)$ is a compact metric space possessing a finite
Borel measure $\nu$ such that its support $\supp{\nu}=\cX$. Let $\cH$ be an RKHS on $\cX$ 
with continuous kernel $K: \cX \times \cX \longrightarrow \mbr$, being bounded by compactness. The integral operator $L_K: L^2(\cX, \nu) \longrightarrow L^2(\cX, \nu)$ 
defined by 
\[  L_Kf = \int_\cX K(\cdot, x') f(x') d\nu(x')  \]
is bounded, nuclear, selfadjoint (by symmetry of $K$) and even positive. 
In particular, $L_K$ maps continuously into $\cC(\cX)$, the space of continuous functions on $\cX$.  The spectral theorem ensures the existence of an at most 
countable family $(f_k)_{k \in I}$ of functions, forming an orthonormal system (ONS) in $L^2(\cX, \nu)$ such that  for any $f \in L^2(\cX, \nu)$
\[ L_Kf = \sum_{k \in I}\lam_k \inner{f, f_k}_{L^2}f_k \;.   \]
The family $(\lam_k)_{k \in I}$ are the nonzero eigenvalues of $L_K$, counted with geometric multiplicities. Note that we may choose continuous functions as representatives of the 
eigenvectors, i.e. $f_k \in \cC(\cX)$. The classical version of {\it Mercer's Theorem} shows that the kernel $K$ enjoys a representation in terms 
of the eigenvalues and eigenfunctions, i.e., for any $x,x' \in \cX$ one has the expansion  
\begin{equation}
\label{eq:mercer}
  K(x,x') = \sum_{k \in I} \lam_k f_k(x)f_k(x') \;,
\end{equation}  
where the convergence is absolute and uniform. Such a representation as in \eqref{eq:mercer} is called a {\it Mercer representation of $K$}. 
\\
\\
The classical Mercer Theorem has been extended, relaxing the compactness of $\cX$:  
%\paragraph{General domains.}
%\cite{Sun05}, \cite{SteinScov12}
Let $J: \cH \longrightarrow L^2(\cX, \nu)$ denote the inclusion. In general, this map is not injective and thus the family 
$(\sqrt \lam_k f_k)_{k \in I}$ is not an orthonormal basis (ONB) of $\cH$ and $K$ does not have a pointwise convergent expansion \eqref{eq:mercer}. 
The next Proposition characterizes pointwise  convergent Mercer representations.

\begin{prop}[\cite{SteinScov12}, Thm. 3.1]
Let $\cX$ be a measurable space equipped with a measure $\nu$. Assume the RKHS $\cH$ possess a measurable kernel $K$ on $\cX$ and is 
compactly embedded into $ L^2(\cX, \nu)$. Then $K$ admits a pointwise convergent Mercer representation  \eqref{eq:mercer} if and only if 
the operator $J: \cH \longrightarrow L^2(\cX, \nu)$ is injective.
\end{prop}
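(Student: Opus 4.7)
The plan is to use the singular value decomposition of the compact inclusion $J$ together with the uniqueness of the reproducing kernel. The whole argument pivots on a single observation: there is a canonical orthonormal system in $\cH$ associated to the eigenfunctions $(f_k)$, and it is an orthonormal basis of $\cH$ precisely when $J$ is injective.

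Concretely, since $\cH$ is compactly embedded in $L^2(\cX,\nu)$, the inclusion $J$ is compact, and hence so is $L_K = JJ^*$ on $L^2(\cX,\nu)$. The spectral theorem yields an at most countable orthonormal system $(f_k)\subset L^2(\cX,\nu)$ with $L_K f_k = \lambda_k f_k$, $\lambda_k > 0$. Set $e_k := \lambda_k^{-1/2} J^* f_k \in \cH$; a direct computation gives $\langle e_j, e_k\rangle_\cH = \delta_{jk}$ and $J e_k = \sqrt{\lambda_k}\,f_k$. The closed linear span of $(e_k)$ in $\cH$ coincides with $\overline{\operatorname{ran}(J^*)} = (\ker J)^\perp$, so $(e_k)$ is an orthonormal basis of $\cH$ if and only if $J$ is injective. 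Selecting the pointwise representatives $f_k := \lambda_k^{-1/2} e_k$ (legitimate because $e_k \in \cH$ is defined everywhere) makes $e_k(x) = \sqrt{\lambda_k}\,f_k(x)$ a genuine pointwise identity on $\cX$.

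For the direction $(\Leftarrow)$, assume $J$ is injective. Then $(e_k)$ is an orthonormal basis of $\cH$, and for each fixed $x$ the expansion $K(x,\cdot) = \sum_k e_k(x)\,e_k$ converges in $\cH$. Applying the continuous point evaluation at $x'$ gives $K(x,x') = \sum_k e_k(x)e_k(x') = \sum_k \lambda_k f_k(x) f_k(x')$ pointwise. For $(\Rightarrow)$, assume the Mercer identity holds pointwise. Setting $x=x'$ shows $\sum_k |e_k(x)|^2 = K(x,x) < \infty$ for every $x$, hence the orthogonal series $\sum_k e_k(x)\,e_k$ converges in $\cH$ to some $\widetilde K_x$; evaluating at $y$ yields $\widetilde K_x(y) = K(x,y)$, so uniqueness of the representer in $\cH$ forces $\widetilde K_x = K(x,\cdot)$. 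If $g \in \cH$ satisfies $\langle g, e_k\rangle_\cH = 0$ for every $k$, then $g(x) = \langle g, K(x,\cdot)\rangle_\cH = \sum_k e_k(x)\,\langle g, e_k\rangle_\cH = 0$ for every $x$, so $g = 0$. Thus $(e_k)$ is an orthonormal basis of $\cH$, and by the pivot observation $J$ is injective.

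The main obstacle will be ensuring that pointwise statements are meaningful, since the spectral theorem alone only produces $L^2$-equivalence classes $f_k$; the remedy of redefining $f_k$ through $e_k$ pointwise from the outset handles this cleanly. A minor additional point is to check that an element of $\cH$ that agrees pointwise with $K(x,\cdot)$ must equal $K(x,\cdot)$, which is immediate from the RKHS axiom that $\cH$ consists of pointwise-defined functions.
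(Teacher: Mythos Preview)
The paper does not prove this proposition at all: it is stated in the appendix with a citation to \cite{SteinScov12}, Theorem~3.1, and no argument is given. So there is no ``paper's own proof'' to compare against.

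Your argument is correct and is in fact the standard one. The key step---recognizing $(e_k)=(\lambda_k^{-1/2}J^*f_k)$ as an orthonormal system in $\cH$ whose closed span is $(\ker J)^\perp$---is exactly the right pivot, and both directions follow cleanly from it. The care you take in fixing pointwise representatives $f_k:=\lambda_k^{-1/2}e_k\in\cH$ before making any pointwise claims is the main technical point, and you handle it properly; without this, the Mercer identity would only be an $L^2$ statement and the $(\Rightarrow)$ direction would not go through. The final uniqueness step (an element of $\cH$ agreeing pointwise with $K(x,\cdot)$ must equal $K(x,\cdot)$) is indeed immediate since $\cH$ consists of genuine functions.
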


\begin{prop}[\cite{SteinScov12}, Cor. 3.5]
Let $\cX$ be a Hausdorff space and $\nu$ be a Borel measure on $\cX$. Moreover, let $K$ be a continuous kernel whose RKHS is compactly embedded into  $ L^2(\cX, \nu)$. 
Then the convergence of 
\[    K(x,x') = \sum_{k \in I} \lam_k f_k(x)f_k(x') \]
is uniform in $x$ and $x'$ on every compact subset $A \subset supp(\nu)$. 
\end{prop}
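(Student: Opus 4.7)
The plan is to reduce uniform convergence off the diagonal to uniform convergence on the diagonal via Cauchy--Schwarz, and to establish the latter using Dini's theorem after passing to continuous representatives of the $L^2$-eigenfunctions.

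First, I would establish a pointwise-everywhere Mercer expansion using canonical continuous representatives. Because $K$ is continuous, one has the reproducing-kernel bound $|f(x)| \leq \sqrt{K(x,x)}\,\|f\|_\cH$ for $f \in \cH$, so $\cH$ embeds continuously into $C(\cX)$. Writing $J: \cH \to L^2(\cX,\nu)$ for the compact embedding (which is injective by definition of compact embedding) and noting $L_K = J J^*$, for each eigenvalue $\lambda_k > 0$ I would set $\tilde f_k := \lambda_k^{-1/2} J^* f_k \in \cH$, a continuous function on $\cX$ whose $L^2$-class equals $f_k$. Two continuous functions agreeing $\nu$-a.e.\ coincide on $\supp(\nu)$, so $\tilde f_k$ is unambiguously determined there. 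A short calculation shows that $(\sqrt{\lambda_k}\,\tilde f_k)_k$ is orthonormal in $\cH$, and injectivity of $J$ together with the spectral decomposition of the self-adjoint compact operator $L_K$ forces this system to span $\overline{\operatorname{range} J^*} = (\ker J)^\perp = \cH$, hence to be an orthonormal basis. Expanding $K(x,\cdot) \in \cH$ in this basis and applying the reproducing property yields the pointwise identity
\[
K(x,x') = \sum_{k} \lambda_k\, \tilde f_k(x)\,\tilde f_k(x')\qquad \text{for every } (x,x') \in \cX \times \cX.
\]

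Next, let $K_N(x,x') := \sum_{k=1}^N \lambda_k \tilde f_k(x)\tilde f_k(x')$, which is continuous on $\cX \times \cX$. On the diagonal, the remainder $K(x,x) - K_N(x,x) = \sum_{k>N} \lambda_k \tilde f_k(x)^2 \geq 0$ is a pointwise decreasing sequence of nonnegative continuous functions converging to $0$, and the limit $0$ is continuous. Given any compact $A \subset \supp(\nu)$, Dini's theorem yields
\[
\sup_{x \in A}\bigl(K(x,x) - K_N(x,x)\bigr) \longrightarrow 0 \quad \text{as } N \to \infty.
\]
Finally, for $x,x' \in A$, Cauchy--Schwarz applied to the tail gives
\[
\bigl|K(x,x') - K_N(x,x')\bigr|^2 \leq \Bigl(\sum_{k>N} \lambda_k \tilde f_k(x)^2\Bigr) \Bigl(\sum_{k>N} \lambda_k \tilde f_k(x')^2\Bigr) = \bigl(K(x,x) - K_N(x,x)\bigr)\bigl(K(x',x') - K_N(x',x')\bigr),
\]
so taking the supremum over $A \times A$ and invoking the diagonal estimate gives uniform convergence on $A \times A$, as required.

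The main obstacle is the first step: all manipulations require that the $L^2$-eigenfunction classes $f_k$ possess canonical continuous representatives $\tilde f_k$ for which the Mercer series has genuine pointwise meaning, since pointwise arithmetic on equivalence classes is meaningless and Dini's theorem needs continuous functions converging to a continuous limit. The route through $\tilde f_k = \lambda_k^{-1/2} J^* f_k$ uses continuity of $K$ (to embed $\cH \hookrightarrow C(\cX)$) and injectivity of $J$ (to ensure $(\sqrt{\lambda_k}\,\tilde f_k)_k$ is an ONB rather than merely an ONS). The restriction $A \subset \supp(\nu)$ in the statement is the shadow of this issue: off the support, continuous representatives are not uniquely pinned down by their $L^2$-classes, so uniform convergence there would be an artificial statement about the particular choice made.
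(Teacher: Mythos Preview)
The paper does not prove this proposition; it is quoted from Steinwart--Scovel (2012), Corollary~3.5, and appears in the appendix as a background result. Your argument follows the classical Mercer route---continuous representatives of the eigenfunctions, Dini on the diagonal, Cauchy--Schwarz off the diagonal---which is essentially the approach in the cited reference as well.

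Two points deserve correction. First, the formula $\tilde f_k := \lambda_k^{-1/2} J^* f_k$ is off by a square root: since $J J^* = L_K$ one has $J(\lambda_k^{-1/2} J^* f_k) = \lambda_k^{-1/2}\lambda_k f_k = \sqrt{\lambda_k}\, f_k$, so this element does \emph{not} represent $f_k$. The correct choice is $\tilde f_k := \lambda_k^{-1} J^* f_k$; with this, $J\tilde f_k = f_k$ and a direct computation gives $\langle \sqrt{\lambda_k}\,\tilde f_k, \sqrt{\lambda_j}\,\tilde f_j\rangle_\cH = \delta_{kj}$, so your subsequent claims go through.

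Second, the assertion that $J$ is injective ``by definition of compact embedding'' is in tension with the paper's own usage: the appendix explicitly remarks that $J$ need not be injective, and the preceding proposition (Steinwart--Scovel, Thm.~3.1) characterizes pointwise Mercer precisely by injectivity of $J$. Consequently your claim that the expansion holds for \emph{every} $(x,x')\in\cX\times\cX$ is too strong. The repair is the one you already hint at in your closing paragraph: for $x\in\supp(\nu)$, any $f\in\ker J$ is continuous and vanishes $\nu$-a.e., hence $f(x)=0$; thus $K(x,\cdot)\in(\ker J)^\perp$. Expanding $K(x,\cdot)$ in the orthonormal basis $(\sqrt{\lambda_k}\,\tilde f_k)_k$ of $(\ker J)^\perp = \overline{\operatorname{ran}J^*}$ then yields the pointwise identity on $\supp(\nu)\times\supp(\nu)$, which is all that Dini and the Cauchy--Schwarz step require.
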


%\paragraph{General Range.} \todo{not super important for this paper...?}
%\cite{DeVit13}, \cite{carmeli2006vector}, \cite{Ca10} 

\subsection{Relation to Interpolation spaces}
\label{sec:interpolation}

{ The fractional powers of the integral
operator $L_K$ are defined by~\eqref{eq:4} with the choice
$\Phi(\la)=\la^r$. Since $L_K$ is compact, we have a more explicit formula.}
Let $(f_k)_{k \in I}$ be an ONS in $ L^2(\cX, \nu)$,
consisting of eigenfunctions  
of $L_K$ associated to $(\lam_k)_{k \in I}$. 
Given $r \in [0, \infty)$, the power $L_K^r:  L^2(\cX, \nu) \longrightarrow L^2(\cX, \nu)$  
is given by   
\[  L^r_K f := \sum_{k\in I} \lam_k^r \inner{f, f_k}_{L^2} f_k \;, \qquad f \in L^2(\cX, \nu) \;. \]
Note that this definition is independent of the chosen ONS of eigenfunctions. Then $L_K^r$  can be identified with an integral operator corresponding to a new kernel. 
We summarize some results given in \cite{SteinScov12}.  

\begin{prop}
Let $\cX$ be a measurable space with measure $\nu$ and $K$ be a measurable kernel on $\cX$ whose RKHS $\cH$ is compactly embedded into $L^2(\cX, \nu)$. Then 
$L^r_K = L_{K^r}$, where for any $x,x' \in \cX$ one has 
\[  K^r(x, x') = \sum_{k \in I} \lam^r_k f_k(x) f_k(x') \;. \] 
The power $K^r$ is a kernel with associated RKHS $\cH^r$, provided 
\[  \sum_{k \in I} \lam^r_k f_k^2(x) < \infty \;, \qquad  x \in \cX \;.  \] 
Moreover, $\cH^r$ is separable and compactly embedded into $L^2(\cX, \nu)$, 
satisfying $\cH^{r_1} \hookrightarrow  \cH^{r_2}$, $r_2 \leq r_1$. 
\end{prop}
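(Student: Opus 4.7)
The plan is to work from the spectral decomposition of $L_K$ and treat the three assertions (operator identity, kernel/RKHS identification, embedding properties) in this order. Since $\cH$ is compactly embedded in $L^2(\cX,\nu)$, the operator $L_K$ is compact, positive and self-adjoint, and the Borel functional calculus gives
\[
  L_K^r f \;=\; \sum_{k\in I} \lam_k^r \inner{f, f_k}_{L^2} f_k \qquad (f\in L^2(\cX,\nu)),
\]
with convergence in operator norm because $\lam_k^r\to 0$. The partial sums $S_n = \sum_{k\le n} \lam_k^r f_k\otimes f_k$ are integral operators with measurable kernels $K^r_n(x,x')=\sum_{k\le n} \lam_k^r f_k(x)f_k(x')$, and the pointwise summability hypothesis combined with Cauchy--Schwarz,
\[
|K^r(x,x')| \;\le\; \Bigl(\sum_k \lam_k^r f_k(x)^2\Bigr)^{1/2}\Bigl(\sum_k \lam_k^r f_k(x')^2\Bigr)^{1/2},
\]
shows that the series defining $K^r$ converges absolutely and pointwise. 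Since $\norm{K^r(x,\cdot)}_{L^2}^2=\sum_k \lam_k^{2r}f_k(x)^2$ is also finite (eventually $\lam_k^{2r}\le\lam_k^r$ because $\lam_k\to 0$), a dominated-convergence argument identifies $L_{K^r}$ with $L_K^r$.

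For the second claim, the pointwise summability lets me define a feature map $\Phi:\cX\to\ell^2(I)$ by $\Phi(x) = (\sqrt{\lam_k^r}\, f_k(x))_k$; by construction $K^r(x,x') = \inner{\Phi(x), \Phi(x')}_{\ell^2(I)}$, so $K^r$ is a symmetric, positive semi-definite kernel. Theorem~\ref{feature} then produces $\cH^r$ as the range of $a\mapsto \sum_k a_k\sqrt{\lam_k^r}f_k(\cdot)$ with the quotient norm inherited from $\ell^2(I)$. Mimicking the injectivity argument used in the proof of Proposition~\ref{kernelexpansion} (the sequence $\{f_k\}$ is an orthonormal basis of $L^2$), this map is in fact injective, so one obtains the explicit description $\cH^r = \{\sum_k c_k f_k : \sum_k c_k^2/\lam_k^r < +\infty\}$ with $\norm{f}_{\cH^r}^2 = \sum_k c_k^2/\lam_k^r$.

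The last two claims follow readily from this description. Since $L_K$ is compact, $I$ is countable, so $\cH^r$ is separable; denoting by $J_r:\cH^r\to L^2(\cX,\nu)$ the inclusion, the identity $J_r J_r^* = L_{K^r}=L_K^r$ (obtained as in equation~\eqref{eq:11} of Theorem~\ref{rkhs}) realizes $J_r$ as compact because $L_K^r$ is. For $\cH^{r_1}\hookrightarrow \cH^{r_2}$ with $r_2\le r_1$, I would write any $f\in\cH^{r_1}$ as $f=\sum_k a_k \sqrt{\lam_k^{r_1}} f_k$ with $(a_k)\in\ell^2(I)$ and decompose $\sqrt{\lam_k^{r_1}} = \lam_k^{(r_1-r_2)/2}\sqrt{\lam_k^{r_2}}$; since $\lam_k^{(r_1-r_2)/2} \le \norm{L_K}^{(r_1-r_2)/2}$, the new coefficients $b_k=a_k\lam_k^{(r_1-r_2)/2}$ remain square-summable, giving $f\in\cH^{r_2}$ with $\norm{f}_{\cH^{r_2}}\le \norm{L_K}^{(r_1-r_2)/2}\norm{f}_{\cH^{r_1}}$.

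The most delicate step will be the first one: identifying $L_K^r$, defined via spectral calculus at the $L^2$ level, with the pointwise integral operator $L_{K^r}$ whose kernel is given by the explicit Mercer-type series. This forces one to use the pointwise summability hypothesis to pass from $L^2$-convergence of eigenfunction expansions to bona fide pointwise values, and to control the interchange of sum and integral in $\int K^r(x,x')g(x')\,d\nu(x')$ via Cauchy--Schwarz together with the square-summability of $(\sqrt{\lam_k^r}f_k(x))_k$; without this hypothesis, $K^r$ is only well-defined as an element of $L^2(\cX\times\cX,\nu\otimes\nu)$ and the individual pointwise values are meaningless.
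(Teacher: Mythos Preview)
The paper does not actually prove this proposition: it is stated in the appendix as a summary of results from \cite{SteinScov12}, with no argument given. So there is no ``paper's own proof'' to compare against; your proposal supplies what the paper outsources to the reference.

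Your argument is essentially correct and follows the natural route. A couple of small points are worth tightening. First, in the injectivity step you invoke that $\{f_k\}$ is an orthonormal basis of $L^2$, but in this generality it is only an orthonormal \emph{system} (the kernel of $L_K$ may be nontrivial); however, this does not matter for the argument, since if $\sum_k a_k\sqrt{\lam_k^r}f_k(\cdot)$ vanishes pointwise it vanishes in $L^2$, and then orthonormality of the $f_k$ alone forces $a_k=0$. Second, the first assertion $L_K^r=L_{K^r}$ as you have written it already uses the pointwise summability hypothesis (to make sense of $K^r(x,x')$ as a function rather than merely an $L^2$-kernel); this is consistent with how the proposition is phrased, but you should be explicit that the identification is at the level of operators on $L^2$, with the pointwise formula valid under the stated hypothesis. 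With these clarifications, your proof is complete.
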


Let $E, F$ be two Banach spaces which are continuously embedded in some topological (Hausdorff) vector space $\cE$. For $0<r<1$ and $1\leq \beta \leq \infty$ 
we denote by $[E,F]_{r,\beta}$ the interpolation space, defined by the {\it real interpolation method}, see e.g. \cite{Ben88}. 
The images of the above defined power spaces can be identified with interpolation spaces. 

\begin{prop}[\cite{SteinScov12}, Thm. 4.6]
\label{prop:inter}
For any $0<r<1$ one has $ran(L_K^{r/2}) = \cH^{r} =[L^2(\cX, \nu), \cH]_{r, 2}$. 
\end{prop}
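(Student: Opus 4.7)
The plan is to establish the proposition in two stages: first identify $\operatorname{ran}(L_K^{r/2})$ with $\mathcal{H}^r$ via spectral decomposition, and then identify $\operatorname{ran}(L_K^{r/2})$ with the real interpolation space via the $K$-functional. Throughout, I will work with the orthonormal system $(f_k)_{k\in I}$ of eigenfunctions of $L_K$, with eigenvalues $\lambda_k>0$, and decompose $L^2(\mathcal{X},\nu) = \overline{\operatorname{span}}\{f_k\} \oplus \ker L_K$.

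The first step is to show $\operatorname{ran}(L_K^{r/2}) = \mathcal{H}^r$ with equivalent (in fact equal) norms. By the spectral theorem, $f = L_K^{r/2} g$ for some $g\in L^2$ if and only if $f = \sum_k \lambda_k^{r/2} \langle g,f_k\rangle f_k$, which amounts to saying $f = \sum_k a_k f_k$ with $\sum_k \lambda_k^{-r} a_k^2 < \infty$. The minimal such $g$ has norm $\|g\|_2^2 = \sum_k \lambda_k^{-r} a_k^2$. On the other hand, by the same construction that proves Proposition~\ref{kernelexpansion}, the RKHS $\mathcal{H}^r$ of the kernel $K^r(x,x')=\sum_k \lambda_k^r f_k(x)f_k(x')$ admits $(\lambda_k^{r/2} f_k)_k$ as an orthonormal basis, hence consists exactly of the series $\sum_k a_k f_k$ with $\sum_k \lambda_k^{-r} a_k^2 < \infty$, and carries the same norm. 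Equality then follows.

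The second step is the interpolation identity $\operatorname{ran}(L_K^{r/2}) = [L^2(\mathcal{X},\nu), \mathcal{H}]_{r,2}$. Since the inclusion $J:\mathcal{H}\hookrightarrow L^2$ satisfies $J J^* = L_K$, one can endow $\mathcal{H}$ with the Hilbert norm coming from $L_K^{-1/2}$ on $\overline{\operatorname{ran}} L_K$. For any $f = f^{\perp} + \sum_k a_k f_k \in L^2$ with $f^{\perp}\in\ker L_K$, a direct computation of the $K$-functional for the couple $(L^2,\mathcal{H})$ gives
\[
K(t,f)^2 \asymp \|f^{\perp}\|_2^2 + \sum_k \min(1, t^2\lambda_k^{-1})\, a_k^2 .
\]
Using the equivalent quadratic norm on interpolation between two Hilbert spaces,
\[
\|f\|^2_{[L^2,\mathcal{H}]_{r,2}} \asymp \int_0^\infty \bigl(t^{-r} K(t,f)\bigr)^2 \frac{dt}{t},
\]
and computing the scalar integrals $\int_0^\infty t^{-2r-1}\min(1,t^2\lambda_k^{-1})\,dt \asymp \lambda_k^{-r}$ for $0<r<1$, yields $\|f\|^2_{[L^2,\mathcal{H}]_{r,2}} \asymp \|f^{\perp}\|_2^2\cdot \infty + \sum_k \lambda_k^{-r} a_k^2$ unless $f^{\perp}=0$. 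Hence the interpolation space consists precisely of the $f$ with $f^{\perp}=0$ and $\sum_k \lambda_k^{-r} a_k^2<\infty$, which, by Step~1, is $\operatorname{ran}(L_K^{r/2})=\mathcal{H}^r$.

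The main obstacle is the careful treatment of the non-injectivity of $L_K$: the non-density of $\mathcal{H}$ in $L^2$ means that $\ker L_K$ must be shown to lie outside the interpolation space for every $r\in(0,1)$, and this is exactly what the divergent integral in the $K$-functional computation is designed to detect. A secondary technical point is justifying the reduction to the quadratic norm on $[L^2,\mathcal{H}]_{r,2}$, which is standard for Hilbert couples (e.g.\ via the Lions--Peetre equivalence theorem, cf.~\cite{Ben88}), and verifying that the norm equivalence constants can be chosen uniformly in $k$.
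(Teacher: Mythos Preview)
The paper does not give its own proof of this proposition: it is stated in the appendix as a citation of Theorem~4.6 in \cite{SteinScov12}, with no argument supplied. So there is no ``paper's proof'' to compare against; your task reduces to producing a correct independent argument.

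Your two-step plan is sound and essentially the standard route. Step~1 is correct once one accepts the preceding proposition (also quoted from \cite{SteinScov12}) that $K^r$ is a well-defined kernel with RKHS $\mathcal H^r$; the family $(\lambda_k^{r/2} f_k)_k$ is then an ONB and the identification $\operatorname{ran}(L_K^{r/2})=\mathcal H^r$ follows. In Step~2 your $K$-functional computation is right: for $f=f^\perp+\sum_k a_k f_k$ one has $K_2(t,f)^2=\|f^\perp\|_2^2+\sum_k a_k^2\, t^2/(\lambda_k+t^2)$, and the scalar integral $\int_0^\infty t^{-2r-1}\,t^2/(\lambda_k+t^2)\,dt = C_r\,\lambda_k^{-r}$ with $C_r<\infty$ for $0<r<1$, while the constant term $\|f^\perp\|_2^2$ makes the integral diverge at $0$ whenever $f^\perp\neq 0$. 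The use of the quadratic $K$-functional for Hilbert couples is indeed standard.

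One point you gloss over and should make precise in a full write-up: $\operatorname{ran}(L_K^{r/2})$ lives in $L^2(\mathcal X,\nu)$ (equivalence classes), whereas $\mathcal H^r$ is by definition a space of genuine functions. The identification in Step~1 therefore needs the injectivity of the inclusion $\mathcal H^r\hookrightarrow L^2(\mathcal X,\nu)$, or at least a careful statement of what equality of these two spaces means. This is exactly the content of the proposition you cite just before (Thm.~3.1 in \cite{SteinScov12}), so the dependency should be made explicit.
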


\section{Basic notions on Riemannian manifolds}\label{review}

In this section we review the definitions and  results on Riemannian
manifolds, which are needed in the paper, see \cite{pet16} as a standard
reference. 
\begin{prop}\label{prop_M}
Let $M$ be an $n$-dimensional connected Riemannian manifold. 
  \begin{enumerate}[a)]
  \item\label{distance} The manifold $M$ has a natural structure of metric space with
    respect to the distance
    \[ d(m,m')= \inf_{\gamma} \ell_\gamma,\] where the infimum is
    taken over all the smooth curves $\gamma:[a,b]\to\R$ such that
    $\gamma(a)=m$ and $\gamma(b)=m'$, and $\ell_\gamma$ is the length of
    $\gamma$, {\em i.e.} 
    \[ \ell_\gamma =\int_a^b
      \sqrt{g(\gamma'(t),\gamma'(t))}_{\gamma(t)}\, dt.\]
  \item\label{Hopf} The manifold $M$ is complete if one of the
    following equivalent conditions is satisfied:
    \begin{enumerate}[i)]
    \item the space $M$ is complete as a metric space;
    \item the closed and bounded subsets of $M$ are compact;
    \item for all $m\in M$ and $v\in T_m(M)$ there exists a unique smooth
      curve $\gamma:[0,+\infty) \to M$, called geodesic, such that
      \begin{equation}
        \gamma(0)= m \qquad \gamma'(0)=v\qquad  \frac{D}{dt}\gamma'(t)=
        0 , \label{eq:8}
      \end{equation}
      where $\frac{D}{dt} v$ denotes the covariant derivative  along the curve
      $\gamma(t)$ of the velocity vector field $v(t)=\gamma'(t)$ defined along the curve.
    \end{enumerate}
The equivalence of the above conditions is the content of the Hopf-Rinow theorem.
  \item If $M$ is an embedded closed submanifold of $\R^d$, then it is
    complete.
  \item\label{inj_radious} If $M$ is complete, for all $m\in M$ the exponential map is
    \[ \exp_m: T_m(M) \to M \qquad \exp_m(v)= \gamma(1), \]
     where $\gamma$
    is the geodesic defined by~\eqref{eq:8}.  There exists a maximal
    $r \in (0,+\infty]$ such that $\exp_m$ is a diffeomorphism from
    $B(0,r)\subset T_m(M)$ onto $B(m,r)\subset M$. The radius $r$ is called the injective
    radius at $m$ and it is denoted by $\operatorname{inj}(m)$.
  \item\label{exp} For any $m_0\in M$ and $0<r< \operatorname{inj}(m_0)$,
    the pair $( B(0,r),\exp_m)$ gives a local system of coordinates, which are called normal
    coordinates, on the open set $B(m_0,r) \subset M$. Fixing an orthonormal
    base $\set{e_i}_i^n$ of $T_{m_0}(M)$, the corresponding local chart is
    \begin{equation}
      \exp_{m_0}^{-1}(m) =\sum_{i=1}^n x^i(m) e_i \qquad m \in  B(m_0,r).\label{eq:26}
    \end{equation}
    Furthermore
    \[
      g= g_{ij} dx^idx^j \qquad g_{ij}= g(\partial_i,\partial_j)
    \]
    where $\partial_i=\frac{\partial}{\partial x_i}$ and, with slight
    abuse of notation, we denote by $g$ the $n\times n$ matrix
    $[g_{ij}]_{ij}$ and by $g^{ij}$ the elements of its inverse. The
    name ``normal'' refers to the property
    \begin{equation}
      \label{eq:30}
      g_{ij}(m_0)=\delta_{ij}  \qquad \partial_k g_{ij}(m_0)=0.
    \end{equation}
  \item The connection $\nabla$ in local coordinates is given by
    \begin{equation}
      \label{eq:32}
      (\nabla_Y X)^k= Y^i \partial_i( X^k) + Y^i X^j \Gamma^k_{ij}  
    \end{equation}
    where the Christoffel symbols  of second kind are
    \[
      \Gamma^k_{ij} =\frac{1}{2} g^{k\ell} \left( \partial_j g_{i\ell}
        + \partial_i g_{j\ell}- \partial_\ell g_{ii} \right) = g^{k
        \ell } \Gamma_{ij,\ell},
    \]
    see {\rm\cite[page 31]{pet16}}.
  \item\label{def_bounded}  The manifold $M$ has a bounded geometry if
    the two following 
    conditions hold true
    \begin{subequations}
      \begin{enumerate}[i)]
      \item there exists $r_M>0$ such that
        \begin{equation}
          \label{eq:18}
          \operatorname{inj}(m)\geq r_M \qquad m\in M;
        \end{equation}
      \item  given a local system of coordinates as in~\eqref{eq:26},
        for all multi-index $\alpha$
        \begin{equation}
          \label{eq:27}
  \operatorname{det}g(m) >c \qquad       \abs{D^\alpha g_{ij}(m)} \leq c_\alpha \qquad m\in B(m_0,r)
        \end{equation}
 where the constants $c, c_\alpha$  are uniform for all systems of
   normal coordinates, see~{\rm \cite[page 283]{trib92}}.
      \end{enumerate}
    \end{subequations}
  \end{enumerate}
\end{prop}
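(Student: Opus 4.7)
The plan is to treat Proposition~\ref{prop_M} as a summary of classical Riemannian geometry, since every item is available, essentially verbatim, in Petersen \cite{pet16}. My ``proof'' therefore consists of identifying the right place in a standard reference and inserting a short bridging argument in the few cases where the statement is a direct corollary rather than a quotation. The main obstacle is not mathematical but organizational: since the rest of the paper repeatedly invokes uniform-in-base-point constants (the bound $r_M > 0$ and the bounds in \eqref{eq:27}), the statement must be phrased to make those uniformities explicit, so the structure of the proof is really dictated by what the rest of the paper will need.

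For item a), I would verify the axioms of a metric: symmetry and the triangle inequality are immediate from reversing and concatenating admissible curves, and positivity of $d$ reduces to showing that $\exp_m$ is a local diffeomorphism near $v=0$. The latter is the inverse function theorem applied to $\exp_m$ (whose differential at $0$ is $\mathrm{Id}_{T_mM}$), which yields a neighborhood of $m$ on which the intrinsic distance is bi-Lipschitz equivalent to a pullback Euclidean distance. Item b) is the classical Hopf--Rinow theorem, which I would invoke directly. Item c) then follows from b): the intrinsic distance on a closed embedded submanifold of $\R^d$ dominates the ambient Euclidean distance, so intrinsic closed bounded sets are closed and bounded in $\R^d$, hence compact by Heine--Borel, so condition ii) of b) is satisfied.

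For items d) and e), completeness guarantees global definition of $\exp_m\colon T_m(M)\to M$; the injectivity radius is then defined as the supremum of radii $r$ for which $\exp_m\colon B(0,r)\to B(m,r)$ is a diffeomorphism, and positivity at each $m$ is the inverse function theorem again, combined with the standard fact that the set of such $r$ is open. Taking $\varphi_m = \exp_m^{-1}$ produces normal coordinates, and the Gauss lemma yields \eqref{eq:30}. Item f) is the textbook coordinate formula for the Levi--Civita connection, derived from metricity and vanishing torsion. Finally, item g) is a definition; the only nontrivial point is that the constants $c$ and $c_\alpha$ in \eqref{eq:27} must be uniform over all base points and over all normal charts centered at them --- a feature of bounded geometry that I would record by citing Triebel \cite{trib92}.
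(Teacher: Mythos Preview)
Your proposal is correct and matches the paper's own treatment: the paper states Proposition~\ref{prop_M} without proof, as a review of standard Riemannian geometry with references to Petersen \cite{pet16} and Triebel \cite{trib92} embedded directly in the statement. If anything, your sketch goes further than the paper by supplying the short bridging arguments (e.g.\ for item c) via Heine--Borel), whereas the paper simply records the facts and citations and moves on.
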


\begin{remark}
The definition  of ($C^\infty$)-bounded geometry is
given in~\cite[page 283]{trib92} and, under assumption~\eqref{eq:18},   it is
equivalent to assume that all  covariant derivatives of the Ricci
curvature tensor $R$ are bounded, see \cite[page 33]{CGT} and references
in~\cite[page 284]{trib92}. Furthermore,~\eqref{eq:27} is also equivalent to
assume that  for any multi-index $\alpha$ there exists a constant
$d_\alpha>0$ such that 
$\sup_{m\in B(m_0,r)} \{|D^{\alpha}\Gamma^i_{jk}(m)|\}\leq d_\alpha$ where
$d_\alpha$  are uniform for all systems of coordinates,
see~\cite[Proposition 2.4]{roe88}.  In \cite[Section 4.1]{fefupe16} there is a
weaker definition of manifold with bounded geometry, see
\cite[Lemma 2.6]{eldering13}. 
\end{remark}

\end{document}